\providecommand\@dotsep{5}
\def\listtodoname{List of Todos}
\def\listoftodos{\@starttoc{tdo}\listtodoname}
\numberwithin{equation}{section}
\newtheorem{Th}{Theorem}[section]
\newtheorem{Lem}[Th]{Lemma}
\newtheorem{lemma}[Th]{Lemma}
\newtheorem{Rem}[Th]{Remark}
\def\supp{\mathrm{supp}}
\def\R{\mathbb{R}}
\def\J{\mathcal{J}}
\def\RT{\mathbb{R}^3}
\def\n{\nabla}
\title[Nonlinear Schr\"odinger-Bopp-Podolsky system]{Nonlinear Schr\"odinger equation\\in the Bopp-Podolsky electrodynamics:\\solutions in the electrostatic case}
\author[P. d'Avenia]{Pietro d'Avenia}
\author[G. Siciliano]{Gaetano Siciliano}
\address[P. d'Avenia]{\newline\indent
	Dipartimento di Meccanica, Matematica e Management
	\newline\indent 
	Politecnico di Bari
	\newline\indent
	Via Orabona 4,  70125  Bari, Italy}
\email{\href{mailto:pietro.davenia@poliba.it}{pietro.davenia@poliba.it}}
\address[G. Siciliano]{\newline\indent
	Departamento de Matem\'atica - Instituto de Matem\'atica e Estat\'istica
	\newline\indent 
	Universidade de S\~ao Paulo
	\newline\indent
	Rua do Mat\~ao 1010,  05508-090  S\~ao Paulo, Brazil}
\email{\href{mailto:sicilian@ime.usp.br}{sicilian@ime.usp.br}}
\thanks{P. d'Avenia was partially supported by a grant of the group GNAMPA of INdAM and by FRA2016 of Politecnico di Bari. 
G. Siciliano was partially supported by Fapesp, CNPq and Capes, Brazil. This work was partially carried out during a stay of P. d'Avenia at the Universidade de S\~{a}o Paulo. He would like to express
his deep gratitude to the Instituto de Matem\'atica e Estat\'istica, Departamento de Matem\'atica, for the warm hospitality.
}
\subjclass[2010]{
35J48,   %Higher-order elliptic systems
35J50,   %Variational methods for elliptic systems
35Q60.  %PDEs in connection with optics and electromagnetic theory
}
\keywords{Elliptic systems, Schr\"odinger-Bopp-Podolsky equations, Variational Methods, Standing waves solutions.}
\begin{document}
	\begin{abstract} 
		We study the following nonlinear Schr\"odinger-Bopp-Podolsky system
		\[
\begin{cases}
-\Delta u + \omega u + q^{2}\phi u = |u|^{p-2}u\\
-\Delta \phi + a^2 \Delta^2 \phi = 4\pi  u^2
\end{cases}
\hbox{ in }\mathbb{R}^3
\]
with $a,\omega>0$.
We prove existence and nonexistence results depending on the parameters
$q,p$. Moreover we also show that, in the radial case, the solutions we find tend to solutions
of the classical Schr\"odinger-Poisson system as $a\to0$.
	\end{abstract}

\maketitle
\begin{center}
\begin{minipage}{12cm}
\tableofcontents
\end{minipage}
\end{center}

\section{Introduction}
In this paper we consider the system
\begin{equation}\label{eq:ScBP}
	\begin{cases}
		-\Delta u + \omega u + q^{2}\phi u = |u|^{p-2}u\\
		-\Delta \phi + a^2 \Delta^2 \phi = 4\pi  u^2
	\end{cases}
	\hbox{ in }\mathbb{R}^3
\end{equation}
where $u,\phi:\mathbb{R}^3 \to \R$, $\omega, a>0$, $q\neq 0$.

Such a system appears when we couple a Schr\"odinger field $\psi=\psi(t,x)$ with its electromagnetic field in the Bopp-Podolsky electromagnetic theory, and, in particular, in the electrostatic case for standing waves $\psi(t,x)=e^{i\omega t} u(x)$, see Section \ref{deduction} for more details.

The Bopp-Podolsky theory, developed by Bopp \cite{B40}, and independently by Podolsky \cite{Pob42}, is a second order gauge theory for the electromagnetic field. As the Mie theory \cite{Mie13} and its generalizations given by Born and Infeld \cite{Bnat,B,BInat,BI}, it was introduced to solve the so called {\em infinity problem} that appears in the classical Maxwell theory.
%{\color{red}which is related to divergences in the infrared sector of the quantum electrodynamics, as well to the problem of infinite vacuum polarization current}. 
In fact, by the well known Gauss law (or Poisson equation), the electrostatic potential $\phi$ for a given charge distribution whose density is $\rho$ satisfies the equation
\begin{equation}
	\label{Gauss}
	-\Delta\phi=\rho
	\qquad
	\hbox{in }\R^3.
\end{equation}
If $\rho=4\pi\delta_{x_0}$, with $x_0\in\R^3$, the fundamental solution of \eqref{Gauss} is $\mathcal{G}(x-x_0)$, where
\[
\mathcal{G}(x)=\frac{1}{|x|},
\]
and the electrostatic energy is
\[
\mathcal{E}_{\rm M}(\mathcal{G})
%= \frac{1}{2}\int_{\R^3} |{\bf E}|^2
=\frac{1}{2}\int_{\R^3} |\nabla \mathcal{G}|^2 =+\infty.
\]
Thus, equation \eqref{Gauss} is {\em replaced} by
\[
-\operatorname{div}\left(\frac{\nabla\phi}{\sqrt{1-|\nabla\phi|^2}}\right)=\rho
\qquad
\hbox{in }\R^3
\]
in the Born-Infeld theory and by
\[
-\Delta \phi + a^2 \Delta^2 \phi = \rho
\qquad
\hbox{in }\R^3
\]
in the Bopp-Podolsky one. In both cases, if $\rho=4\pi\delta_{x_0}$, we are able to write explicitly the solutions of the respective equations and to see that their energy is finite. In particular, when we consider the operator $-\Delta + a^2\Delta^2$, we have that $\mathcal{K}(x-x_0)$, with
\[
\mathcal{K}(x):=\frac{1-e^{-|x|/a}}{|x|},
\]
is the fundamental solution of the equation
\[
-\Delta \phi + a^2\Delta^2\phi= 4\pi\delta_{x_0},
\]
it has no singularity in $x_0$ since it satisfies
\[
\lim_{x\to x_0} \mathcal{K}(x-x_0)=\frac{1}{a},
\]
and its energy is
\[
\mathcal{E}_{\rm BP}(\mathcal{K})
%= \frac{1}{2}\int_{\R^3} |{\bf E}|^2
=\frac{1}{2}\int_{\R^3} |\nabla \mathcal{K}|^2 
+\frac{a^2}{2} \int_{\R^3} |\Delta \mathcal{K}|^2 
<+\infty
\]
(see Section \ref{secop} for more details).

Moreover the Bopp-Podolsky theory may be interpreted as an effective theory for short distances (see \cite{F96}) and for large distances it is experimentally indistinguishable from the Maxwell one. Thus the Bopp-Podolsky parameter $a>0$, which has dimension of the inverse of mass, can be interpreted as  a cut-off distance or can be linked to an effective radius for the electron.
For more physical details we refer the reader to the recent papers \cite{BPVZ,BPO,BPS2014,BPS2017,CDMPP,CDMNP} and to references therein. 

Finally we point out that the operator $-\Delta+\Delta^2$ appears also in other different interesting mathematical and physical situations (see \cite{BCMN,FIP} and their references).

Before stating our results, few preliminaries are in order.
We introduce here 
the space $\mathcal D$ as the completion of  $C^{\infty}_{c}(\mathbb R^{3})$ with respect to the norm~$\sqrt{\|\nabla \phi\|_{2}^2+a^2\|\Delta\phi\|_2^2}$.
We refer to Section \ref{sec:preliminaries} for more properties on this space.\\
Then, fixed $a>0$ and $q\neq 0$, we say that a pair $( u, \phi)\in H^{1}(\mathbb R^{3})\times \mathcal D$
is a solution of \eqref{eq:ScBP} if
\begin{align*}
\begin{array}{ll}
\displaystyle
\int_{\R^3} \nabla  u \nabla v +\omega\int_{\R^3}  u v 
+q^{2}\int_{\R^3} \phi  u v
=\int_{\R^3} | u|^{p-2}  u v 
&
\hbox{for all } v\in H^{1}(\mathbb R^{3}),\smallskip
\\
\displaystyle
\int_{\R^3} \nabla \phi \nabla \xi +a^{2}\int_{\R^3} \Delta \phi \Delta \xi = 4\pi \int_{\R^3} \phi u^{2}
&
\hbox{for all }  \xi \in \mathcal D.
\end{array}
\end{align*}
By standard arguments the solutions we find are easily seen to be positive.
Moreover we say that a solution $( u,\phi)$ is nontrivial whenever $ u \not\equiv 0$.

Then our results can be stated as follows.

\begin{Th}[Existence for small charges]\label{th:existence}
If $p\in(2,6)$ then there exists $q_{*}>0$ such that, for all
$q\in(-q_{*}, q_{*})\setminus\{0\}$, problem \eqref{eq:ScBP} admits a nontrivial solution.
\end{Th}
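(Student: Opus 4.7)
The plan is to reduce \eqref{eq:ScBP} to a single nonlocal equation for $u$ and then apply a mountain pass scheme in the radial subspace. First, for every $u\in H^1(\R^3)$ the second equation has a unique (nonnegative) solution $\phi_u\in\mathcal D$, given by convolution with the fundamental solution $\mathcal K$; the map $u\mapsto\phi_u$ is of class $C^1$, and H\"older together with Sobolev embedding yields the a priori bound $\int_{\R^3}\phi_u u^2 \leq C\|u\|_{H^1}^4$. Substituting gives the $C^1$ reduced functional
\[
I_q(u)=\frac{1}{2}\int_{\R^3}\bigl(|\nabla u|^2+\omega u^2\bigr)+\frac{q^2}{4}\int_{\R^3}\phi_u u^2-\frac{1}{p}\int_{\R^3}|u|^p,
\]
whose critical points $u$ correspond, via $\phi=\phi_u$, to solutions of \eqref{eq:ScBP}. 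Since $I_q$ is $\mathrm{O}(3)$-invariant, the principle of symmetric criticality lets me restrict the search to $H^1_r(\R^3)$, where the compact embedding $H^1_r\hookrightarrow L^s(\R^3)$ for $s\in(2,6)$ will provide the compactness needed later.

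Next I verify mountain pass geometry uniformly in $|q|\leq q_\ast$. Nonnegativity of $\phi_u$ together with $p>2$ yields $I_q(u)\geq \tfrac{1}{2}\|u\|_{H^1}^2-C\|u\|_{H^1}^p$, hence $I_q|_{\{\|u\|_{H^1}=\rho\}}\geq\alpha>0$ for some $\alpha,\rho$ independent of $q$. For the far point, fix any $v\in H^1_r\setminus\{0\}$ and choose $T>0$ large enough that $I_0(Tv)\leq -1$ (possible because $p>2$ forces $I_0(tv)\to-\infty$); then $I_q(Tv)=I_0(Tv)+\tfrac{q^2T^4}{4}\int\phi_v v^2<0$ for $|q|\leq q_\ast$ small. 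Evaluating along the straight segment $t\mapsto tTv$ also bounds the resulting mountain pass level $c_q\in[\alpha,C_0]$ with $C_0$ independent of $q$, so the mountain pass theorem provides a Cerami sequence $(u_n)\subset H^1_r$ at level $c_q$.

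The crux is the $H^1$-boundedness of $(u_n)$. Combining $I_q(u_n)=c_q+o(1)$ with the Cerami relation $I_q'(u_n)[u_n]=o(1)$ gives
\[
\bigl(\tfrac{p}{2}-1\bigr)\|u_n\|_{H^1}^2+q^2\bigl(\tfrac{p}{4}-1\bigr)\int_{\R^3}\phi_{u_n}u_n^2=pc_q+o(1),
\]
which for $p\in[4,6)$ already yields boundedness (Ambrosetti-Rabinowitz regime). For $p\in(2,4)$ the nonlocal coefficient has the wrong sign; here I would combine the identity with a Pohozaev-type identity for the reduced equation (the Bopp-Podolsky correction contributing only controlled lower-order terms) and with the a priori estimate $\int\phi_{u_n}u_n^2\lesssim\|u_n\|_{H^1}^4$ to arrive at an inequality $\|u_n\|_{H^1}^2\leq A+Bq^2\|u_n\|_{H^1}^4$ with $A,B$ independent of $q$. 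Taking $q_\ast$ small enough and using continuity along the minimax path, the sequence $(u_n)$ remains in the bounded connected component of this inequality. Once bounded, radial compactness provides $u_n\to u_\ast$ strongly in $L^s(\R^3)$ for $s\in(2,6)$, and continuity of $u\mapsto \phi_u u$ together with testing $I_q'(u_n)[u_n-u_\ast]\to 0$ upgrade this to strong convergence in $H^1$; hence $u_\ast$ is a critical point of $I_q$ with $I_q(u_\ast)=c_q\geq\alpha>0$, and symmetric criticality promotes it to a nontrivial solution of \eqref{eq:ScBP} in $H^1(\R^3)$.

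The principal obstacle is precisely this boundedness step for $p\in(2,4)$: there the quartic Coulomb-type term $\tfrac{q^2}{4}\int\phi_u u^2$ dominates the $L^p$ nonlinearity in homogeneity, breaking the Ambrosetti-Rabinowitz hierarchy, and the smallness of $q$ is genuinely needed to absorb the wrong-sign contribution through the quartic bound $\int\phi_u u^2\lesssim\|u\|_{H^1}^4$; this is exactly why the statement must restrict to small charges.
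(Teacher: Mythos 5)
Your overall architecture --- reduction to the $C^1$ functional $\J_q$, mountain pass geometry uniform in small $q$, restriction to $H^1_r(\R^3)$ plus symmetric criticality to recover compactness --- matches the paper's framework, and the radial compactness route is one the authors themselves endorse (Remark \ref{rem:Tbound}) as a substitute for their Splitting Lemma \ref{lemSplitting}. The genuine gap is in the boundedness of the (PS)/Cerami sequence when $p\in(2,4)$. First, you cannot apply a Poho\v zaev-type identity to the elements of a (PS) sequence: that identity is obtained by testing the equation against $x\cdot\nabla u_n$, which is not controlled by $\|\J_q'(u_n)\|_{H^{-1}}\to 0$; in the paper Poho\v zaev is only ever used for genuine solutions (Appendix \ref{PohozaevApp}, and the solutions $u_{\lambda_j}$ produced by the monotonicity trick in Section \ref{sec:allq}). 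Second, and more seriously, the inequality $\|u_n\|^2\le A+Bq^2\|u_n\|^4$ --- which in fact already follows from the combination $p\J_q(u_n)-\J_q'(u_n)[u_n]$ together with item (\ref{propphivii}) of Lemma \ref{lem:propphi}, with no Poho\v zaev input at all --- does not bound $\|u_n\|$: for small $q$ its solution set is of the form $[0,t_-]\cup[t_+,+\infty)$, and nothing in your argument excludes that $\|u_n\|$ sits on the unbounded branch. ``Continuity along the minimax path'' is not available for a sequence produced by Ekeland's variational principle; the $u_n$ need not lie on any admissible path, and $n\mapsto\|u_n\|$ carries no continuity.

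The device that closes exactly this gap in the paper is the Jeanjean--Le Coz truncation of Section \ref{sec4}: one replaces the nonlocal term by $K_T(u)\int\phi_u u^2$ with $K_T(u)=\chi(\|u\|^2/T^2)$, obtains a (PS) sequence for the truncated functional $\mathcal J_{q,T}$ at its mountain pass level, and proves (Lemma \ref{lem:Tbarra}) by a quantitative contradiction --- crucially fixing $\overline T$ first, independently of $q$, and only then choosing $q_*=q(\overline T)$ --- that $\limsup_n\|u_n\|\le\overline T$. Hence the truncation is never active, the sequence is a bounded (PS) sequence for $\J_q$ itself at the level $c_q>0$, and the proof concludes. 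Without this truncation (or an equivalent localization of the minimax inside a ball), your argument stalls precisely at the point you yourself flag as the principal obstacle.
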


The above existence result can be extended to any value of $q$ provided that
a restriction on $p$ is made.

\begin{Th}[Existence for arbitrary charges]\label{th:existence2}
If $p\in(3,6)$ then,  for all $q\neq0$,
problem \eqref{eq:ScBP} admits a nontrivial solution.
\end{Th}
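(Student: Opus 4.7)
The plan is to reduce \eqref{eq:ScBP} to a single equation for $u$ and run a mountain-pass argument on the radial subspace $H^1_{r}(\R^3)$. Given $u\in H^1(\R^3)$, Lax--Milgram on $\mathcal D$ yields a unique $\phi_u\in\mathcal D$, namely the convolution of $4\pi u^2$ with the fundamental solution $\K$; critical points of
\[
I(u) = \frac{1}{2}\int_{\R^3}(|\nabla u|^2 + \omega u^2) + \frac{q^2}{4}\int_{\R^3}\phi_u u^2 - \frac{1}{p}\int_{\R^3}|u|^p
\]
restricted to $H^1_{r}(\R^3)$ are, by symmetric criticality, radial solutions of the full system.

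The first step is to verify the mountain-pass geometry of $I$. Positivity on a small sphere is immediate, since $\int\phi_u u^2\ge 0$ and $p>2$. Producing a point of negative energy is subtle in the range $p\in(3,4)$, because the Coulomb-type term is quartic and nonnegative, so $I(tu)\to+\infty$ along any ray. I would handle this either by working on a Pohozaev--Nehari constraint in the spirit of Ruiz, or via a suitable two-parameter rescaling $u_{s,t}(x)=s\,u(x/t)$, using the estimate $\K(x)\le 1/|x|$ together with Hardy--Littlewood--Sobolev to control the Coulomb energy.

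The heart of the proof is to extract a bounded Palais--Smale sequence at the mountain-pass level. For $p\in(4,6)$ this is straightforward: the combination $I(u_n)-\tfrac{1}{4}I'(u_n)u_n=(\tfrac12-\tfrac14)\|u_n\|_{H^1}^2+(\tfrac14-\tfrac1p)\|u_n\|_p^p$ has both coefficients positive. For $p\in(3,4]$ the coefficient of the quartic Coulomb term flips sign and this trick fails; I would then embed $I$ in a one-parameter family $I_\lambda(u)=A(u)-\lambda B(u)$, with $A$ the positive Bopp--Podolsky--Schr\"odinger part and $B(u)=\tfrac1p\int|u|^p$, and apply Jeanjean's monotonicity theorem to obtain bounded PS sequences for $I_\lambda$ at the mountain-pass level $c_\lambda$ for a.e.\ $\lambda$ close to $1$. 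The compact embedding $H^1_{r}(\R^3)\hookrightarrow L^p(\R^3)$ then delivers nontrivial radial critical points $u_\lambda$ of $I_\lambda$.

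The final and, in my view, hardest step is to pass to the limit $\lambda\to 1^-$. A uniform bound on $\{u_\lambda\}$ rests on a Pohozaev-type identity for the full Bopp--Podolsky system, obtained by testing the first equation against $x\cdot\nabla u$ and the second against $x\cdot\nabla\phi$. Because $-\Delta+a^2\Delta^2$ does not scale homogeneously, the resulting identity involves both $\|\nabla\phi_u\|_2^2$ and $a^2\|\Delta\phi_u\|_2^2$, making it more delicate than in the classical Schr\"odinger--Poisson setting; combining it with $I_\lambda'(u_\lambda)u_\lambda=0$ and using $p>3$ should yield the needed coercivity. A bounded subsequence then converges weakly in $H^1_{r}$ and strongly in $L^p$ to a critical point $u$ of $I$; standard elliptic regularity and the maximum principle give $u>0$, completing the proof.
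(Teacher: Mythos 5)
Your proposal follows essentially the same route as the paper: reduction to the functional $\mathcal J_q$ on $H^1_r(\R^3)$, mountain-pass geometry obtained via a non-homogeneous rescaling controlled by $\mathcal{K}(x)\le |x|^{-1}$, the standard $\mathcal J_q-\tfrac1p\mathcal J_q'[u]$ bound for $p\in[4,6)$, Jeanjean's monotonicity trick combined with the Poho\v zaev identity to bound $u_\lambda$ uniformly for $p\in(3,4)$, and compactness from the compact radial embedding. The only trivial quibble is that at $p=4$ the Coulomb coefficient in $\mathcal J_q-\tfrac14\mathcal J_q'[u]$ vanishes rather than flips sign, so the elementary argument still covers $p=4$ and the monotonicity trick is only needed on the open interval $(3,4)$, exactly as in the paper.
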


%\begin{Rem}\label{RemNon}
%In the case $p\in(2,3]$ the fact that we have solutions just for small values of
%$q$ is corroborate by the existence of a $q_{**}>0$ such that 
%problem \eqref{eq:ScBP} admits only the trivial solution for $|q|>q_{**}$.
%\end{Rem}

%In the case $p\in(2,3]$ the fact that we have solutions just for small values of
%$q$ is corroborate by the next result.
%
%\begin{Th}[Nonexistence]\label{th:nonexistence}
%If $p\in(2,3)$ then  there exists a  $q_{**}>0$ such that 
%problem \eqref{eq:ScBP} admits only the trivial solution for $|q|>q_{**}$.
%\end{Th}

\medskip

To prove our existence results we use Variational Methods. Indeed
the solutions can be found as critical points of a smooth functional $\mathcal J_{q}$
defined on  $H^{1}(\mathbb R^{3})$. However %In order to use Critical Point Theory
we need to face with the following difficulties.
The geometry of the functional  strongly depends on the values of the parameters $p$ and $q$,
that may allow or prevents the existence of critical points. 
Moreover, a fundamental tool in Critical Point Theory is the following compactness condition: we say that the functional $\J_q$ satisfies the {\sl Palais-Smale} ((PS) for short) condition if any (PS) sequence $\{u_n\}\subset H^1(\R^3)$, that is a sequence satisfying
	\[
	\{\J_q(u_n)\} \hbox{ bounded and }  \J'_q(u_n)\to 0 \hbox{ in } H^{-1}(\R^3)\hbox{ as }  n\to +\infty,
	\]
	admits a convergent subsequence.\\
In particular, to find a bounded (PS) sequence in the case $p\in(2,3]$, a suitable truncation, introduced in \cite{JL} and already used successfully in recent papers as \cite{ADP,FS,Kik}, is performed. Once we have a bounded (PS) sequence, the invariance by translations of the problem clearly leads to a second difficulty:  the  lack of compactness. To overcome this problem  a useful Splitting Lemma (see Lemma \ref{lemSplitting})
is given.\\
Another difficulty which appears is due to the fact that the kernel
$\mathcal K$
is not homogeneous. This
make difficult the use of rescaling of type $t\mapsto u(t^{\alpha}\cdot)$ and
hence arguments as in \cite{RuizJFA} cannot be used.
However we can take some advantage from the fact that $\mathcal{K}\leq\mathcal{G}$.

%{\bf Case $p\in[4,6)$.}  For every $q,a \in \mathbb R\setminus\{0\}$ there exists a solution 
%$(\mathfrak u_{q,a}, \phi_{q,a})\in H^{1}(\mathbb R^{3})\times \mathcal D$
%of problem \eqref{eq:ScBP}. \smallskip
%
%{\bf Case $p\in(3,4)$.}  
%
%{\bf Case $p\in(2,3]$.}   Then there exists $q_{*}>0$ such that for every $q\in(-q_{*},q_{*})\setminus\{0\}$ and 
%$a\in\mathbb R\setminus\{0\}$
%there exists a solution $(\mathfrak u_{q,a}, \phi_{q,a})$ of problem \eqref{eq:ScBP}.

Let us observe that, due to the invariance of $\J_{q}$ under the group induced by the action of rotations on $H^1(\R^3)$, we can restrict ourselves
to $H_r^1(\R^3)$, the subspace of radial functions, which is a natural constraint: if $u\in H_r^1(\R^3)$ is a critical point of $\J_q|_{H_r^1(\R^3)}$, then it is a critical point for the functional on the entire $H^1(\R^3)$. 
Then
the same results as in Theorem \ref{th:existence} and Theorem \ref{th:existence2} hold in the radial setting
(with even a simpler argument in order to recover compactness).
Actually in this case we can say even more:
the solutions found tend to solutions of the Schr\"odinger-Poisson system
\begin{equation}\label{eq:SP}
\begin{cases}
-\Delta u + \omega u +  q^{2}\phi u = |u|^{p-2}u\\
-\Delta \phi   = 4\pi  u^2
\end{cases}
\hbox{ in }\mathbb{R}^3,
\end{equation}
obtained formally by \eqref{eq:ScBP} by setting $a=0$. Indeed we have the following
\begin{Th}\label{th:ato0}
For $ q\neq 0$ fixed according to the restriction in the  Theorems \ref{th:existence} and \ref{th:existence2}, let $(\mathfrak u^{a}, \phi^{a})\in H^{1}_{r}(\mathbb R^{3})\times \mathcal D_{r}$ be solutions of \eqref{eq:ScBP}. Then
\begin{equation*}%\label{eq:convergenzaina}
\mathfrak u^{a}\to \mathfrak u^{0}\text{ in } H^{1}_{r}(\mathbb R^{3}) \quad
\text{ and }\quad \phi^{a}\to \phi^{0} \text{ in } D_{r}^{1,2}(\mathbb R^{3})
\qquad \text{as } a\to0,
\end{equation*}
where $(\mathfrak u^{0},\phi^{0})\in H^{1}_{r}(\mathbb R^{3})\times D^{1,2}_{r}(\mathbb R^{3})$
is a solution of \eqref{eq:SP}.
\end{Th}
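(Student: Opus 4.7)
The plan is to argue that uniform $a$-independent bounds on the family $\{(\mathfrak u^a,\phi^a)\}$ force compactness along subsequences, that every accumulation point must solve \eqref{eq:SP}, and finally to upgrade the weak convergence to strong convergence by testing the two systems against the differences. The structural ingredient driving the whole argument is the pointwise domination $\mathcal K \leq \mathcal G$ of the Bopp--Podolsky kernel by the Newtonian one, which makes the comparison with the Schr\"odinger--Poisson problem quantitative.

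First I would establish uniform $H^1_r$-bounds on $\mathfrak u^a$. Setting $\Phi^a(u)=\mathcal K * 4\pi u^{2}$ and reducing, the Bopp--Podolsky functional $\J^{a}_{q}(u) = \frac12\|u\|_{H^1}^{2} + \frac{q^2}{4}\int_{\R^3}\Phi^a(u)u^2 - \frac1p \|u\|_p^p$ satisfies $\J_q^a(u)\leq \J_q^0(u)$ pointwise for every $u$, thanks to $\mathcal K\leq \mathcal G$, where $\J_q^0$ is the Schr\"odinger--Poisson functional. A fixed radial path realizing the mountain-pass geometry of $\J_q^0$ therefore serves as a uniform comparison path for $\J_q^a$, giving an upper bound $c_a\leq C$ on the critical level (for $p\in(2,3]$ one works with the truncated functional of \cite{JL}, choosing truncation parameters independent of $a$). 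Combining $\J_q^a(\mathfrak u^a)=c_a$ with $(\J_q^a)'(\mathfrak u^a)=0$ in the usual linear combination yields $\|\mathfrak u^a\|_{H^1}\leq C$; testing the second equation of \eqref{eq:ScBP} with $\phi^a$ then gives $\|\nabla \phi^a\|_2^2+a^2\|\Delta \phi^a\|_2^2\leq C$ uniformly.

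After passing to a subsequence, $\mathfrak u^a\rightharpoonup \mathfrak u^0$ weakly in $H^1_r(\R^3)$, strongly in $L^s(\R^3)$ for $s\in(2,6)$, and a.e., while $\phi^a\rightharpoonup \phi^0$ weakly in $D^{1,2}_r(\R^3)$. To identify $\phi^0$ I pass to the limit in the weak form of the second equation against $\xi\in C_c^\infty(\R^3)$: the first term passes by weak convergence, the biharmonic contribution is controlled by $a^2\int \Delta\phi^a\,\Delta\xi \leq a\,(a\|\Delta\phi^a\|_2)\|\Delta\xi\|_2\to 0$, and the right-hand side converges by strong $L^s$-convergence of $(\mathfrak u^a)^2$, hence $-\Delta\phi^0=4\pi(\mathfrak u^0)^2$. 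Passing to the limit in the first equation against $v\in H^1_r(\R^3)$ (the nonlocal cross-term converges via weak $L^6$-convergence of $\phi^a$ together with strong $L^{12/5}$-convergence of $\mathfrak u^a v$) shows $(\mathfrak u^0,\phi^0)$ is a weak solution of \eqref{eq:SP}. To upgrade to strong convergence, testing the first equations of \eqref{eq:ScBP} and of \eqref{eq:SP} with $\mathfrak u^a-\mathfrak u^0$ and subtracting gives $\|\mathfrak u^a-\mathfrak u^0\|_{H^1}^{2}=o(1)$, since the $L^p$ and nonlocal contributions vanish by the preceding convergences. For the potential, testing the difference of the two second equations with $\phi^a-\phi^0$ yields
\[
\|\nabla(\phi^a-\phi^0)\|_2^2 + a^2\|\Delta\phi^a\|_2^2 = 4\pi\int_{\R^3}\bigl((\mathfrak u^a)^2-(\mathfrak u^0)^2\bigr)(\phi^a-\phi^0) + a^2\int_{\R^3}\Delta\phi^a\,\Delta\phi^0,
\]
and since $\phi^0\in H^2(\R^3)$ (because $(\mathfrak u^0)^2\in L^2$), Cauchy--Schwarz bounds the last term by $a\,(a\|\Delta\phi^a\|_2)\|\Delta\phi^0\|_2=O(a)$, while the first integral on the right vanishes by the previous step; discarding the nonnegative $a^2\|\Delta\phi^a\|_2^2$ gives $\|\nabla(\phi^a-\phi^0)\|_2\to 0$.

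The main obstacle will be the $a$-uniform bound on $\mathfrak u^a$: the geometry and Palais--Smale analysis of $\J_q^a$ genuinely depend on $a$, and for $p\in(2,3]$ the truncation of \cite{JL} has to be performed with constants that do not degenerate as $a\to 0$. The domination $\mathcal K\leq \mathcal G$ is precisely what makes those constants $a$-independent, and it reappears as the key tool whenever a potential must be controlled uniformly in $a$.
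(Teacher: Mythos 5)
Your overall strategy coincides with the paper's: an $a$-independent bound on $\|\mathfrak u^a\|$ obtained by comparing the Bopp--Podolsky mountain-pass level with the Schr\"odinger--Poisson one via $\mathcal K\leq\mathcal G$, followed by compact radial embeddings, identification of the limit equation, and an upgrade to strong convergence. Your route to $\phi^a\to\phi^0$ in $D^{1,2}_r(\R^3)$ (testing the difference of the two second equations against $\phi^a-\phi^0$, using that $\Delta\phi^0\in L^2$ so that $\phi^0\in\mathcal D$ is admissible) is a legitimate substitute for the paper's Lemma \ref{lem:ato0}, which instead exploits that $\phi^a$ minimizes the energy $E_a$ and runs a liminf/limsup argument; likewise your direct testing for $\mathfrak u^a\to\mathfrak u^0$ replaces the Riesz-isomorphism argument of Lemma \ref{prop:PS}. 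These are minor variants and both work.

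The genuine gap is the uniform bound for $p\in(3,4)$. The ``usual linear combination''
\[
p\,\J_q^a(\mathfrak u^a)-(\J_q^a)'(\mathfrak u^a)[\mathfrak u^a]
=\frac{p-2}{2}\|\mathfrak u^a\|^2+q^2\left(\frac p4-1\right)\int \phi^a(\mathfrak u^a)^2
\]
carries a \emph{negative} coefficient on the nonlocal term when $p<4$, and that term is quartic in $\|\mathfrak u^a\|$, so this identity alone does not bound $\|\mathfrak u^a\|$ --- the classical obstruction in Schr\"odinger--Poisson theory for $p<4$. You correctly flag the truncation issue for $p\in(2,3]$ (which the paper settles by noting that $\overline T$ and $q_*$ from Lemma \ref{lem:Tbarra} are $a$-independent, Remarks \ref{rem:a} and \ref{rem:Tbound}), but you leave $p\in(3,4)$ to the failed linear combination. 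The paper's Case B closes this range by combining the Nehari identity with the Poho\v zaev identity: as in \eqref{eq:paperino} one obtains $(p-3)\|\nabla\mathfrak u^a\|_2^2+\frac{p-2}{2}\omega\|\mathfrak u^a\|_2^2\leq\frac{5p-12}{2}c_q^0$, where the $\|\Delta\phi^a\|_2^2$ contribution can be discarded precisely because its coefficient $\frac{q^2a^2}{4\pi}\left(\frac p4-1\right)$ is negative for $p<4$. Without an identity of this type your uniform bound is not established on $(3,4)$, and the rest of the argument does not go through there.
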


Let us recall that $D^{1,2}(\mathbb R^{3})$ is the usual Sobolev space
defined as the completion of  $C^{\infty}_{c}(\mathbb R^{3})$ with respect to the norm $\|\nabla \phi\|_{2}$ and $D^{1,2}_{r}(\mathbb R^{3}), \mathcal D_{r}$ are the respective subspaces of radial functions.

We point out that there is a wide literature on the coupling of matter with its own electromagnetic field for many different situations. See e.g. \cite{ADP,DPS} for the Maxwell theory, \cite{APS,BDP,DP,FS} for the Born-Infeld one, \cite{CDPS} for the Chern-Simon one, and references therein.
However, to the best of our knowledge, this is the first time that a system like \eqref{eq:ScBP},
which involves the Bopp-Podolsky electromagnetic theory,
appears in the mathematical literature.

The paper is organised as follows.\\
In Section  \ref{deduction} the Schr\"odinger-Bopp-Podolsky system is obtained in the
framework of the Abelian Gauge Theories.  Then the antsaz of stationary solutions in the electrostatic case
is done and \eqref{eq:ScBP} is deduced.\\
In Section \ref{sec:preliminaries} we give general preliminaries in order to attack our problem.
In particular we first define the right spaces in which find the solutions.
Then we show rigorously that $\mathcal K$ 
is the fundamental solution of the operator $-\Delta+a^{2}\Delta^{2}$. Probably this result is known,
but we were not able to find it in the literature. In particular Lemma
\ref{lemfundsol} is interesting of its right.
Moreover the smooth energy functional $\mathcal J_{q}$ is 
defined in such a way that its critical points are exactly
solutions of \eqref{eq:ScBP} and its geometric properties  are proved.\\
In Section \ref{sec4} we prove Theorem \ref{th:existence}. Indeed the hypothesis of small 
charges allows to have the Mountain Pass Geometry for any value of $p\in(2,6)$. 
Here the boundedness of the (PS) sequences is obtained by means of the key Proposition
\ref{lem:Tbarra}. On the other hand
the compactness is recovered by means of the Splitting Lemma \ref{lemSplitting}.\\
Section \ref{sec:allq} is devoted to the proof of Theorem \ref{th:existence2}. In this case, that is for $p\in(3,6)$,
the Mountain Pass Geometry holds for every $q$.  However the boundedness of the (PS)
sequences is obtained in two different way: it is {\em standard} if $p\in[4,6)$ while it is obtained by means
of the monotonicity trick (see \cite{JJ, St}) if $p\in(3,4)$. Even now the compactness 
(and then the existence of a solution) can be recovered by using the Splitting Lemma; nevertheless,
just to use a different (and simpler) argument, we prefer to give the proof in the radial setting.\\
In Section \ref{sec:Limite} we study the behaviour of the radial solutions with respect to $a$.
After proving Lemma \ref{lem:ato0} which may be of some interest in other contexts,
we prove Theorem \ref{th:ato0}.\\
In Appendix \ref{app:A} we collect few facts concerning the regularity of our solutions,
we prove the Poho\v zaev identity and give the proof of some nonexistence results in
%cluding
%Remark \ref{RemNon} and 
the cases $p\geq 6$ and $p\leq 2$. In particular, to achieve our goal, in the case $p\leq 2$ we use an interesting identity, obtained in Section \ref{id}, which could be useful for further developments.\\
Finally in Appendix \ref{appsplitt} we give the proof of the Splitting Lemma \ref{lemSplitting}.

\subsection*{Notations}
As a matter of notations through the paper 
\begin{itemize}
\item we denote with $\|\cdot\|_{p}$ the usual $L^{p}(\R^3)$ norm;
\item $\langle \cdot, \cdot \rangle $ is the scalar product in $H^{1}(\mathbb R^{3})$
which gives rise to the  norm $\|\cdot\|=\sqrt{\|\nabla\cdot\|_2^2+\omega\|\cdot\|_2^2}$;
\item $p'=p/(p-1)$ is the conjugate exponent of $p$;
\item we use the symbol $o_{n}(1)$ for a vanishing sequence in the specified space;
\item we use $C,C_{1},C_{2},\ldots$ to denote suitable positive constants whose value may also change from line to line;
\item if not specified, the domain of the integrals is $\R^3$.
\end{itemize}
Other notations will be introduced whenever we need. Moreover, for simplicity, from now on we will consider positive $q$'s.

%\begin{itemize}
%\item[1.] Let $p\in(4,6)$. Let $\overline q>0$ be fixed and $(\mathfrak u^{a}_{\overline q}, \phi^{a}_{\overline q})$
%the solution given in Theorem \ref{th:existence}. Then as $a\to 0$
%$$\mathfrak u_{\overline q}^{a}\to \mathfrak u_{\overline q}^{0} \text{ in } H^{1}_{r}(\mathbb R^{3}) \quad
%\text{ and }\quad \phi_{\overline q}^{a}\to \phi_{\overline q}^{0} \text{ in } D^{1,2}(\mathbb R^{3}).$$
%\item[2.]
%\item[3.] Let $p\in(2,3)$. Let $\overline q\in(0,q_{*})$ be fixed and $(\mathfrak u^{a}_{\overline q}, \phi^{a}_{\overline q})$
%the solution given in Theorem \ref{th:existence}. Then as $a\to 0$
%$$\mathfrak u_{\overline q}^{a}\to \mathfrak u_{\overline q}^{0} \text{ in } H^{1}_{r}(\mathbb R^{3}) \quad
%\text{ and }\quad \phi_{\overline q}^{a}\to \phi_{\overline q}^{0} \text{ in } D^{1,2}(\mathbb R^{3}).$$
%\end{itemize}

\section{Deduction of the Schr\"odinger-Bopp-Podolsky system}\label{deduction}

Let us consider the nonlinear Schr\"odinger Lagrangian density
\[
\mathcal{L}_{\rm Sc}
=i\hbar \bar{\psi} \partial_t \psi
-\frac{\hbar^2}{2m} |\nabla\psi|^2 + \frac{2}{p} |\psi|^p,
\]
where $\psi:\R\times\R^3\to\mathbb{C}$, $\hbar,m,p>0$, and let $(\phi,{\bf A})$ be the gauge potential of the electromagnetic field $({\bf E},{\bf H})$, namely $\phi:\R^3\to\R$ and ${\bf A}:\R^3\to\R^3$ satisfy
\[
\mathbf{E}=-\nabla\phi-\frac{1}{c}\partial_t\mathbf{A},
\qquad
\mathbf{H}=\nabla\times\mathbf{A}.
\]
The coupling of the field $\psi$ with the electromagnetic field $({\bf E},{\bf H})$ through the minimal coupling rule, namely the study of the interaction between $\psi$ and its own electromagnetic field, can be obtained replacing in $\mathcal{L}_{\rm Sc}$ the derivatives $\partial_t$ and $\nabla$ respectively with the covariant ones 
$$ 
D_{t}= \partial_{t}+\frac{iq}{\hbar} \phi,
\qquad
\mathbf D=\nabla -\frac{iq}{\hbar c}  \mathbf A,
$$
$q$ being a {\em coupling} constant.
This leads to consider
\begin{align*}
\mathcal{L}_{\rm CSc}
&=
i \hbar \overline \psi  D_{t}\psi -\frac{\hbar^{2}}{2m} |\mathbf D \psi|^{2}+\frac{2}{p} |\psi|^p\\
&=
i\hbar \overline{\psi} \left(\partial_t +\frac{iq}{\hbar} \phi\right) \psi 
-\frac{\hbar^2}{2m} \left|\left(\nabla- \frac{iq}{\hbar c}{\bf A}\right)\psi\right|^2 + \frac{2}{p} |\psi|^p.
%\\
%&{\color{blue}=
%i\hbar \bar{\psi} \left(\partial_t \psi +\frac{iq}{\hbar} \phi\psi \right) 
%-\frac{\hbar^2}{2m} \left|\nabla\psi - \frac{iq}{\hbar c}{\bf A}\psi \right|^2 + \frac{2}{p} |\psi|^p}
\end{align*}
Now, to get the total Lagrangian density, we have to add to $\mathcal{L}_{\rm CSc}$ the Lagrangian density of the electromagnetic field.\\
The Bopp-Podolsky Lagrangian density (see \cite[Formula (3.9)]{Pob42}) is
\begin{align*}
\mathcal{L}_{\rm BP}
&=
\frac{1}{8\pi}\left\{
|\mathbf{E}|^2 - |\mathbf{H}|^2 + a^2 \left[ (\operatorname{div} \mathbf{E})^2 - \left|\nabla\times\mathbf{H} - \frac{1}{c} \partial_t\mathbf{E}\right|^2\right]
\right\}\\
&=
\frac{1}{8\pi}\left\{
|\nabla\phi+\frac{1}{c}\partial_t\mathbf{A}|^2 - |\nabla\times\mathbf{A}|^2
\right.\\
&\qquad\qquad
\left.+ a^2 \left[ \left(\Delta \phi+\frac{1}{c}\operatorname{div} \partial_t \mathbf{A}\right)^2 - \left|\nabla\times\nabla\times\mathbf{A} + \frac{1}{c} \partial_t(\nabla\phi+\frac{1}{c}\partial_t\mathbf{A})\right|^2\right]
\right\}.
\end{align*}
Thus the total action is
\[
\mathcal{S}(\psi,\phi,\mathbf{A})=\int \mathcal{L} dxdt
\]
where $\mathcal{L}:=\mathcal{L}_{\rm CSc} + \mathcal{L}_{\rm BP}$ is the total Lagrangian density.\\
The Euler-Lagrange equations of $\mathcal{S}$ are given by
\[
\begin{cases}
\displaystyle
i\hbar \left(\partial_t +\frac{iq}{\hbar} \phi \right)\psi
+\frac{\hbar^2}{2m} \left(\n- \frac{iq}{\hbar c}{\bf A}\right)^2\psi
+  |\psi|^{p-2}\psi 
=0 \smallskip\\
\displaystyle
-\operatorname{div}\left(\nabla\phi+\frac{1}{c}\partial_{t}\mathbf{A}\right)  
+ a^2
\left[ \Delta\left(\Delta \phi+\frac{1}{c}\operatorname{div} \partial_{t}\mathbf{A}\right)
- \frac{1}{c}\partial_t\operatorname{div}\left(\nabla\times\nabla\times\mathbf{A} + \frac{1}{c} \partial_t(\nabla\phi+\frac{1}{c}\partial_{t}\mathbf{A})\right) \right]\\
\qquad\qquad
=4\pi q |\psi|^2  \\
\displaystyle
-\frac{\hbar q}{mc}\Im \left[\left(\nabla\bar{\psi}  + \frac{iq}{\hbar c}{\bf A}\bar{\psi}  \right) \psi \right]
%-\frac{i\hbar q}{2mc} \left(\nabla\psi - \frac{iq}{\hbar c}{\bf A} \psi \right) \bar{\psi}
-\frac{1}{4\pi}\left\{
\frac{1}{c} \partial_t (\nabla\phi+\frac{1}{c}\partial_t{\bf A} )  
+ \nabla\times \nabla\times{\bf A}  \right\}\smallskip\\
\displaystyle
\qquad \qquad
+ \frac{a^2}{4\pi} \left[ \frac{1}{c} \nabla \partial_t\left(\Delta \phi+\frac{1}{c}\operatorname{div} \partial_t {\bf A} \right)
- \nabla\times\nabla\times\nabla\times\nabla\times{\bf A} 
- \frac{1}{c^2} \partial_{tt}\nabla\times\nabla\times{\bf A}
\right.\\
\displaystyle
\qquad \qquad
\left.
- \frac{1}{c} \nabla\times\nabla\times \partial_t(\nabla\phi+\frac{1}{c}\partial_t{\bf A} )
- \frac{1}{c^3} \partial_{ttt}(\nabla\phi+\frac{1}{c}\partial_t{\bf A} )
\right]=0.
\end{cases}
\]
If we consider $\psi(t,x)=e^{iS(t,x)}u(t,x)$  with $S,u:\R\times\R^3\to\R$ the Euler-Lagrange equations are
	\[
	\begin{cases}
	\displaystyle
	-\frac{\hbar^2}{2m} \Delta u
	+\left[ \frac{\hbar^2}{2m} \left| \nabla S - \frac{q}{\hbar c}{\bf A} \right|^2  + \hbar \partial_{t}S + q \phi \right]u
	= |u|^{p-2}u\smallskip\\ 
	\displaystyle \partial_{t}   u^2 
	+\frac{\hbar}{m} \operatorname{div}\left[\left(\nabla S - \frac{q}{\hbar c}{\bf A} \right) u^2 \right]=0\smallskip\\
	\displaystyle
	-\operatorname{div}\left(\nabla\phi+\frac{1}{c}\partial_{t}\mathbf{A}\right)  
	+ a^2
	\left[ \Delta\left(\Delta \phi+\frac{1}{c}\operatorname{div} \partial_{t}\mathbf{A}\right)
	- \frac{1}{c}\partial_t\operatorname{div}\left(\nabla\times\nabla\times\mathbf{A} + \frac{1}{c} \partial_t(\nabla\phi+\frac{1}{c}\partial_{t}\mathbf{A})\right) \right]\smallskip\\
	\qquad \qquad
	=4\pi q |u|^2  \\
	\displaystyle
	\frac{\hbar q}{mc} \left(\nabla S - \frac{q}{\hbar c}{\bf A}\right)u^2
	-\frac{1}{4\pi}\left\{
	\frac{1}{c} \partial_t (\nabla\phi+\frac{1}{c}\partial_t{\bf A} )  
	+ \nabla\times \nabla\times{\bf A}  \right\}\smallskip\\
	\displaystyle
	\qquad \qquad
	+ \frac{a^2}{4\pi} \left[ \frac{1}{c} \nabla \partial_t\left(\Delta \phi+\frac{1}{c}\operatorname{div} \partial_t {\bf A} \right)
	- \nabla\times\nabla\times\nabla\times\nabla\times{\bf A} 
	- \frac{1}{c^2} \partial_{tt}\nabla\times\nabla\times{\bf A}
	\right.\smallskip\\
	\displaystyle
	\qquad \qquad
	\left.
	- \frac{1}{c} \nabla\times\nabla\times \partial_t(\nabla\phi+\frac{1}{c}\partial_t{\bf A} )
	- \frac{1}{c^3} \partial_{ttt}(\nabla\phi+\frac{1}{c}\partial_t{\bf A} )
	\right]=0.
	\end{cases}
	\]
Finally, if we consider standing waves $\psi(t,x) = e^{i\omega t/\hbar} u(x)$ in the purely electrostatic case ($\phi=\phi(x)$ and ${\bf A}={\bf 0}$), the  second and fourth equation are satisfied and we get
\begin{equation}
\label{SBPvero}
\tag{$\mathcal{SBP}$}
\begin{cases}
\displaystyle
-\frac{\hbar^2}{2m} \Delta u + \omega u + q \phi u
= |u|^{p-2}u\\ 
-\Delta\phi
+ a^2 \Delta^2 \phi
=4\pi q u^2 . 
\end{cases}
\end{equation}
{\em Normalising} the constants $\hbar$ and $m$
and renaming the unknown $\phi$ it is easy to see that solutions of \eqref{eq:ScBP} give rise to solutions of \eqref{SBPvero}.
Hence from now on we  will refer to system \eqref{eq:ScBP}.

\section{Preliminaries}\label{sec:preliminaries}
In this section we give some preliminary results that will be useful for our arguments. In particular we give some fundamental properties on the operator $-\Delta+a^2\Delta^2$. Then we introduce the functional whose critical points are weak solutions of our problem and we conclude the section showing that, at least for small $q$'s, such a functional satisfies the geometrical assumptions of the Mountain Pass Theorem.

\subsection{The operator $-\Delta+a^2\Delta^2$}\label{secop}

Let $\mathcal{D}$ be the completion of $C_c^\infty(\R^3)$ with respect to the norm 
$\|\cdot\|_{\mathcal D}$ induced by the scalar product
\[
\langle \varphi ,\psi \rangle_{\mathcal D} := \int \nabla \varphi \nabla \psi + a^2 \int \Delta \varphi \Delta \psi.
\]
Then $\mathcal{D}$ is an Hilbert space continuously embedded into $D^{1,2}(\RT)$ and 
consequently  in $L^6(\RT)$.\\
It is interesting to note also the following result.
%\todo[inline]{Forse non serve.}
\begin{Lem}\label{lem31}
The space $\mathcal{D}$ is continuously embedded in $L^\infty(\R^3)$.
\end{Lem}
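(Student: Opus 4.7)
The plan is to prove the pointwise estimate $\|\phi\|_\infty \le C_a\|\phi\|_{\mathcal D}$ on the dense subspace $C_c^\infty(\R^3)$ via the Fourier transform, and then extend by density using the embedding $\mathcal D\hookrightarrow L^6(\R^3)$ already observed in this section. For $\phi\in C_c^\infty(\R^3)$, Plancherel's identity writes
\[
\|\phi\|_{\mathcal D}^2 = \int_{\R^3}\bigl(|\xi|^2+a^2|\xi|^4\bigr)|\widehat{\phi}(\xi)|^2\,d\xi,
\]
while Fourier inversion yields $\|\phi\|_\infty \le (2\pi)^{-3/2}\|\widehat{\phi}\|_1$. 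Thus the task reduces to controlling $\|\widehat{\phi}\|_1$ in terms of $\|\phi\|_{\mathcal D}$.

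By Cauchy--Schwarz,
\[
\|\widehat{\phi}\|_1 \le \|\phi\|_{\mathcal D}\cdot\biggl(\int_{\R^3}\frac{d\xi}{|\xi|^2+a^2|\xi|^4}\biggr)^{1/2},
\]
so the key step is to verify that the last integral is finite. Passing to spherical coordinates, it reduces up to a constant to $\int_0^{+\infty}\frac{dr}{1+a^2 r^2}<+\infty$: the $|\xi|^2$ summand kills the singularity at the origin (the three-dimensional volume element $r^2\,dr$ exactly cancels the $1/r^2$ factor), while the $a^2|\xi|^4$ summand provides integrable $r^{-2}$ decay at infinity. This yields $\|\phi\|_\infty \le C_a\|\phi\|_{\mathcal D}$ on $C_c^\infty(\R^3)$; the density extension is then routine: given $\phi\in\mathcal D$ and $\phi_n\in C_c^\infty(\R^3)$ with $\phi_n\to\phi$ in $\mathcal D$, the estimate makes $\{\phi_n\}$ Cauchy in $L^\infty$, hence uniformly convergent to some $\phi^*$, and the continuous inclusion $\mathcal D\hookrightarrow L^6(\R^3)$ together with a.e.\ convergence along a subsequence forces $\phi=\phi^*$ a.e.

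The only real obstacle is the integrability of the symbol reciprocal $(|\xi|^2+a^2|\xi|^4)^{-1}$, and it is crucially dimension-specific: $\int d\xi/|\xi|^2$ alone would diverge at infinity in $\R^3$, while $\int d\xi/|\xi|^4$ alone would diverge at the origin. The combined operator $-\Delta + a^2\Delta^2$ is precisely what cures both the low- and high-frequency divergences simultaneously in dimension three, which is the analytic content of the embedding.
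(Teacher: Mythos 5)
Your proof is correct, but it follows a genuinely different route from the paper's. You argue on the Fourier side: Plancherel converts $\|\phi\|_{\mathcal D}^2$ into $\int(|\xi|^2+a^2|\xi|^4)|\widehat{\phi}|^2\,d\xi$, and Cauchy--Schwarz against the reciprocal symbol reduces everything to the finiteness of $\int_{\R^3}\frac{d\xi}{|\xi|^2+a^2|\xi|^4}$, which in dimension three is exactly $\frac{2\pi^2}{a}$; the density step via the $L^6$ embedding is handled correctly. The paper instead runs a real-variable, local Morrey-type argument in the spirit of Brezis' Theorem~9.12: for $\varphi\in C_c^\infty(\R^3)$ and a unit cube $Q\ni x$ one has $|\varphi(x)|\leq|\overline{\varphi}|+C\|\nabla\varphi\|_{L^6(Q)}\leq C\|\varphi\|_{W^{1,6}(\R^3)}$, and then the Sobolev inequality applied to $\varphi$ and to $\nabla\varphi$, together with the identity $\sum_{i,j}\int|\partial_{ij}\varphi|^2=\int|\Delta\varphi|^2$, controls the $W^{1,6}$ norm by $\|\nabla\varphi\|_2+\|\Delta\varphi\|_2$. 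Your approach buys an explicit constant of order $a^{-1/2}$ (making transparent why the embedding must degenerate as $a\to0$) and the bonus that $\widehat{\phi}\in L^1$, so elements of $\mathcal D$ are a.e.\ equal to continuous functions vanishing at infinity; the paper's approach avoids the Fourier transform entirely and is the kind of local argument that transfers to settings (domains, variable coefficients) where an exact symbol calculus is unavailable. Both are complete proofs.
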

\begin{proof}
Let $\varphi\in C_c^\infty (\R^3)$, $x\in\R^3$, and $Q$ be a unitary cube containing $x$. Arguing as in Brezis \cite[Proof of Theorem 9.12]{Brezis}, using the Sobolev inequality applied to $\varphi$ and to $\nabla\varphi$, and since
\[
\sum_{i,j}\int \partial_{ij} \varphi \ \partial_{ij} \varphi 
%= - \sum_{i,j}\int \partial_{jij}\phi \ \partial_{i} \phi
%= \sum_{i,j}\int \partial_{jj}\phi \ \partial_{ii} \phi 
= \int \Delta \varphi\  \Delta \varphi,
\]
we have
\[
|\varphi(x)|
\leq |\overline{\varphi}|+C \|\nabla \varphi\|_{L^6(Q)}
\leq C \|\varphi\|_{W^{1,6}(Q)}
\leq C \|\varphi\|_{W^{1,6}(\R^3)}
\leq C (\|\nabla \varphi \|_2 + \|\Delta \varphi\|_2)
\leq C \|\varphi\|_{\mathcal{D}}.
\]
%\begin{align*}
%\|\nabla\varphi\|_6^6
%&=
%\int ((\partial_1 \varphi)^2 + (\partial_2 \varphi)^2+ (\partial_3 \varphi)^2)^3\\
%\|\partial_1 \varphi\|_6^6
%&\leq
%C \left( \int |\nabla \partial_1 \varphi |^2\right)^3
%\ldots
%C \|H\varphi\|_2^6
%\\
%\int (\partial_1 \varphi)^4 (\partial_2 \varphi)^2
%& \leq
%C \|\partial_1 \varphi\|_6^4 \|\partial_2 \varphi\|_6^2
%\end{align*}
%Let $\phi\in C_{c}^{\infty}(\mathbb R^{N})$: then
%$$
%|H_{\phi} |_{2}^{2} : =\sum_{i,j} |\partial_{ij} \phi |_{2}^{2} = |\Delta \phi|_{2}^{2}.
%$$
%To see this note that
%\begin{eqnarray}\label{eq:}
%\sum_{i,j}\int \partial_{ij} \phi \ \partial_{ij} \phi = - \sum_{i,j}\int \partial_{jij}\phi \ \partial_{i} \phi
%= \sum_{i,j}\int \partial_{jj}\phi \ \partial_{ii} \phi = \int \Delta \phi\  \Delta \phi
%\end{eqnarray}
Here $\overline{\varphi}$ is the mean of $\varphi$ on $Q$ and $C$'s do not depend on $Q$ and $\varphi$.
% but only on the side of $Q$. 
%Thus, if we consider a family of cubes with measure $1$ that covers $\R^3$ we can conclude.
Therefore, standard density arguments allow to conclude.
\end{proof}

The next Lemma gives a useful characterization of the space $\mathcal D$.
\begin{lemma}
The space $C^{\infty}_{c}(\mathbb R^{3})$ is dense in
$$\mathcal A := \left\{\phi \in D^{1,2}(\mathbb R^{3}) : \Delta \phi \in L^{2}(\mathbb R^{3})\right\}$$
normed by $\sqrt{\langle \phi,\phi\rangle_{\mathcal D}}$ and, therefore, $\mathcal D=\mathcal A$.
\end{lemma}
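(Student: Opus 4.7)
The plan is to prove the two inclusions $\mathcal{D}\subset\mathcal{A}$ and $\mathcal{A}\subset\mathcal{D}$ by showing that (i) $(\mathcal{A},\|\cdot\|_{\mathcal{D}})$ is complete, and (ii) $C^\infty_c(\mathbb{R}^3)$ is dense in $\mathcal{A}$. Together these identify the completion $\mathcal{D}$ with $\mathcal{A}$.

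For completeness of $\mathcal{A}$, I would take a Cauchy sequence $\{\phi_n\}\subset\mathcal{A}$. Since $\|\nabla\phi_n-\nabla\phi_m\|_2\to 0$, the sequence is Cauchy in $D^{1,2}(\mathbb{R}^3)$, hence converges to some $\phi\in D^{1,2}(\mathbb{R}^3)$ in $D^{1,2}$-norm (and, by Sobolev embedding, in $L^6$ as well). Since $\{\Delta\phi_n\}$ is Cauchy in $L^2$, it converges to some $w\in L^2$. Passing to the distributional limit identifies $w=\Delta\phi$, so $\phi\in\mathcal{A}$ and $\phi_n\to\phi$ in the $\mathcal{D}$-norm.

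For the density, given $\phi\in\mathcal{A}$ I would combine a truncation with a mollification. Take a smooth cut-off $\chi_R$ with $\chi_R\equiv 1$ on $B_R$, $\chi_R\equiv 0$ outside $B_{2R}$, and $|\nabla\chi_R|\le C/R$, $|\Delta\chi_R|\le C/R^2$, and set $\phi_R:=\chi_R\phi$. Writing
\[
\nabla\phi_R=\chi_R\nabla\phi+\phi\nabla\chi_R,\qquad \Delta\phi_R=\chi_R\Delta\phi+2\nabla\chi_R\cdot\nabla\phi+\phi\Delta\chi_R,
\]
the dominant terms $\chi_R\nabla\phi$ and $\chi_R\Delta\phi$ converge to $\nabla\phi$ and $\Delta\phi$ in $L^2$ by dominated convergence. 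For the remainder terms I would use H\"older together with $\phi\in L^6(\mathbb{R}^3)$ (Sobolev) and $\nabla\phi\in L^2(\mathbb{R}^3)$: for instance $\|\phi\,\Delta\chi_R\|_2\le\|\phi\|_{L^6(A_R)}\|\Delta\chi_R\|_{L^3(A_R)}$, and $\|\Delta\chi_R\|_{L^3}^3\lesssim R^{-6}\cdot R^3\to 0$; similarly for $\phi\nabla\chi_R$ and $\nabla\chi_R\cdot\nabla\phi$, where $L^2$-integrability of $\nabla\phi$ on the annulus $A_R=B_{2R}\setminus B_R$ vanishes as $R\to\infty$. Thus $\phi_R\to\phi$ in $\mathcal{D}$-norm. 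Since $\phi_R$ has compact support and lies in $\mathcal{A}$, convolving with a standard mollifier $\rho_\varepsilon$ produces $\phi_R*\rho_\varepsilon\in C^\infty_c(\mathbb{R}^3)$ (for $\varepsilon$ small), and the identities $\nabla(\phi_R*\rho_\varepsilon)=(\nabla\phi_R)*\rho_\varepsilon$, $\Delta(\phi_R*\rho_\varepsilon)=(\Delta\phi_R)*\rho_\varepsilon$ together with standard $L^2$-continuity of convolution give $\phi_R*\rho_\varepsilon\to\phi_R$ in $\mathcal{D}$-norm. A diagonal argument then produces a $C^\infty_c$ sequence converging to $\phi$.

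The main obstacle, I expect, is the truncation estimate for the biharmonic part: one must control the cross term $\phi\,\Delta\chi_R$, which is not immediate since we only have $\nabla\phi\in L^2$ and $\Delta\phi\in L^2$. The key observation is that the embedding $\mathcal{A}\hookrightarrow D^{1,2}(\mathbb{R}^3)\hookrightarrow L^6(\mathbb{R}^3)$ together with the scaling $|\Delta\chi_R|\le C/R^2$ and $|A_R|\sim R^3$ yields $\|\Delta\chi_R\|_3\to 0$, which is exactly what makes the argument work; all other remainder terms are easier.
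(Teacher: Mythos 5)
Your proof is correct and follows essentially the same approach as the paper: both arguments combine a dilated cut-off $\zeta(\cdot/R)$ with mollification and control the commutator terms via exactly the same $L^{6}$--$L^{3}$ H\"older estimate (e.g.\ $\|\phi\,\Delta\chi_R\|_2\le\|\phi\|_{L^6(A_R)}\|\Delta\chi_R\|_{L^3}$), the only differences being that you truncate first and mollify second (the paper does the reverse, so that its product rule is applied to a smooth function rather than distributionally) and that you explicitly verify completeness of $\mathcal{A}$, a step the paper leaves implicit in "therefore $\mathcal{D}=\mathcal{A}$."
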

\begin{proof}
Let $\phi\in \mathcal A$, $\rho\in C^{\infty}_{c}(\mathbb R^{3};\mathbb R_{+})$, $\|\rho\|_{1}=1$, and $\{\rho_{n}\}\subset C^{\infty}_{c}(\mathbb R^{3})$ the sequence of mollifiers given by
$\rho_{n}(x) = n^{3}\rho(nx)$. Define  $\phi_{n} : =\rho_{n}*\phi \in C^{\infty}(\mathbb R^{3})$.
Since, recalling the well known properties of the mollifiers, %\cite[Theorem 2.16]{LL}
$$\partial_{i}\phi_{n}=\rho_{n}*\partial_{i}\phi \in L^{2}(\mathbb R^{3}),
\ \
i=1,2,3,
\quad
 \Delta\phi_{n} =\rho_{n}*\Delta\phi \in L^{2}(\mathbb R^{3}),
 $$
and
$$\| \nabla \phi_n - \nabla \phi\|_2 \to 0, \quad \|\Delta \phi_n -\Delta \phi\|_2 \to 0,$$
 we have
\begin{equation}\label{eq:Cinfinito}
\phi_{n} \in C^{\infty}(\mathbb R^{3})\cap\mathcal A\ \ \text{ and }\ \ \| \phi_{n} - \phi\|_{\mathcal D}
\to0.
\end{equation}
Let now $\xi\in C^{\infty}(\mathbb R^{3})\cap \mathcal A$,
$\zeta\in C_{c}^{\infty}(\mathbb R^{3}; [0,1])$ with $\zeta(x) = 1$ in $B(0,1)$, $\supp(\zeta)\subset B(0,2)$
and define
\begin{equation*}%\label{2eq:bo0}
\xi_{n}:=\zeta(\cdot/n) \xi \in C_{c}^{\infty}(\mathbb R^{3}).
\end{equation*}
We have
\begin{align*}
\nabla \xi_{n} 
&=
\zeta(\cdot/n)\nabla\xi+\frac{1}{n} \xi \nabla\zeta(\cdot/n),\\
\Delta \xi_{n}
&=
\zeta(\cdot/n)\Delta \xi + \frac2n\nabla \xi \nabla \zeta(\cdot/n) + \frac{1}{n^{2}}\xi\Delta \zeta(\cdot/n).
\end{align*}
Noticing that
\[
\frac{1}{n^2} \int \xi^2(x) \left|\nabla\zeta \left(\frac{x}{n}\right)\right|^2
%&=
%\frac{1}{n^2} \int_{|x|\geq n} \xi^2(x) \left|\nabla\zeta \left(\frac{x}{n}\right)\right|^2\\
\leq
\frac{1}{n^2}
\left(\int_{|x|\geq n}   \xi^6\right)^{1/3} 
\left(\int \left|\nabla \zeta \left(\frac{x}{n}\right)\right|^3\right)^{2/3}
=
C \left(\int_{|x|\geq n}   \xi^6\right)^{1/3} \to 0
\]
and, analogously,
\[
\frac2n\nabla \xi \nabla \zeta(\cdot/n),
\frac{1}{n^{2}}\xi\Delta \zeta(\cdot/n)
\to 0
\quad  \text{ in } L^{2}(\mathbb R^{3}),
\]
as $n\to+\infty$, we infer
\begin{align*}
\| \nabla \xi -\nabla \xi_{n}\|_{2}^{2}
&\leq
2\| (1-\zeta(\cdot/n))\partial_{i}\xi\|_{2}^{2} +o_{n}(1)=o_{n}(1) \\
\|\Delta \xi -\Delta \xi_{n}\|_{2}^{2}
&\leq
2\| (1-\zeta(\cdot/n))\Delta \xi\|_{2}^{2} +o_{n}(1)=o_{n}(1)
\end{align*}
showing that $\|\xi_{n} - \xi\|_{\mathcal D}\to 0$. This joint with \eqref{eq:Cinfinito} concludes the proof.
\end{proof}

For every fixed $u\in H^1(\R^3)$, the Riesz Theorem implies that there exists a unique solution $\phi_u\in\mathcal{D}$ of the second equation in \eqref{eq:ScBP}. 
%{\color{red}Moreover, since the fundamental solution of the operator $-\Delta+a^2\Delta^2$ is $\mathcal{K}/(4\pi)$ with
%\[
%\mathcal{K}(x):=\frac{1-e^{-|x|/a}}{|x|}
%\]
%(see \cite[Formula (2.6)]{Pob42}) then the unique solution
%%(see. e.g. \cite[Theorem 6.21]{LL})
%of the second equation is $q\phi_u$ with $$\phi_u:=\mathcal{K}*u^2.$$}
To write {\em explicitly} such a solution (see also \cite[Formula (2.6)]{Pob42}), we consider
\[
\mathcal{K}(x)=\frac{1-e^{-|x|/a}}{|x|}.
\]
We have the following fundamental properties.
\begin{Lem}\label{lemfundsol}
For all $y\in\R^3$, $\mathcal{K}(\cdot-y)$ solves in the sense of distributions
\[
-\Delta \phi +a^2\Delta^2 \phi = 4\pi\delta_y.
\]
Moreover
\begin{enumerate}[label=(\roman*),ref=\roman*]
	\item \label{ifundsol} if $f\in L^1_{\rm loc}(\R^3)$ and, for a.e. $x\in\R^3$, the map $y\in\R^3\mapsto f(y)/|x-y|$ is summable, then $\mathcal{K}*f \in L^1_{\rm loc}(\R^3)$;
	\item \label{iifundsol}if $f\in L^p (\R^3)$ with $1\leq p< 3/2$, then $\mathcal{K}*f\in L^q(\R^3)$ for $q\in(3p/(3-2p),+\infty]$.
\end{enumerate}
In both cases $\mathcal{K}*f$ solves
\begin{equation}
\label{fundsolf}
-\Delta \phi +a^2\Delta^2 \phi = 4\pi f
\end{equation}
in the sense of distributions and we have the following distributional derivatives
\[
\nabla (\mathcal{K}*f)= (\nabla\mathcal{K})*f
\quad\hbox{and}\quad
\Delta (\mathcal{K}*f)= (\Delta\mathcal{K})*f
\quad\hbox{a.e. in } \R^3.
\]
\end{Lem}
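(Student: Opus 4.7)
My strategy is to prove first the fundamental-solution identity by decomposing $\mathcal{K}$ into a Newtonian and a Yukawa kernel, and then to transfer all conclusions about $\mathcal{K} * f$ to Fubini- and Young-type arguments. By translation invariance I reduce to $y = 0$ and write
\[
\mathcal{K}(x) = \mathcal{G}(x) - Y_a(x), \qquad \mathcal{G}(x) := \frac{1}{|x|}, \qquad Y_a(x) := \frac{e^{-|x|/a}}{|x|}.
\]
The classical identity $-\Delta \mathcal{G} = 4\pi\delta_0$ and the Yukawa identity $-\Delta Y_a + a^{-2}Y_a = 4\pi\delta_0$ (both verified by integration by parts on $\R^3 \setminus B_\varepsilon$ followed by $\varepsilon \to 0^+$) yield $-\Delta \mathcal{K} = Y_a/a^2$ in the sense of distributions. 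Applying $-a^2 \Delta$ once more and reusing the Yukawa equation produces $a^2 \Delta^2 \mathcal{K} = 4\pi\delta_0 - Y_a/a^2$; summing the two identities gives $(-\Delta + a^2 \Delta^2)\mathcal{K} = 4\pi\delta_0$.

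For the convolution statements, in case (i) I use $0 \leq \mathcal{K} \leq \mathcal{G}$ to get $|\mathcal{K} * f|(x) \leq \int |f(y)|/|x-y|\,dy$, finite a.e. by hypothesis. To upgrade this to $L^1_{\rm loc}$ I fix a compact $K \subset B_R$ and apply Fubini-Tonelli to
\[
\int_K |\mathcal{K} * f|\,dx \leq \int |f(y)|\,I_K(y)\,dy, \qquad I_K(y) := \int_K \mathcal{K}(x-y)\,dx,
\]
splitting the $y$-integral at $|y| = 2R$: the bound $\mathcal{K} \leq 1/a$ controls the inner piece by $C\|f\|_{L^1(B_{2R})}$, while $\mathcal{K}(z) \leq 1/|z|$ yields $I_K(y) \leq C/|y|$ outside, so the outer piece is finite because $\int_{|y|>2R} |f(y)|/|y|\,dy$ can be compared to $\int |f(y)|/|x_0-y|\,dy$ for any $x_0$ at which the summability hypothesis holds. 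In case (ii) a direct computation shows $\mathcal{K} \in L^s(\R^3)$ precisely for $s \in (3, +\infty]$ (it is bounded near $0$ and behaves like $1/|x|$ at infinity); Young's inequality with $s \in (3, p']$ and $1 + 1/q = 1/s + 1/p$ then yields $\mathcal{K} * f \in L^q$ for every $q \in (3p/(3-2p), +\infty]$, the two endpoints of this range corresponding to $s=3$ (excluded) and $s=p'$.

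Once these integrability facts are in hand, for any $\varphi \in C^\infty_c(\R^3)$ Fubini is justified and allows
\[
\int (\mathcal{K} * f)(-\Delta + a^2 \Delta^2)\varphi\,dx = \int f(y)\!\left[\int \mathcal{K}(x-y)(-\Delta + a^2 \Delta^2)\varphi(x)\,dx\right]dy = 4\pi \int f\varphi,
\]
since the inner bracket equals $4\pi \varphi(y)$ by the fundamental-solution identity, proving \eqref{fundsolf}. Finally, $\nabla \mathcal{K}$ and the pointwise Laplacian $\Delta \mathcal{K} = -Y_a/a^2$ derived above are in $L^1_{\rm loc}(\R^3)$, so the standard differentiation-under-convolution lemma gives $\nabla(\mathcal{K}*f) = (\nabla \mathcal{K})*f$ and $\Delta(\mathcal{K}*f) = (\Delta \mathcal{K})*f$ a.e. The main obstacle I expect is justifying Fubini in case (i) with only $L^1_{\rm loc}$ control on $f$: one must leverage the pointwise summability hypothesis at a single good $x_0$ to tame the tail of $f$ at infinity, via the splitting and comparison described above.
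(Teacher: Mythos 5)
Your argument is correct in substance, and for the fundamental-solution identity it takes a genuinely different route from the paper. The paper proves $(-\Delta+a^2\Delta^2)\mathcal{K}=4\pi\delta_0$ by a direct integration by parts on the annulus $\{r<|x|<R\}$, computing every boundary term explicitly from the formulas for $\nabla\mathcal{K}$, $\Delta\mathcal{K}$, $\nabla\Delta\mathcal{K}$ and letting $r\to0^+$; you instead split $\mathcal{K}=\mathcal{G}-Y_a$ and combine the Newtonian and Yukawa identities, which is cleaner and confines the only singular integration by parts to two classical facts. Both work; the paper's computation has the side benefit of producing the explicit derivative formulas and the normal-derivative identity $(\nabla\mathcal{K}-a^2\nabla\Delta\mathcal{K})\cdot\nu=1/r^2$ that it reuses later. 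Your treatment of (i) (Fubini--Tonelli with the splitting at $|y|=2R$ and the comparison of $1/|y|$ with $1/|x_0-y|$ at a good point $x_0$) and of (ii) (Young's inequality with $\mathcal{K}\in L^s(\R^3)$ for $s\in(3,p']$) matches the paper, which delegates these steps to the proof of \cite[Theorem 6.21]{LL} and to the Young inequality respectively; the same goes for the Fubini argument giving \eqref{fundsolf}.

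The one place you are too quick is the final claim that $\nabla(\mathcal{K}*f)=(\nabla\mathcal{K})*f$ and $\Delta(\mathcal{K}*f)=(\Delta\mathcal{K})*f$. Knowing $\nabla\mathcal{K},\Delta\mathcal{K}\in L^1_{\rm loc}(\R^3)$ does not by itself guarantee that the convolutions $(\nabla\mathcal{K})*f$ and $(\Delta\mathcal{K})*f$ converge when $f$ is only locally integrable as in (i), nor does it license the Fubini interchange hidden in ``differentiation under the convolution''. The paper devotes the second half of its proof to exactly this point: it establishes $|\partial_i\mathcal{K}|\le C$ near the origin and $|\partial_i\mathcal{K}|\le C/|x|$ at infinity, together with $|\Delta\mathcal{K}|\le 1/(a^2|x|)$, and then runs the same splitting argument you used for $\mathcal{K}*f$ to show that these convolutions are finite a.e.\ and locally integrable, after which Fubini yields the identities tested against $\partial_i\varphi$ and $\Delta\varphi$. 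You already have all the needed bounds (your formula $\Delta\mathcal{K}=-Y_a/a^2$ makes the last one immediate), so this is a gap in justification rather than in method, but as written the final step does not follow from what you proved.
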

\begin{proof}
Let us consider for simplicity $y=0$ and prove that for every $\varphi\in C_c^\infty(\R^3)$
\begin{equation*}
%\label{test}
-\int \mathcal{K} \Delta \varphi +a^2\int \mathcal{K}  \Delta^2 \varphi = 4\pi \varphi(0).
\end{equation*}
Of course it is enough to show that
\begin{equation}
\label{limitr0}
\lim_{r\to 0^+} I(r)= 4\pi \varphi(0)
\end{equation}
where
\[
I(r):=-\int_{|x|>r} \mathcal{K} \Delta \varphi +a^2\int_{|x|>r} \mathcal{K}  \Delta^2 \varphi.
\]
Since $\varphi$ has compact support, we consider the annulus $A:=\{x\in\R^3: r<|x|<R\}$ for $R$ large enough and a standard integration by parts shows that
\begin{align*}
I(r)
&=
-\int_A \mathcal{K} \Delta \varphi +a^2\int_A \mathcal{K}  \Delta^2 \varphi\\
&=
 \int_A \varphi (-\Delta \mathcal{K} +a^2\Delta^2  \mathcal{K})
+ \int_{|x|=r} \varphi (\nabla\mathcal{K} - a^2 \nabla \Delta \mathcal{K}  )\cdot \nu
 +\int_{|x|=r} \mathcal{K} (a^2 \nabla(\Delta\varphi)-\nabla\varphi )\cdot \nu\\
 &\quad
+ a^2 \int_{|x|=r} \Delta \mathcal{K} \nabla\varphi \cdot \nu
-a^2 \int_{|x|=r}  \Delta\varphi \nabla\mathcal{K}  \cdot \nu
\end{align*}
where $\nu$ is the unit outward normal to $A$.
Since $\varphi$ is continuous, $(a^2 \nabla(\Delta\varphi)-\nabla\varphi )\cdot \nu,  \nabla\varphi \cdot \nu$, and $\Delta\varphi$ are bounded,
$\mathcal{K}$ can be extended continuously in $0$ by setting $\mathcal K(0)=1/a$, and, for $x\neq0$,	
\begin{align}
%\mathcal{K}&=\frac{1-e^{-|x|/a}}{|x|}=\frac{1}{|x|}+a^2\Delta\mathcal{K}\\
\nabla\mathcal{K}&=-\frac{x}{|x|^3}+\frac{x}{|x|^3}\left(\frac{|x|}{a}+1\right)e^{-|x|/a},
%=-\frac{x}{|x|^3} + a^2 \nabla\Delta\mathcal{K}
\label{eq:nablaK}
\\
\Delta\mathcal{K}&=-\frac{e^{-|x|/a}}{a^2 |x|},
%=\frac{1}{a^2}\left(\mathcal{K}-\frac{1}{|x|}\right)
\label{eq:DeltaK}\\
\nabla\Delta\mathcal{K}&=\frac{x}{a^2 |x|^3}\left(\frac{|x|}{a}+1\right)e^{-|x|/a},
%=\frac{1}{a^2}\left(\nabla\mathcal{K} + \frac{x}{|x|^3}\right)
\nonumber\\
\Delta^2 \mathcal{K}&= \frac{1}{a^2}\left(\Delta \mathcal{K} + \operatorname{div}\frac{x}{|x|^3}\right), \nonumber
\end{align}
we have
\begin{align*}
-\Delta \mathcal{K} +a^2\Delta^2  \mathcal{K}= 0
\hbox{ in }A
& \Longrightarrow
\int_A \varphi (-\Delta \mathcal{K} +a^2\Delta^2  \mathcal{K})=0,\\
(\nabla\mathcal{K} - a^2 \nabla \Delta \mathcal{K}) \cdot \nu = \frac{1}{r^2}
\hbox{ on } |x|=r
& \Longrightarrow
\int_{|x|=r} \varphi (\nabla\mathcal{K} - a^2 \nabla \Delta \mathcal{K}  )\cdot \nu\\
&\qquad
= \int_S \varphi(r\sigma) d\sigma \to 4\pi \varphi(0)
\hbox{ as } r\to 0^+,\\
\mathcal{K} \leq 1/a
%\hbox{ for small }|x|
& \Longrightarrow
\int_{|x|=r} \mathcal{K} (a^2 \nabla(\Delta\varphi)-\nabla\varphi )\cdot \nu
\to 0
\hbox{ as } r\to 0^+,\\
|\Delta\mathcal{K}| \leq 1/(a^2|x|)
&\Longrightarrow
\int_{|x|=r} \Delta \mathcal{K} \nabla\varphi \cdot \nu
\to 0
\hbox{ as } r\to 0^+,\\
\nabla\mathcal{K}  \cdot \nu = \frac{1}{r^2} \left[1-\left(\frac{r}{a}+1\right)e^{-\frac{r}{a}}\right]
\hbox{ on } |x|=r
&\Longrightarrow
\int_{|x|=r}  \Delta\varphi \nabla\mathcal{K}  \cdot \nu
\to 0
\hbox{ as } r\to 0^+,
\end{align*}
where $S$ is the unit sphere in $\R^3$. Thus \eqref{limitr0} is proved.\\
To get (\ref{ifundsol}), we observe that, by Fubini Theorem, for all balls $B\subset\R^3$ 
\[
\int_B |\mathcal{K}*f|
\leq
\int_{\R^3} \left(\int_B \mathcal{K}(x-y)dx\right) |f(y)| dy
\]
and, since $\mathcal{K}\leq | \cdot|^{-1}$, we can conclude arguing as in \cite[Proof of Theorem 6.21]{LL}.\\
Since $\mathcal K \in L^{\tau}(\mathbb R^{3})$ for $\tau\in(3,+\infty]$, the Young inequality (see e.g. \cite[Inequality (4), p 99]{LL}) allows to get (\ref{iifundsol}), since
\[
\frac{1}{q}=\frac{1}{p}+\frac{1}{\tau}-1 < \frac{1}{p}+\frac{1}{3}-1 = \frac{3-2p}{3p}.
\]
Moreover the fact that $\mathcal{K}*f$ solves \eqref{fundsolf} in the sense of distributions, namely, for all $\varphi\in C_c^\infty(\R^3)$
\[
-\int (\mathcal{K}*f) \Delta\varphi + a^2 \int (\mathcal{K}*f) \Delta^2 \varphi = 4\pi \int f \varphi,
\]
is a consequence of the Fubini Theorem and of the first part of this Lemma.\\
To conclude, let us consider, for instance, the assumptions in (\ref{ifundsol}). The proof of the remaining case is similar.\\
We claim that the functions $(\nabla\mathcal{K})*f$ and $(\Delta\mathcal{K})*f$ are well defined a.e. in $\R^3$. In fact, by \eqref{eq:nablaK},
%\[
%\nabla\mathcal{K}=-\frac{x}{|x|^3}+\frac{x}{|x|^3}\left(\frac{|x|}{a}+1\right)e^{-|x|/a}
%\]
%\[
%\partial_i  \mathcal{K}=-\frac{x_i}{|x|^3}+\frac{x_i}{|x|^3}\left(\frac{|x|}{a}+1\right)e^{-|x|/a}
%\]
we have that, for every $i=1,2,3$,
%we have that $\partial_i  \mathcal{K}(x)$ is bounded. In fact 
\begin{equation}
\label{gradKbound}
|\partial_i  \mathcal{K}(x)|
\leq \frac{1}{|x|^2}\left(1 - e^{-|x|/a} - \frac{|x|}{a}e^{-|x|/a}\right)
%\sim - \frac{1}{2}\frac{x_i}{a^2|x|} 
\leq C
\quad
\hbox{for } |x| \hbox{ small}
\end{equation}
and, since
\begin{equation}
\label{38bis}
|\partial_i  \mathcal{K}(x)|\leq\frac{1}{|x|^2}+\frac{1}{|x|^2}\left(\frac{|x|}{a}+1\right)e^{-|x|/a},
\end{equation}
then, in particular,
\[
|\partial_i  \mathcal{K}(x)|\leq \frac{2}{|x|^2} +\frac{1}{a|x|}\leq \frac{C}{|x|}
\quad
\hbox{for } |x| \hbox{ large} .
\]
Thus, if $r>0$ is sufficiently large, for a.e. $x$ in $\R^3$, using the summability of the map $y\in\R^3\mapsto f(y)/|x-y|$, 
we deduce
\begin{align*}
|[\partial_i  \mathcal{K}* f ](x)|
&\leq
\int_{B_r(x)} |\partial_i  \mathcal{K}(x-y) | |f(y)|dy
+  \int_{B_r^c(x)} |\partial_i  \mathcal{K} (x-y)| |f(y)|dy\\
&\leq
C \left[
\int_{B_r(x)} |f(y)| dy +  \int_{B_r^c(x)} \frac{|f(y)|}{|x-y|}dy
\right] <+\infty.
\end{align*}
Moreover, by \eqref{eq:DeltaK}, $|\Delta\mathcal{K}|\leq 1/(a^2|\cdot|)$ and so, arguing again as in \cite[Proof of Theorem 6.21]{LL} we get the claim.\\ 
Then, since
\[
|\mathcal{K}(x)|, |\partial_i  \mathcal{K}(x)|, |\Delta  \mathcal{K}(x)|\leq \frac{C}{|x|},
\]
and the map $y\in\R^3\mapsto f(y)/|x-y|$ is summable, \cite[Theorem 6.21]{LL} implies that
\[
\mathcal{K}*f, \partial_{i}\mathcal{K}*f,\Delta\mathcal{K}*f\in L_{\rm loc}^1(\R^3)
\]
and so, for all $\varphi\in C_c^\infty(\R^3)$,
\begin{align*}
(x,y)&\mapsto \partial_i \varphi (x) \mathcal{K}(x-y) f(y) \in L^1(\R^3\times\R^3),\\
(x,y)&\mapsto \Delta \varphi (x) \mathcal{K}(x-y) f(y) \in L^1(\R^3\times\R^3),\\
(x,y)&\mapsto \varphi (x) \partial_i  \mathcal{K} (x-y) f(y)\in L^1(\R^3\times\R^3),\\
(x,y)&\mapsto \varphi (x) \Delta  \mathcal{K} (x-y) f(y)\in L^1(\R^3\times\R^3).
\end{align*}
Hence, by Fubini's Theorem and using a limit argument as in the first part of this proof we have that for all $\varphi\in C_c^\infty(\R^3)$
\[
\int (\mathcal{K}*f) \partial_i \varphi  = - \int \varphi (\partial_i \mathcal{K})*f
\qquad i=1,2,3
\]
and 
\[
\int (\mathcal{K}*f) \Delta\varphi  = \int \varphi (\Delta \mathcal{K})*f.
\]
The proof is thereby completed.
\end{proof}
Then, if we fix $u\in H^1(\R^3)$, the unique solution in $\mathcal{D}$ %(see. e.g. \cite[Theorem 6.21]{LL})
of the second equation in \eqref{eq:ScBP} is
\begin{equation}
\label{phiu}
\phi_u:=\mathcal{K}*u^2.
\end{equation}
Actually the following useful properties hold.
\begin{lemma}\label{lem:propphi}
	For every $u\in H^{1}(\mathbb R^{3})$ we have:
	\begin{enumerate}[label=(\roman*),ref=\roman*]
		\item\label{propphiii} for every $y\in\R^3$, $\phi_{u( \cdot+y)} = \phi_{u}( \cdot+y)$;
		\item\label{propphiiii} $\phi_{u}\geq0$;
		\item \label{propphiiv} for every  $s\in (3,+\infty]$, $\phi_{u}\in   L^{s}(\mathbb R^{3})\cap C_{0}(\mathbb R^{3})$;
		\item \label{propphiv} for every $s\in (3/2,+\infty]$, $\nabla \phi_{u} = \nabla \mathcal K * u^{2}\in L^{s}(\mathbb R^{3})\cap C_{0}(\mathbb R^{3})$;
		\item \label{propphivi}  $\phi_u\in\mathcal{D}$;
		\item \label{propphivii} $\|\phi_{u}\|_{6}\leq C \|u\|^{2}$;
		\item \label{propphiviii} $\phi_{u}$ is the unique minimizer of the functional
		$$E(\phi) = \frac12 \|\nabla \phi\|_{2}^{2} +\frac {a^{2}}{2} \|\Delta\phi\|_{2}^{2}- \int\phi u^{2}, \quad \phi\in \mathcal D.$$
	\end{enumerate}
Moreover
\begin{enumerate}[label=(\roman*),ref=\roman*]\setcounter{enumi}{7}
	\item \label{propphii} if $v_{n}\rightharpoonup v$ in $H^{1}(\mathbb R^{3})$, then $\phi_{v_{n}} \rightharpoonup \phi_{v}$ in $\mathcal D$.
\end{enumerate}
\end{lemma}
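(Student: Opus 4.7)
I would work through the items in order, exploiting the explicit representation $\phi_u=\mathcal K*u^2$ together with Lemma~\ref{lemfundsol}. Items (i)--(iv) are direct consequences of the convolution formula: (i) is translation invariance of convolution, and (ii) follows since $\mathcal K\ge0$. For (iii) and (iv), $u\in H^1(\R^3)\hookrightarrow L^2\cap L^6$ gives $u^2\in L^1(\R^3)$; then (iii) is Lemma~\ref{lemfundsol}(\ref{iifundsol}) with $p=1$, while (iv) follows from the pointwise estimates on $\nabla\mathcal K$ recorded in the proof of Lemma~\ref{lemfundsol} (bounded near the origin by \eqref{gradKbound}, $O(|x|^{-2})$ at infinity by \eqref{38bis}), which put $\nabla\mathcal K$ in every $L^\tau$ with $\tau\in(3/2,+\infty]$, together with Young's inequality. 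In both cases continuity and vanishing at infinity come from approximating $u^2$ in $L^1$ by compactly supported smooth data and using that $\mathcal K$ is bounded and continuous on $\R^3$.

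For (v) I would directly check that $\nabla\mathcal K,\Delta\mathcal K\in L^2(\R^3)$: the bound $|\Delta\mathcal K(x)|\le 1/(a^2|x|)$ from \eqref{eq:DeltaK} is square-integrable near the origin and decays exponentially at infinity, while the estimates on $\nabla\mathcal K$ just used are even better. Young's inequality with $u^2\in L^1$ then gives $\nabla\phi_u,\Delta\phi_u\in L^2$, so $\phi_u\in\mathcal D$. For (vi), Sobolev yields $\|\phi_u\|_6\le C\|\nabla\phi_u\|_2$, and testing the second equation of \eqref{eq:ScBP} against $\phi_u$ itself, together with H\"older and the embedding $H^1\hookrightarrow L^{12/5}$, gives
\[
\|\nabla\phi_u\|_2^2\le 4\pi\int\phi_u\,u^2\le 4\pi\|\phi_u\|_6\|u\|_{12/5}^2\le C\|\nabla\phi_u\|_2\|u\|^2,
\]
from which the bound follows by division.

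Item (vii) is standard convex analysis: $E$ is $C^1$ and strictly convex (its quadratic part is $\tfrac12\|\cdot\|_{\mathcal D}^2$), and the Sobolev and H\"older estimates above make it coercive on $\mathcal D$, so it admits a unique minimizer whose Euler--Lagrange equation is the weak form of the second line of~\eqref{eq:ScBP}; by uniqueness of the Riesz representative this minimizer coincides with $\phi_u$. Finally, for (viii), boundedness of $\{v_n\}$ in $H^1$ and the argument in (vi) (applied also to $\|\Delta\phi_{v_n}\|_2$) show that $\{\phi_{v_n}\}$ is bounded in $\mathcal D$, so along a subsequence $\phi_{v_n}\rightharpoonup\psi$ for some $\psi\in\mathcal D$. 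For each $\xi\in C_c^\infty(\R^3)$ one passes to the limit in the weak formulation: the left-hand side converges by weak convergence in $\mathcal D$, while Rellich--Kondrachov yields $v_n\to v$ in $L^2_{\mathrm{loc}}$, which combined with the $L^2$-boundedness of $\{v_n\}$ gives $v_n^2\to v^2$ in $L^1_{\mathrm{loc}}$ and hence $\int v_n^2\xi\to\int v^2\xi$. Thus $\psi$ satisfies the same weak equation as $\phi_v$, and density of $C_c^\infty$ in $\mathcal D$ (from the preceding lemma) together with uniqueness force $\psi=\phi_v$; a standard subsequence argument upgrades this to convergence of the whole sequence. The main obstacle is precisely this last step: transferring compactness from $\{v_n\}\subset H^1$ to the nonlinear quantity $v_n^2$ requires localization via compactly supported test functions before extending by density.
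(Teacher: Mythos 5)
Your proposal is correct and, item by item, follows the same outline as the paper's proof: (i)--(ii) from the convolution representation, (iii)--(iv) from the integrability of $\mathcal K$ and $\nabla\mathcal K$ via Young's inequality, (vii) by strict convexity and coercivity of $E$, and (viii) by passing to the limit in the weak formulation $\langle\phi_{v_n},\varphi\rangle_{\mathcal D}=4\pi\int v_n^2\varphi$ against $\varphi\in C_c^\infty(\R^3)$ and concluding by density (your extra subsequence extraction is harmless but not needed once one has the uniform bound from (vi)). Two points differ, both in your favor or neutral. For (v), the paper justifies $\Delta\phi_u\in L^2$ by the bound $|\Delta\mathcal K*u^2|\le a^{-2}(|\cdot|^{-1}*u^2)$ and the claim that the latter is in $L^2(\R^3)$; this majorant generically decays only like $|x|^{-1}$ at infinity and so is \emph{not} square integrable in general, whereas your argument --- $\Delta\mathcal K\in L^2(\R^3)$ outright thanks to its exponential decay, then Young with $u^2\in L^1$ --- is the clean way to get the conclusion and is the version I would keep. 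For (vi), you use the Sobolev embedding $\mathcal D\hookrightarrow L^6$ and H\"older with exponents $(6,6/5)$, while the paper uses the embedding $\mathcal D\hookrightarrow L^\infty$ of Lemma \ref{lem31} together with $u^2\in L^1$; both yield $\|\phi_u\|_{\mathcal D}\le C\|u\|^2$, the paper's route simply showcasing the $L^\infty$ embedding that is special to the fourth-order operator.
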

\begin{proof}
	Let us fix $u\in H^{1}(\mathbb R^{3})$.
	Items  \eqref{propphiii} and \eqref{propphiiii} are obvious.\\
	Being
	$\mathcal K \in L^{\tau}(\mathbb R^{3})$ for $\tau\in(3,+\infty]$, by well known properties of the convolution product, we have (\ref{propphiiv}).\\
	Moreover, since $\nabla \mathcal K$ is bounded near $0$ (see \eqref{gradKbound}) and, by \eqref{38bis},
	decays as $|\cdot|^{-2}$ at infinity, then $\nabla \mathcal K\in L^{\tau}(\mathbb R^{3})$ for $\tau\in(3/2,+\infty]$ and so we get (\ref{propphiv}).\\
	Property (\ref{propphivi}) holds since $\nabla\phi_u\in L^{2}(\mathbb R^{3})$ and $\Delta\phi_u =\Delta\mathcal{K}*u^2\leq 1/(a^2|\cdot|)*u^2 \in L^{2}(\mathbb R^{3})$.\\
	Multiplying the second equation in \eqref{eq:ScBP} by the solution $\phi_{u}$, integrating and using Lemma \ref{lem31}, we find
	\begin{equation*}%\label{eq:}
	\|\phi_{u}\|_{\mathcal D}^{2}\leq C\|u^{2}\|_{1}\|\phi_{u}\|_{\infty}\leq C\|u\|^{2}\|\phi_{u}\|_{\mathcal D}
	\end{equation*}
	and then
	\begin{equation*}%\label{lemmaphiw}
	\|\phi_{u}\|_{6}\leq C \|\phi_{u}\|_{\mathcal D}\leq C \|u\|^{2}.
	\end{equation*}
	obtaining \eqref{propphivii}. Property \eqref{propphiviii} is also trivial.\\	
	We conclude observing that for every $\varphi\in  C_{c}^{\infty}(\mathbb R^{3})$ we have
	$$\langle \phi_{v_{n}} , \varphi\rangle_{\mathcal D} =4\pi \int v_{n}^{2}\varphi \to 4\pi \int v^{2} \varphi = 
	\langle \phi_{v} , \varphi\rangle_{\mathcal D} $$
	and we obtain \eqref{propphii} by density.	
\end{proof}

\subsection{The functional setting}
It is easy to see that the critical points of the $C^{1}$ functional
\begin{equation*}%\label{eq:F}
 F_{q}(u,\phi) = \frac{1}{2} \|\nabla u\|_2^2 + \frac{\omega}{2}  \|u\|_2^2 + \frac{q^{2}}{2} \int \phi u^2
-\frac{q^{2}}{16\pi} \|\nabla \phi\|_2^2 -\frac{a^2q^{2}}{16\pi} \|\Delta \phi\|_2^2 
-\frac{1}{p} \|u\|_p^p
 \end{equation*}
on $H^{1}(\mathbb R^{3})\times \mathcal D $ are weak solutions of \eqref{eq:ScBP}.
Indeed if $ (u, \phi)\in H^{1}(\mathbb R^{3})\times \mathcal D$ is a critical point of $F_{q}$ then
\begin{equation*}%\label{eq:partialu}
0=\partial_{u} F_{q}( u,\phi)[v]
=\int \nabla u\nabla v+\omega\int uv
+q^{2}\int \phi u v -\int |u|^{p-2} u v, 
\qquad
\hbox{for all } v\in H^{1}(\mathbb R^{3}),
\end{equation*}
and
\begin{equation*}%\label{eq:partialphi}
0=\partial_{\phi} F_{q}( u,\phi)[\xi]=\frac{ q^{2}} {2} \int {u}^{2} \xi 
- \frac{q^{2}}{8\pi}\int\nabla \phi \nabla \xi
- \frac{a^2 q^{2}}{8\pi}\int\Delta \phi \Delta \xi,
\qquad
\hbox{for all } \xi\in \mathcal D. 
\end{equation*}
However the functional $F_{q}$ is strongly unbounded from below and above and
hence the usual techniques of the Critical Point Theory cannot be used immediately.
Hence we adopt a reduction procedure which is successfully used also
with  other system of equations involving the coupling between
matter and electromagnetic field. Here we just revise the main argument. We refer the  reader to \cite{BF98,BFKleinG} for the details.
Then, first of all one observes that actually $\partial_{\phi} F_{q}$ is a $C^{1}$ function. Thus,
if $G_{\Phi}$ is the graph of the map 
$\Phi : u\in H^{1}(\mathbb R^{3})\mapsto \phi_{u}\in \mathcal D$, an application of the Implicit Function Theorem
gives
$$G_{\Phi}=\left\{(u,\phi)\in H^{1}(\mathbb R^{3})\times \mathcal D: \partial_{\phi} F_{q}(u,\phi) = 0\right\}
\quad  \text{ and } \quad \Phi \in C^{1}(H^{1}(\mathbb R^{3}); \mathcal D ).$$
Then we define the {\sl reduced} functional
\[
\mathcal J_{q}(u)
:=
F_{q}(u,\Phi(u))
=
\frac{1}{2} \|\nabla u\|_2^2 + \frac{\omega}{2}  \|u\|_2^2 + \frac{q^2}{4} \int \phi_u u^2
%-\frac{1}{4} \|\nabla \phi\|_2^2 -\frac{a^2}{4} \|\Delta \phi\|_2^2 
-\frac{1}{p} \|u\|_p^p,
\]
which is of class $C^{1}$ on $H^{1}(\mathbb R^{3})$  and, for all $u,v\in H^{1}(\mathbb R^{3})$
\begin{align*}
\mathcal J_{q}'(u)[v]
&=
\partial_{u} F_{q}(u,\Phi(u))[v]+
\partial_{\phi}F_{q}(u,\Phi(u))\circ \Phi'(u) [v]  \\
&=
\partial_{u} F_{q}(u,\Phi(u))[v] \\
&=
\int \nabla u \nabla v+\omega \int uv+q^{2}\int \phi_{u}uv - \int|u|^{p-2}uv.
\end{align*}

Rigorously the functional $\mathcal J_{q}$ should depend also on $a>0$.
However, for the sake of simplicity, we do not write explicitly this dependence,
which is deserved in Section \ref{sec:Limite} where the limit as $a\to 0$ is considered.

We have that the following statements are equivalent:
\begin{enumerate}[label=(\roman*),ref=\roman*]
	\item the pair $(u,\phi)\in H^{1}(\mathbb R^{3})\times \mathcal D$ is a critical point of $F_{q}$, 
	i.e. $(u,\phi)$ is a solution of \eqref{eq:ScBP};
	\item $u$ is a critical point of $\mathcal J_{q}$ and $\phi =\phi_{u}$.
\end{enumerate}

In virtue of this, to solve problem \eqref{eq:ScBP} is equivalent
to find the critical points of $\mathcal J_{q}$, namely to solve
\begin{equation*}%\label{eq:equation}
- \Delta u +\omega u+q^{2}\phi_{u} u = |u|^{p-2}u \quad  \mbox{in } 
\mathbb{R}^{3}.
\end{equation*}

%
%
%Thus, let us define the {\em reduced} functional $\J_q: H^1(\R^3)\to\R$ as
%\[
%\J_q(u):= F(u, q\phi_{u}).
%\]
%A simple calculation shows that
%\[
%\J_q(u)=
%\frac{1}{2} \|\nabla u\|_2^2 + \frac{\omega}{2}  \|u\|_2^2 + \frac{q^2}{4} \int \phi_u u^2
%%-\frac{1}{4} \|\nabla \phi\|_2^2 -\frac{a^2}{4} \|\Delta \phi\|_2^2 
%-\frac{1}{p} \|u\|_p^p.
%\]
%\todo[inline]{Qui ho messo standard, ma forse qualcosina dovremmo controllarla. Per esempio dovremmo scrivere che $u\mapsto \phi_u$ e' $C^1$. GAETANO}
%Moreover it is standard to verify that the functional $\J$ is $C^1$ in $H^1(\R^3)$ and that $u\in H^1(\R^3)$ is a critical point of $\J$ if and only if $(u,q\phi_u)$ is a critical point of $F$.\\

\subsection{The Mountain Pass Geometry}
We conclude this section showing that the functional $\J_q$ satisfies the geometrical assumptions of the Mountain Pass Theorem \cite{AR}.

\begin{Lem}\label{Lem:MP}
	The functional $\J_q$ satisfies:
	\begin{enumerate}[label=(\roman*),ref=\roman*]
		\item \label{MPi}$\J_q(0)=0$;
		\item \label{MPii}there exist $\delta,\rho>0$ such that $\J_q(u)>\delta$ for all $u\in H^{1}(\R^3)$  with $\|u\|=\rho$;
		\item \label{MPiii}there exists $w\in H^{1}(\R^3)$ with $\|w\|>\rho$ such that $\J_q(w)<0$ for every $q\neq 0$ if $p\in(3,6)$, and for $q$ small enough if $p\in(2,3]$.
	\end{enumerate}
\end{Lem}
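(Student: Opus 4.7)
I would prove the three items of the lemma in order. Item (\ref{MPi}) is immediate. For item (\ref{MPii}), the sign condition $\phi_u u^2\geq 0$ granted by Lemma~\ref{lem:propphi}(\ref{propphiiii}) makes the nonlocal term a non-negative bonus, so that combined with the continuous embedding $H^{1}(\R^3)\hookrightarrow L^p(\R^3)$ one obtains
\[
\J_q(u)\geq \tfrac12\|u\|^2 - \tfrac{C}{p}\|u\|^p;
\]
since $p>2$, choosing $\rho>0$ so small that $C\rho^{p-2}/p<1/4$ yields $\J_q|_{\{\|u\|=\rho\}}\geq \rho^2/4=:\delta>0$, uniformly in $q$.

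For item (\ref{MPiii}) I would split into two cases matching the two clauses in the statement. For $p\in(3,6)$ with $q$ arbitrary, a single multiplicative rescaling $u\mapsto tu_0$ is insufficient when $p\leq 4$, nor is a pure dilation $u_0(\cdot/R)$ (since $\int \phi_{u}u^2$ then scales like $R^5$ and overwhelms $\|u\|_p^p\sim R^3$). Instead I would use the two-parameter family
\[
w_R(x):=R^{\alpha}u_0(R^{\beta}x),\qquad R>1,
\]
for a fixed $u_0\in C_c^\infty(\R^3)\setminus\{0\}$ and $\alpha,\beta>0$ to be tuned. A routine change of variables combined with the pointwise bound $\mathcal{K}\leq 1/a$ from Lemma~\ref{lem31} gives
\[
\int\phi_{w_R}w_R^2 \;\leq\; \tfrac{1}{a}\,R^{4\alpha-6\beta}\|u_0\|_2^4,
\]
while the other norms scale as pure powers of $R$. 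Requiring the negative contribution $R^{p\alpha-3\beta}\|u_0\|_p^p/p$ to strictly dominate every positive one boils down to the two linear inequalities $(p-2)\alpha>2\beta$ and $(p-4)\alpha>-3\beta$, whose joint solvability for some $\alpha,\beta>0$ holds as soon as $p>14/5$, in particular for every $p\in(3,6)$. The first inequality forces moreover $p\alpha-3\beta>\beta(6-p)/(p-2)>0$ and $2\alpha-\beta>0$, so $\J_q(w_R)\to-\infty$ and $\|w_R\|\to+\infty$ as $R\to\infty$, and taking $R$ large produces the required $w$.

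For $p\in(2,3]$ those linear constraints become incompatible, so I would exploit the smallness of $q$. Fix $u_0\not\equiv 0$ with $\|u_0\|>\rho$; since $p>2$, the map $t\mapsto \J_0(tu_0)=\tfrac{t^2}{2}\|u_0\|^2-\tfrac{t^p}{p}\|u_0\|_p^p$ tends to $-\infty$ as $t\to+\infty$, so one can pick $t_0>1$ with $\J_0(t_0 u_0)<-1$. The identity
\(
\J_q(t_0 u_0)=\J_0(t_0 u_0)+\tfrac{q^2 t_0^4}{4}\int\phi_{u_0}u_0^2
\)
then yields $\J_q(t_0 u_0)<0$ whenever $|q|$ lies below a threshold $q_*=q_*(u_0,t_0)$, and one sets $w:=t_0 u_0$. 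The main obstacle throughout item (\ref{MPiii}) is the positive-sign quartic growth of the Coulomb-type term $\int\phi_u u^2$: it prevents any purely multiplicative rescaling from producing a mountain-pass profile when $p\leq 4$, and the two-parameter concentrating family $w_R$ is engineered precisely to turn this obstruction into a lower-order correction in the regime $p>3$.
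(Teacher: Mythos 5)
Your proof is correct, and the overall strategy coincides with the paper's: item (\ref{MPii}) is handled identically (drop the non-negative nonlocal term and apply Sobolev), and item (\ref{MPiii}) is obtained via a rescaling family for $p\in(3,6)$ and by perturbing the $q=0$ functional for $p\in(2,3]$. The differences are in the details and are worth recording. For $p\in(3,6)$ the paper fixes the single curve $u_\tau=\tau^2u(\tau\cdot)$ (your $\alpha=2$, $\beta=1$, which satisfies your two linear constraints precisely when $p>3$) and bounds the kernel by the Coulomb one, $\mathcal{K}\le|\cdot|^{-1}$, rather than by $1/a$; your bound (which, incidentally, is established in the proof of Lemma \ref{lemfundsol}, not in Lemma \ref{lem31}) is perfectly valid and even relaxes the solvability constraint to $p>14/5$, but it introduces the factor $1/a$ into the estimate, so the threshold for $R$ and hence the resulting $w=w_R$ depend on $a$. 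The paper deliberately avoids this: Remark \ref{rem:seserve1} records that $\delta$, $\rho$ and $w$ are independent of both $q$ and $a$, and this uniformity is exploited in Section \ref{sec:Limite} when letting $a\to0$; if you keep your kernel bound you lose that remark for $p\in(3,6)$. For $p\in(2,3]$ your argument (choose $t_0$ with $\J_q|_{q=0}(t_0u_0)<-1$, then shrink $q$ using the exact identity for the quartic term) is a cleaner, more elementary rendering of the paper's, which runs the Ruiz-type curve $\tau^{p/(p-2)}u(\tau\cdot)$ and compares the four exponents as in \cite{RuizJFA}; both reduce to the observation that the nonlocal term carries a factor $q^2$ and can be absorbed for small $q$. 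In short, everything checks out; just correct the reference for $\mathcal{K}\le 1/a$ and be aware of the trade-off in $a$-uniformity.
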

\begin{proof}
Property (\ref{MPi}) is trivial.\\
Moreover, since $\phi_u = \mathcal{K}*u^2\geq 0$, by Sobolev inequality we have
\[
\J(u)
%	&=
%	\frac{1}{2}\|\nabla u\|_{2}^{2} + \frac{\omega}{2}\|u\|_{2}^{2} +\frac{q^2}{4}\int \phi_{u}u^{2} - \frac{1}{p}\|u\|_{p}^{p}\\
%&\geq
%\frac{1}{2}\|\nabla u\|_{2}^{2} + \frac{\omega}{2}\|u\|_{2}^{2} +\frac{q^2}{4}\int \phi_{u}u^{2} - \frac{1}{p}\|u\|_{p}^{p}\\
\geq
C_1 \|u\|^2 - C_2 \|u\|^p
\]
and so, if we take $\rho>0$ small enough, we get (\ref{MPii}).\\
To prove (\ref{MPiii}), we  fix  $u\neq0$ in $H^{1}(\mathbb R^{3})$ and  distinguish two cases:\\
%{\bf Case 1:} $p\in(4,6)$.\\
%Let $u_\tau=\tau u$. We have
%	\[
%	\J_q(u_\tau)
%	=
%	\frac{\tau^2}{2}\|\nabla u\|_{2}^{2}
%	+ \frac{\tau^2}{2}\omega\|u\|_{2}^{2}
%	+q^2\frac{\tau^4}{4}\int \phi_{u}u^{2}
%	- \frac{\tau^p}{p}\|u\|_{p}^{p}
%	\]
%	and the conclusion easily follows considering $\tau\to+\infty$.\\
{\bf Case 1:} $p\in(3,6)$.\\	
Let $u_\tau=\tau^2 u(\tau \cdot)$. 
	We have
	\begin{align*}
	\J_q(u_\tau)
	&=
	\frac{\tau^{3}}{2}\|\nabla u\|_{2}^{2}
	+ \frac{\tau}{2}\omega\|u\|_{2}^{2}
	+q^2\frac{\tau^3 }{4}\iint \frac{1-e^{-\frac{|x -y|}{\tau a}}}{|x -y|}u^2(x)u^2(y)
	- \frac{\tau^{2p-3}}{p}\|u\|_{p}^{p}\\
	&\leq
	\frac{\tau^{3}}{2}\|\nabla u\|_{2}^{2}
	+ \frac{\tau}{2}\omega\|u\|_{2}^{2}
	+q^2\frac{\tau^3 }{4}\iint \frac{u^2(x)u^2(y)}{|x -y|}
	- \frac{\tau^{2p-3}}{p}\|u\|_{p}^{p}
	\end{align*}
	and the conclusion easily follows considering $\tau\to+\infty$. Actually, if $p\in(4,6)$, the simpler curve $u_\tau=\tau u$ also works.\\
%{\bf Case 3:} $p\in(3,4)$.\\
%	Let $u_\tau=\tau^\frac{5-p}{4-p} u(\tau \cdot)$.
%	We have
%	\[%\alpha=\frac{2(p-3)}{(4-p)(p-2)}
%	\J_q(u_\tau)
%	\leq
%	\frac{\tau^{\frac{6-p}{4-p}}}{2}\|\nabla u\|_{2}^{2}
%	+ \frac{\tau^{\frac{p-2}{4-p}}}{2}\omega\|u\|_{2}^{2}
%	+q^2\frac{\tau^{\frac{p}{4-p}} }{4} \iint \frac{u^2(x)u^2(y)}{|x -y|}dxdy
%	- \frac{\tau^{\frac{(p-2)(6-p)}{4-p}}}{p}\|u\|_{p}^{p}
%	\]
%	and, since $\max\{\frac{6-p}{4-p},\frac{p-2}{4-p},\frac{p}{4-p}, \frac{(p-2)(6-p)}{4-p}\}=\frac{(p-2)(6-p)}{4-p}$, the conclusion easily follows considering $\tau\to+\infty$.\\
%{\bf Case 2:} $p=3$.\\
%Let $u_\tau=A\tau^2 u(\tau \cdot)$ with $A>0$.
%We have
%\[
%\J_q(u_\tau)
%\leq
%\frac{A^2}{2}\tau^{3}\|\nabla u\|_{2}^{2}
%+ \frac{A^2}{2}\omega\tau\|u\|_{2}^{2}
%+q^2\frac{A^4}{4} \tau^{3} \iint \frac{u^2(x)u^2(y)}{|x -y|}dxdy
%- \frac{A^3}{3}\tau^{3} \|u\|_{3}^{3}
%\]
%and, taking $A$ large enough such that
%\[
%\frac{1}{2}\|\nabla u\|_{2}^{2} < \frac{A}{3}\|u\|_{3}^{3}
%\]
%and $q$ small enough such that
%\[
%q^2\frac{A}{4}  \iint \frac{u^2(x)u^2(y)}{|x -y|}dxdy<\frac{1}{3} \|u\|_{3}^{3}, 
%\]
%the conclusion easily follows considering $\tau\to+\infty$.\\
{\bf Case 2:} $p\in(2,3]$.\\
Let $u_\tau=\tau^\frac{p}{p-2} u(\tau \cdot)$. We have 
\begin{equation}
\label{numero}
%\alpha=\frac{2(p-3)}{(4-p)(p-2)}
\J_q(u_\tau)
\leq
\frac{\tau^{\frac{p+2}{p-2}}}{2}\|\nabla u\|_{2}^{2}
+ \frac{\tau^{\frac{6-p}{p-2}}}{2}\omega\|u\|_{2}^{2}
+q^2\frac{\tau^{\frac{10-p}{p-2}} }{4} \iint \frac{u^2(x)u^2(y)}{|x -y|}
- \frac{\tau^{\frac{p^2-3p+6}{p-2}}}{p}\|u\|_{p}^{p}
\end{equation}
and since
	\[
	\frac{6-p}{p-2}
	<
	\frac{p+2}{p-2}
	<
	\frac{p^2-3p+6}{p-2}
	<
	\frac{10-p}{p-2},
	\]
	arguing as in \cite[Corollary 4.4]{RuizJFA} we have that, if $q=0$, the right hand side of \ref{numero} is unbounded from below (considering $\tau\to+\infty$) and thus, for $q$ small enough, its infimum is strictly negative and we conclude.
\end{proof}

\begin{Rem}\label{rem:seserve1}
We observe explicitly that $\delta, \rho $ and $w$ do not depend on $q$, neither on $a$;
indeed the term involving these two parameters has been successfully thrown away.
\end{Rem}

\section{Existence for small $q$'s in the case $p\in(2,6)$}\label{sec4}

In this section we prove an existence result for small $q$ and $p\in(2,6)$. Actually, as we will see in the next section, such a result will be improved in the case $p\in(3,6)$: we will be able to find solutions of \eqref{eq:ScBP} for all $q\neq 0$.

Let us consider, for every $T>0$,  the truncated functional
\[
\mathcal J_{q,T}(u) 
:= 
\frac{1}{2}\|\nabla u\|_{2}^{2} + \frac{\omega}{2}\|u\|_{2}^{2} +\frac{q^2}{4}K_T(u)\iint \frac{1-e^{-\frac{|x -y|}{a}}}{|x -y|} u^2(x)u^{2}(y) - \frac{1}{p}\|u\|_{p}^{p}
\]
where
\[
K_T(u):=\chi\left(\frac{\|u\|^2}{T^2}\right)
\]
and $\chi\in C_0^\infty(\R,\R)$ satisfies
\[
\chi(s):=
\begin{cases}
\chi(s)=1 & \hbox{ for } s\in[0,1],\\
0\leq\chi\leq 1 & \hbox{ for } s\in[1,2],\\
\chi(s)=0 & \hbox{ for } s\in[2,+\infty[,\\
\|\chi'\|_\infty \leq 2.
\end{cases}
\]
Observe that
\begin{align*}
\mathcal J'_{q,T}(u)[u]
&=
\|\nabla u\|_{2}^{2}
+ \omega\|u\|_{2}^{2} 
+q^2K_T(u)\iint \frac{1-e^{-\frac{|x -y|}{a}}}{|x -y|} u^2(x)u^{2}(y)\\
&\qquad
+\frac{q^2}{2T^2} \chi'\left(\frac{\|u\|^2}{T^2}\right) \|u\|^2 \iint \frac{1-e^{-\frac{|x -y|}{a}}}{|x -y|} u^2(x)u^{2}(y)
- \|u\|_{p}^{p}
\end{align*}

Arguing as in Lemma \ref{Lem:MP} we have
\begin{Lem}
	\label{lemMPGT}
	The functional $\mathcal{J}_{q,T}$ satisfies the geometric assumption of the Mountain Pass Theorem,
	namely:
	\begin{enumerate}[label=(\roman*),ref=\roman*]
		\item \label{MPTG1}$\mathcal{J}_{q,T}(0)=0$;
		\item \label{MPTG2} there exist $\delta,\rho>0$ such that for all $u\in H^1(\R^3)$ with $\|u\|=\rho$, $\mathcal J_{q,T}(u) \geq \delta$;
		\item \label{MPTG3} there exists $w\in H^1(\R^3)$ with $\|w\|>\rho$ such that $\mathcal J_{q,T}(w)<0$. 
	\end{enumerate}
\end{Lem}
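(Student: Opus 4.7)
The plan is to follow the same outline as Lemma \ref{Lem:MP}, but exploit the cutoff $K_T$ to simplify the argument for the third item, which will work uniformly in $p\in(2,6)$.

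For \eqref{MPTG1}, I would simply observe that $\J_{q,T}(0)=0$: all the norm terms vanish and, although $K_T(0)=\chi(0)=1$, the nonlocal double integral is zero. For \eqref{MPTG2}, the point is that the truncation $K_T(u)$ is nonnegative and the double integral
$\iint \mathcal{K}(x-y)u^2(x)u^2(y)\,dxdy$ is nonnegative since $\mathcal{K}\geq 0$. Thus regardless of the value of $K_T(u)$ we can discard the $q^2$-term and estimate
\[
\J_{q,T}(u)\;\geq\;\frac12\|u\|^2-\frac1p\|u\|_p^p\;\geq\;C_1\|u\|^2-C_2\|u\|^p
\]
by the Sobolev embedding $H^1(\R^3)\hookrightarrow L^p(\R^3)$ (valid since $p\in(2,6)$). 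Choosing $\rho>0$ small enough, we obtain $\J_{q,T}(u)\geq\delta>0$ on the sphere $\|u\|=\rho$, uniformly in $T$ and $q$.

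For \eqref{MPTG3}, the truncation actually makes life easier than in Lemma \ref{Lem:MP}. Fix any nontrivial $v\in H^{1}(\R^3)$ and consider the simple curve $w_t=tv$. As soon as $t^2\|v\|^2>2T^2$, we have $\|w_t\|^2/T^2\geq 2$ and hence $K_T(w_t)=\chi(\|w_t\|^2/T^2)=0$. For such $t$ the functional reduces to
\[
\J_{q,T}(tv)=\frac{t^2}{2}\|v\|^2-\frac{t^p}{p}\|v\|_p^p,
\]
which tends to $-\infty$ as $t\to+\infty$ since $p>2$. Taking $t$ sufficiently large we get both $\|tv\|>\rho$ and $\J_{q,T}(tv)<0$, so we set $w:=tv$ for such a $t$.

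The main (and essentially only) subtlety is the interplay between $\rho$ and $T$: we need $\rho$ from \eqref{MPTG2} to be genuinely less than the norm at which \eqref{MPTG3} kicks in, which is automatic since \eqref{MPTG3} produces $w$ of arbitrarily large norm. Note that $\delta,\rho$ do not depend on $T$ or $q$, a fact which will matter later when we let the truncation parameter grow; compare Remark \ref{rem:seserve1}.
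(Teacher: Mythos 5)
Your proposal is correct and follows essentially the same route as the paper: both arguments discard the nonnegative nonlocal term for the lower bound on the sphere, and both exploit that the cutoff $K_T$ vanishes along the ray $t\mapsto tv$ once $t^2\|v\|^2>2T^2$, so the functional reduces to $\frac{t^2}{2}\|v\|^2-\frac{t^p}{p}\|v\|_p^p$ and goes to $-\infty$ for every $p\in(2,6)$. The only cosmetic difference is that the paper normalizes $\|\psi\|=1$; your observation that $\delta,\rho$ are independent of $q$, $a$, and $T$ matches Remark \ref{rem:seserve2}.
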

\begin{proof}
	Property (\ref{MPTG1}) is trivial.\\
	By Sobolev inequality,
	\[
	\mathcal J_{q,T}(u) 
	\geq 
	\frac{1}{2}\|\nabla u\|_{2}^{2} 
	+ \frac{\omega}{2}\|u\|_{2}^{2} 
	%+\frac{q}{4}K_T(u)\iint \frac{1-e^{-\frac{|x -y|}{a}}}{|x -y|} u^2(x)u^{2}(y) 
	- \frac{1}{p}\|u\|_{p}^{p}
	\geq
	\frac{1}{2}\| u\|^{2} 
	- C\|u\|^{p}
	\]
	an so, taking $\rho$ small enough, we get (\ref{MPTG2}).\\
	Finally, let us fix $\psi\in C_0^\infty(\R,\R)$ with $\|\psi\|=1$ and consider $\psi_t:= t\psi$ for $t>0$. If $t$ is sufficiently large, then
	\[
	\mathcal J_{q,T}(\psi_t) 
	= 
	\frac{t^2}{2}
	- \frac{t^p}{p}\|\psi\|_{p}^{p}<0
	\]
	and we get (\ref{MPTG3}).
\end{proof}

\begin{Rem}\label{rem:seserve2}
As in Remark \ref{rem:seserve1} we have that $\delta,\rho$ and $w$ do not depend on $q,a,T$.
\end{Rem}

In virtue of the above Lemma we can define the Mountain Pass level for $\mathcal J_{q,T}$, namely
\[
c_{q,T}:=\inf_{\gamma\in\Gamma_{q,T}}\max_{t\in[0,1]} \mathcal J_{q,T}(\gamma(t))>0,
\]
where $
\Gamma_{q,T}
:=
\left\{\gamma \in C([0,1],H^1(\R^3)):  \gamma(0)=0, \mathcal J_{q,T}(\gamma(1))<0 \right\}.
$
By the Ekeland Variational Principle 
(see also \cite{Willem}) there exists a (PS) sequence $\{u_n\}\subset H^1(\R^3)$ for $\mathcal J_{q,T}$ at level $c_{q,T}$. \\
%namely such that
%\[
%\mathcal J_{q,T} (u_n) \to c_{q,T},
%\quad
%\mathcal J'_{q,T} (u_n) \to 0 \hbox{ in } H^{-1}
%\quad
%\hbox{ as } n\to+\infty.
%\]
We can define also
\[
c_{q}:=\inf_{\gamma\in\Gamma_q}\max_{t\in[0,1]} \mathcal J_{q}(\gamma(t))>0,
\]
$\Gamma_q
:=
\left\{\gamma \in C([0,1],H^1(\R^3)):  \gamma(0)=0, \mathcal J_{q}(\gamma(1))<0 \right\}$,
the Mountain Pass level associated to $\mathcal J_{q}$.
Since $\mathcal J_{q,T} \leq \mathcal{J}_q$, we have that $c_{q,T} \leq c_{q}$.\\

We show now that, for a suitable $\overline T>0$, we have that $\|u_{n}\|\leq \overline T$ and then,
being $K_{\overline T}(u_{n})=1$, $\{u_{n}\}$
is also a (PS) sequence for the untruncated functional $\mathcal J_{q}$,
at least for small values of $q$.

%We want to prove that $\{u_n\}$ is also a Palais-Smale sequence fo $\mathcal{J}_q$ if $T$ is large enough and $q$ small enough. To this end it is enough to  show that $K_T(u_n)=1$, that holds if $\|u_n\|\leq T$. In fact we  prove the following result.
\begin{Lem}\label{lem:Tbarra}
	There exists $\overline{T}>0$ independent on $q$ and $q_{*}:=q(\overline{T})>0$ such that if $q<q_{*}$, then
	\[
	\limsup_{n} \|u_n\| \leq \overline{T}.
	\]
\end{Lem}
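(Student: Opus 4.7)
The plan is a two--parameter bootstrap: first establish an inequality of the shape
\[
\limsup_{n}\|u_n\|^2\le A+B\,q^2T^4
\]
with constants $A,B$ independent of $q,a,T$; then choose $\overline T$ large, depending only on $A$, and $q_*$ small enough that $B\,q_*^2\overline T^4$ becomes a negligible correction to $\overline T^2$. Two ingredients drive the argument: (i) the elementary estimate
\[
G(u):=\iint \frac{1-e^{-|x-y|/a}}{|x-y|}u^2(x)u^2(y)\le \iint\frac{u^2(x)u^2(y)}{|x-y|}\le C\|u\|^4,
\]
which follows from Hardy--Littlewood--Sobolev together with the Sobolev embedding $H^1\hookrightarrow L^{12/5}$; and (ii) the compact support of $\chi$ and $\chi'$, so that any term containing the factor $\chi(\|u_n\|^2/T^2)$ or $\chi'(\|u_n\|^2/T^2)$ is automatically localized to $\|u_n\|\lesssim T$.

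For the first inequality I would compute
\begin{align*}
\mathcal J_{q,T}(u_n)-\tfrac1p\mathcal J_{q,T}'(u_n)[u_n]
&=\bigl(\tfrac12-\tfrac1p\bigr)\|u_n\|^2+q^2\bigl(\tfrac14-\tfrac1p\bigr)K_T(u_n)G(u_n)\\
&\quad -\tfrac{q^2}{2pT^2}\,\chi'\!\Bigl(\tfrac{\|u_n\|^2}{T^2}\Bigr)\|u_n\|^2\,G(u_n).
\end{align*}
On the support of $K_T$ one has $\|u_n\|^2\le 2T^2$, hence $G(u_n)\le 4CT^4$; on the support of $\chi'$ one has in addition $\|u_n\|^2\ge T^2$, hence $\|u_n\|^2\,G(u_n)\le 8CT^6$. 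Both cutoff contributions are thus controlled in absolute value by $C_p\,q^2T^4$, giving, for $n$ large and with $\varepsilon_n\to 0$ coming from $\mathcal J_{q,T}'(u_n)\to 0$ in $H^{-1}(\R^3)$,
\[
\bigl(\tfrac12-\tfrac1p\bigr)\|u_n\|^2\le c_{q,T}+1+\tfrac{\varepsilon_n}{p}\|u_n\|+C_p\,q^2 T^4.
\]

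To bound $c_{q,T}$ I would fix $\psi\in C_c^\infty(\R^3)$ with $\|\psi\|=1$ and take $L\ge\sqrt 2\,T$ large enough that $\tfrac{L^2}{2}-\tfrac{L^p}{p}\|\psi\|_p^p<0$ (possible since $K_T(L\psi)=0$ and $p>2$), with $L\le\sqrt 2\,T+L_0$ for some $L_0=L_0(p,\psi)$. The path $\gamma(t)=Lt\psi$ then lies in $\Gamma_{q,T}$, and along it the bounds $K_T\le 1$ and $G(s\psi)=s^4G(\psi)$ yield
\[
\mathcal J_{q,T}(Lt\psi)\le \max_{s\ge 0}\Bigl(\tfrac{s^2}{2}-\tfrac{s^p}{p}\|\psi\|_p^p\Bigr)+\tfrac{q^2 L^4\,G(\psi)}{4}\le M_0+C_1\,q^2 T^4,
\]
provided $T$ exceeds a threshold $T_0=T_0(p,\psi)$. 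Hence $c_{q,T}\le M_0+C_1\,q^2 T^4$. Substituting this into the previous display and absorbing the linear $\|u_n\|$ term by Young's inequality gives $\limsup_n\|u_n\|^2\le A+B\,q^2T^4$ with $A=A(p,\psi)$, $B=B(p)$ independent of $q,a,T$.

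Finally, set $\overline T:=\max\{\sqrt{2A},\,T_0\}$, which depends only on $p$ after fixing $\psi$, and $q_*:=1/(\overline T\sqrt{2B})$. For every $q\in(0,q_*)$ and $T=\overline T$ we get $A+B\,q^2\overline T^4\le \overline T^2/2+\overline T^2/2=\overline T^2$, so $\limsup_n\|u_n\|\le\overline T$, as claimed. The main obstacle is the first step: the bootstrap closes only because the two cutoff contributions end up as $q^2 T^4$ instead of scaling with $\|u_n\|^2$, which is exactly forced by the compact support of $\chi'$ together with the $L^4$--growth bound on $G$.
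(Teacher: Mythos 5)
Your proof is correct and follows essentially the same route as the paper's: the same test quantity $\mathcal J_{q,T}(u_n)-\tfrac1p\mathcal J'_{q,T}(u_n)[u_n]$, the same use of the compact supports of $\chi$ and $\chi'$ to reduce every cutoff contribution to $O(q^2T^4)$, and the same path-based bound $c_{q,T}\le C_1+C_2q^2T^4$. The only difference is organizational: you run the estimate directly and close the bootstrap with explicit choices of $\overline T$ and $q_*$, whereas the paper argues by contradiction from $\limsup_n\|u_n\|>T$; both are fine.
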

\begin{proof}
	Assume by contradiction that
	\begin{equation}
	\label{limsupT}
	\limsup_{n} \|u_n\| >T.
	\end{equation}
	Since
	\begin{align*}
	p \mathcal J_{q,T}(u_n) 
	- \mathcal J'_{q,T}(u_n)[u_n]
	&=
	\left(\frac{p}{2}-1\right)\|u_n\|^{2} 
	+\left(\frac{p}{4}-1\right) q^2K_T(u_n)\iint \frac{1-e^{-\frac{|x -y|}{a}}}{|x -y|} u_n^2(x)u_n^{2}(y)\\
	&\qquad
	- \frac{q^2}{2T^2} \chi'\left(\frac{\|u_n\|^2}{T^2}\right) \|u_n\|^2 \iint \frac{1-e^{-\frac{|x -y|}{a}}}{|x -y|} u_n^2(x)u_n^{2}(y)
	\end{align*}
	and so
	\begin{equation}
	\label{partenza}
	\begin{split}
	\left(\frac{p}{2}-1\right)\| u_n\|^{2}
	- \|\mathcal J'_{q,T}(u_n)\| \|u_n\|
	&\leq
	\left(\frac{p}{2}-1\right)\|u_n\|^{2}
	+ \mathcal J'_{q,T}(u_n)[u_n]\\
	&=
	p \mathcal J_{q,T}(u_n)
	- \left(\frac{p}{4}-1\right) q^2K_T(u_n)\iint \frac{1-e^{-\frac{|x -y|}{a}}}{|x -y|} u_n^2(x)u_n^{2}(y)\\
	&\qquad
	+ \frac{q^2}{2T^2} \chi'\left(\frac{\|u_n\|^2}{T^2}\right) \|u_n\|^2 \iint \frac{1-e^{-\frac{|x -y|}{a}}}{|x -y|} u_n^2(x)u_n^{2}(y)\\
	&\le
	p \mathcal J_{q,T}(u_n)
	+ \left|\frac{p}{4}-1\right| q^2K_T(u_n)\iint \frac{1-e^{-\frac{|x -y|}{a}}}{|x -y|} u_n^2(x)u_n^{2}(y)\\
	&\qquad
	+ \frac{q^2}{2T^2} \left|\chi'\left(\frac{\|u_n\|^2}{T^2}\right)\right| \|u_n\|^2 \iint \frac{1-e^{-\frac{|x -y|}{a}}}{|x -y|} u_n^2(x)u_n^{2}(y).
	\end{split}
	\end{equation}
	Let $w\in H^1(\R^3)$ be as in (\ref{MPTG3}) of Lemma \ref{lemMPGT}. Since $\mathcal J_{q,T}(u_n) \to c_{q,T}$ as $n\to +\infty$, there exists $\nu\in \mathbb{N}$ such that for all $n\geq \nu$
	\begin{align*}
	\mathcal J_{q,T}(u_n)
	&\leq 2 c_{q,T}
	\leq 2 \max_{t\in[0,1]} \J_{q,T}(tw)\\
	&\leq
	2 \max_{t\in[0,1]} \left[\frac{t^2}{2} \|w\|^{2}
	- \frac{t^p}{p}\|w\|_{p}^{p}\right]
	+ \frac{q^2}{2}\max_{t\in[0,1]} \left[t^4 K_T(tw)\iint \frac{1-e^{-\frac{|x -y|}{a}}}{|x -y|} w^2(x) w^{2}(y)\right]\\
%	&\leq
%	2 \max\left\{
%	\frac{1}{2}\|\nabla u_0\|_{2}^{2} + \frac{1}{2}\omega\|u_0\|_{2}^{2} - \frac{1}{p}\|u_0\|_{p}^{p},
%	\frac{p-1}{2p}\left(\frac{1}{p}\right)^\frac{2}{p-2}\left(\frac{\|u_0\|}{\|u_0\|_p}\right)^\frac{2p}{p-2}
%	\right\}\\
%	&\qquad
%	+ \frac{q^2}{2}\max_{t\in[0,1]} \left[t^4 K_T(tu_0)\iint \frac{1-e^{-\frac{|x -y|}{a}}}{|x -y|} u_0^2(x)u_0^{2}(y)\right]\\
	&:=
	2I_1 + \frac{q^2}{2}I_2.
	\end{align*}
	Observe that $I_1>0$.\\
	If $t^2 \|w\|^2>2T^2$ then $I_2=0$ and if $t^2 \|w\|^2\leq 2T^2$, then
	\[
	I_2\leq \frac{4T^4}{\|w\|^4} \iint \frac{1-e^{-\frac{|x -y|}{a}}}{|x -y|} w^2(x) w^{2}(y).
	\] 
	Thus
	\begin{equation}
	\label{primo}
	\mathcal J_{q,T}(u_n)\leq C_1+q^2 C_2 T^4.
	\end{equation}
	Analogously, we get that
	\begin{equation}
	\label{secondo}
	K_T(u_n)\iint \frac{1-e^{-\frac{|x -y|}{a}}}{|x -y|} u_n^2(x)u_n^{2}(y)
	\leq C_3 \|u_n\|^4
	\leq 4 C_3 T^4
	\end{equation}
	and
	\begin{equation}
	\label{terzo}
	\left|\chi'\left(\frac{\|u_n\|^2}{T^2}\right)\right| \|u_n\|^2 \iint \frac{1-e^{-\frac{|x -y|}{a}}}{|x -y|} u_n^2(x)u_n^{2}(y)
	\leq
	C_4 T^6.
	\end{equation}
	Putting \eqref{primo}--\eqref{terzo} in \eqref{partenza} we have
	\[
	\left(\frac{p}{2}-1\right)\| u_n\|^{2}
	- \|\mathcal J'_{q,T}(u_n)\| \|u_n\|
	\leq
	C_5+ q^2 C_6 T^4.
	\]
	On the other hand, since $\|\mathcal J'_{q,T}(u_n)\|\to 0$ as $n\to +\infty$ and by \eqref{limsupT}, we have
	\[
	\left(\frac{p}{2}-1\right)\| u_n\|^{2}
	- \|\mathcal J'_{q,T}(u_n)\| \|u_n\|
	\geq
	C_7 T^2 - T
	\]
	and so
	\[
	C_7 T^2 - T \leq C_5 + q^2 C_6 T^4
	\]
	which gives a contradiction if $q=q(T)$ is sufficiently small and for large $T$.
\end{proof}

\begin{Rem}\label{rem:a}
Observe that the above proof shows that $\overline T$ and $q_{*}=q(\overline T)$ do not depend on $a>0$.
\end{Rem}

Hence, for every $q\in (0, q_{*})$,
we have a  bounded (PS) sequence $\{u_{n}\}$, which actually depends on $q$ and $a$,
for the functional $\mathcal J_{q,\overline T}$
at the level $c_{q,\overline T}$.

However since the bound is exactly $\overline T$,
which gives $\mathcal J_{q,\overline T}(u_{n}) = 
\mathcal J_{q}(u_{n})$ and $c_{q,\overline T} =c_{q}$, we have
\begin{equation}\label{eq:PS}
\mathcal J_{q}(u_{n})\to c_{q}>0\,, \quad \mathcal J_{q}'(u_{n})\to 0,
\text{ as }n\to+\infty.  
\end{equation}

Moreover we can assume that $u_{n}\rightharpoonup u_0$ in $H^{1}(\mathbb R^{3})$.

The next result helps us to recover the compactness of the
bounded (PS) sequence $\{u_{n}\}$ we have found.
%Now we prove the following fundamental lemma in order to get compactness of our Palais Smale sequence $\{u_n\}$ for $\mathcal{J}_q$.
For the reader convenience, we give its proof in Appendix \ref{appsplitt}.

\begin{Lem}[Splitting]	\label{lemSplitting}
	Let $\{u_{n}\}$ be a bounded (PS) sequence for $\mathcal J_{q}$ at level $d>0$ and assume that $u_{n}\rightharpoonup u_{0}$ in $H^{1}(\mathbb R^{3})$.
	Then, up to subsequences, eihter $u_{n}$ strongly converges to $u_{0}$, or there exist 
		$\ell\in\mathbb{N}$, $\{z_n^{(k)}\}\subset \R^3$ for $1\leq k\leq\ell$, $w_1,\ldots,w_\ell\in H^1(\R^3)$ 
		such that
		\begin{enumerate}[label=(\roman*),ref=\roman*]
			\item \label{splitt2} $|z_n^{(k)}|\to +\infty$ for all $1\leq k\leq\ell$ and $|z_n^{(k)}-z_n^{(h)}|\to +\infty$ for all $1\leq k\neq h\leq\ell$, as $n\to + \infty$;
			\item \label{splitt3} $w_k\neq 0$ and $\mathcal{J}_q'(w_k)=0$ for all $1\leq k\leq\ell$;
			\item \label{splitt4} $\Big\| u_n - u_0 - \sum_{k=1}^\ell w_k(\cdot+z_n^{(k)})\Big\| = o_{n}(1)$;
			\item \label{splitt5} $d=\mathcal{J}_q(u_0)+ \sum_{k=1}^\ell \mathcal{J}_q(w_k)$;
			\item \label{splitt6} $\mathcal{J}_q(u_n)=\mathcal{J}_q(u_0)+ \sum_{k=1}^\ell \mathcal{J}_q(w_k)+o_{n}(1)$.
		\end{enumerate}
\end{Lem}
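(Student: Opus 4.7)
The plan is to carry out a concentration--compactness / profile decomposition argument tailored to the nonlocal functional $\mathcal{J}_q$. Since $\{u_n\}$ is bounded in $H^1(\R^3)$, up to a subsequence $u_n\rightharpoonup u_0$, $u_n\to u_0$ in $L^r_{\mathrm{loc}}(\R^3)$ for $r\in[1,6)$, and pointwise a.e. Combining these with the weak continuity $\phi_{u_n}\rightharpoonup \phi_{u_0}$ in $\mathcal{D}$ of Lemma \ref{lem:propphi}(\ref{propphii}) and the local strong convergence of $u_n$, a standard test-function argument yields $\mathcal{J}_q'(u_0)=0$. If $u_n\to u_0$ in $H^1(\R^3)$ I am done with $\ell=0$; otherwise I set $v_n^{(1)}:=u_n-u_0\rightharpoonup 0$ and aim to show that $\{v_n^{(1)}\}$ is itself a bounded (PS) sequence at level $d-\mathcal{J}_q(u_0)$.

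The crucial step is a Brezis--Lieb decomposition for every term of $\mathcal{J}_q$ and $\mathcal{J}_q'$. The quadratic and $L^p$ parts are classical; for the Bopp--Podolsky coupling term I would expand $u_n^2=u_0^2+(v_n^{(1)})^2+2u_0v_n^{(1)}$ and invoke the weak continuity of $u\mapsto\phi_u$ together with the regularity $\phi_{u_0}\in C_0(\R^3)$ from Lemma \ref{lem:propphi}(\ref{propphiiv}) to prove
\begin{align*}
\int \phi_{u_n} u_n^2 &= \int \phi_{u_0} u_0^2 + \int \phi_{v_n^{(1)}} (v_n^{(1)})^2 + o_n(1), \\
\mathcal{J}_q'(u_n)[\varphi] &= \mathcal{J}_q'(u_0)[\varphi] + \mathcal{J}_q'(v_n^{(1)})[\varphi] + o_n(1),
\end{align*}
the second uniformly for $\varphi$ in the unit ball of $H^1(\R^3)$. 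The (PS) identity $\mathcal{J}_q'(v_n^{(1)})[v_n^{(1)}]=o_n(1)$ combined with $v_n^{(1)}\not\to 0$ then forces $\|v_n^{(1)}\|_p^p\not\to 0$, so Lions' vanishing lemma yields $\eta>0$ and $z_n^{(1)}\in\R^3$ with $\int_{B_1(z_n^{(1)})}(v_n^{(1)})^2\geq \eta$; since $v_n^{(1)}\rightharpoonup 0$, necessarily $|z_n^{(1)}|\to+\infty$. Translation invariance of $\mathcal{J}_q$, which rests on Lemma \ref{lem:propphi}(\ref{propphiii}), makes $v_n^{(1)}(\cdot+z_n^{(1)})\rightharpoonup w_1\neq 0$ a bounded (PS) sequence, hence $w_1$ is a nontrivial critical point.

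Iterating by setting $v_n^{(k+1)}:=v_n^{(k)}-w_k(\cdot-z_n^{(k)})$, a second application of Brezis--Lieb (now with translations diverging to infinity) produces the additive splittings (\ref{splitt4})--(\ref{splitt6}). The separation $|z_n^{(k)}-z_n^{(h)}|\to+\infty$ for $k\neq h$ is forced automatically, since after shifting by $z_n^{(h)}$ the previously extracted profiles drift weakly to zero while the new one does not. The procedure must terminate: every nontrivial critical point satisfies $\|w_k\|\geq \rho$ by the Mountain Pass geometry of Lemma \ref{Lem:MP}(\ref{MPii}), and a Pohozaev-type argument gives an a priori lower bound $\mathcal{J}_q(w_k)\geq \sigma>0$, so (\ref{splitt5}) forces $\ell\leq d/\sigma$.

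The main obstacle I expect is controlling the cross-terms produced by the nonlocal Brezis--Lieb splitting, in particular expressions of the form $\int (\mathcal{K}*u_0^2)\, w_k^2(\cdot-z_n^{(k)})$ and $\int (\mathcal{K}*w_h^2(\cdot-z_n^{(h)}))\, w_k^2(\cdot-z_n^{(k)})$ when $|z_n^{(k)}|$ and $|z_n^{(k)}-z_n^{(h)}|$ diverge. The non-homogeneity of $\mathcal{K}$ rules out rescaling tricks, but the pointwise bound $\mathcal{K}\leq 1/|\cdot|$ from Section \ref{secop} dominates each such cross-term by a classical Coulomb integral, which in turn vanishes in the limit by the Hardy--Littlewood--Sobolev inequality combined with the fact that translates whose centres diverge converge to zero in $L^{12/5}(\R^3)$ over any fixed bounded set. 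This is essentially the only point where the Bopp--Podolsky kernel requires extra care beyond the standard Schr\"odinger--Poisson splitting scheme.
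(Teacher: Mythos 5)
Your overall scheme is the one the paper follows: extract the weak limit (a critical point, via the weak continuity of $u\mapsto\phi_u$), establish a Brezis--Lieb splitting for the nonlocal term, use Lions' vanishing lemma to locate diverging translation centres, translate, extract a new nontrivial profile, and iterate. Your treatment of the cross-terms (dominating $\mathcal K$ by $|\cdot|^{-1}$, or by $1/a$, and exploiting local compactness against diverging translations) is also in the spirit of the paper's Lemma \ref{lem:BLforJ}.

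There is, however, a genuine gap in your termination argument. You claim that ``a Pohozaev-type argument gives an a priori lower bound $\mathcal J_q(w_k)\geq\sigma>0$'' and conclude $\ell\leq d/\sigma$ from (\ref{splitt5}). For a nontrivial critical point $w$ one only has, from $\mathcal J_q'(w)[w]=0$,
\begin{equation*}
\mathcal J_q(w)=\Bigl(\tfrac12-\tfrac1p\Bigr)\|w\|^2+\Bigl(\tfrac14-\tfrac1p\Bigr)q^2\int\phi_w w^2,
\end{equation*}
and for $p<4$ the second coefficient is negative; combining with the Poho\v zaev identity rescues positivity for $p\in(3,4)$ but not for $p\in(2,3]$, where the sign of $\mathcal J_q$ at critical points is genuinely unknown. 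Indeed the paper, when it applies the Splitting Lemma at the end of Section \ref{sec4}, explicitly allows the case $\mathcal J_q(w_k)<0$ as a separate (favourable) alternative --- so no positive lower bound on $\mathcal J_q(w_k)$, nor even on $\mathcal J_q(u_0)$, is available, and your bound $\ell\leq d/\sigma$ cannot be justified in the range $p\in(2,3]$ for which the lemma is needed. The correct termination device, used in the paper, is the \emph{norm} (not energy) orthogonality
\begin{equation*}
\Bigl\| u_n-u_0-\sum_{k=1}^{N}w_k(\cdot+z_n^{(k)})\Bigr\|^2=\|u_n\|^2-\|u_0\|^2-\sum_{k=1}^{N}\|w_k\|^2+o_n(1),
\end{equation*}
which follows from the pairwise divergence of the $z_n^{(k)}$; since every nontrivial critical point satisfies $\|w\|^2\leq\|w\|^2+q^2\int\phi_w w^2=\|w\|_p^p\leq C\|w\|^p$, hence $\|w\|\geq c>0$ (this, rather than the mountain-pass radius $\rho$, is the clean way to get the uniform lower bound you invoke), the left-hand side being nonnegative and $\{u_n\}$ bounded force the iteration to stop after finitely many steps. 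With this replacement your argument goes through for all $p\in(2,6)$.
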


Then we can easily conclude the proof of Theorem \ref{th:existence}.\\
Indeed let $\{u_{n}\}$ the bounded (PS) sequence for $\mathcal J_{q}$ at level $c_{q}>0$
obtained in \eqref{eq:PS}. By Lemma \ref{lemSplitting} we have the following possibilities:
\begin{itemize}
\item if $u_n \to u_0$ we have finished being $u_0$ a solution;
\item if there exists $w\in\{u_0,w_1,\ldots,w_\ell\}$ such that $\mathcal{J}_q(w)<0$ we have finished, being $w$ a nontrivial solution;
\item if $\mathcal{J}_q(u_0), \mathcal{J}_q(w_1),\ldots,\mathcal{J}_q(w_\ell)\geq 0$, by (\ref{splitt5}) in Lemma \ref{lemSplitting} we have that
\[
c_{q}= \mathcal{J}_q(u_0)+ \sum_{k=1}^\ell \mathcal{J}_q(w_k)>0
\]
and we conclude.
\end{itemize}
From now on we will denote a generic solution by $\mathfrak{u}$.

%{\color{red}\begin{Rem}\label{rem:Tbound}
%For future reference we observe that the solution $\mathfrak u$
%satisfies $\|\mathfrak u\|\leq \overline T$,
%where $\overline T$ is given in Lemma \ref{lem:Tbarra}.
%\end{Rem}}

\section{Existence for all $q$'s in the case $p\in(3,6)$}\label{sec:allq}
In this section we prove the existence of solutions of  \eqref{eq:ScBP} for every $q\neq 0$, but only for $p$ {\em large}, and, as we said in the Introduction, for radial symmetric functions.  Observe that if $u$ is radial, also $\phi_{u}$ is.

First we give some convergence properties in the radial setting recalling that $H_r^1(\mathbb{R}^3)$ is compactly embedded in $L^s(\R^3)$ for $s\in(2,6)$ by the celebrated Strauss Lemma \cite{strauss}.
\begin{Lem}\label{lemphi}
	If $u_n \rightharpoonup u$ in $H_r^1(\mathbb{R}^3)$, then 
	\begin{enumerate}[label=(\roman*),ref=\roman*]
		%\item \label{lemphi1}$\phi_{u_{n}}\to\phi_{u}$ in $L^6(\R^3)$; \textcolor{red}{ e' incluso in (ii)}
		\item \label{lemphi2}$\phi_{u_{n}} \to \phi_u$ in $\mathcal{D}$;
		\item \label{lemphi3}$\displaystyle\int \phi_{u_{n}} u_{n}^{2} \to \int \phi_{u} u^{2}$.
	\end{enumerate}
\end{Lem}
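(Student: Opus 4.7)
The plan is to reduce both statements to the Strauss compact embedding $H_r^1(\R^3) \hookrightarrow L^s(\R^3)$ for $s \in (2,6)$, which yields in particular $u_n \to u$ strongly in $L^{12/5}(\R^3)$. From this I will get $u_n^2 \to u^2$ in $L^{6/5}(\R^3)$ via the factorization
\[
\|u_n^2 - u^2\|_{6/5} \leq \|u_n - u\|_{12/5}\,\|u_n + u\|_{12/5},
\]
where the second factor is bounded since $\{u_n\}$ is bounded in $H^1$ and hence in $L^{12/5}$. This is the only place where radial symmetry is essential; everything else is pure duality in $\mathcal{D}$.

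For (i), I would test the linear equation satisfied by the difference. By the variational characterization of $\phi_w$, for every $\psi \in \mathcal{D}$
\[
\langle \phi_{u_n} - \phi_u, \psi \rangle_{\mathcal{D}} = 4\pi \int (u_n^2 - u^2)\psi.
\]
Choosing $\psi = \phi_{u_n} - \phi_u \in \mathcal{D}$ and applying Hölder with exponents $6/5$ and $6$, together with the continuous embedding $\mathcal{D} \hookrightarrow D^{1,2}(\R^3) \hookrightarrow L^6(\R^3)$, gives
\[
\|\phi_{u_n} - \phi_u\|_{\mathcal{D}}^{2} \leq 4\pi\, \|u_n^2 - u^2\|_{6/5}\,\|\phi_{u_n} - \phi_u\|_{6} \leq C\,\|u_n^2 - u^2\|_{6/5}\,\|\phi_{u_n} - \phi_u\|_{\mathcal{D}}.
\]
Dividing by $\|\phi_{u_n} - \phi_u\|_{\mathcal{D}}$ (the case when it vanishes being trivial) and invoking the $L^{6/5}$ convergence of $u_n^2$ concludes (i).

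For (ii), the quickest route is to notice that testing the second equation of \eqref{eq:ScBP} with $\phi_u$ itself yields
\[
\int \phi_u u^2 = \frac{1}{4\pi} \|\phi_u\|_{\mathcal{D}}^{2},
\]
and similarly for $u_n$. Then (ii) is an immediate consequence of (i) through norm continuity. Alternatively, and perhaps more transparently, one can split
\[
\int \phi_{u_n} u_n^2 - \int \phi_u u^2 = \int \phi_{u_n}(u_n^2 - u^2) + \int (\phi_{u_n} - \phi_u) u^2,
\]
bound the first term by $\|\phi_{u_n}\|_6 \|u_n^2 - u^2\|_{6/5}$ (with $\|\phi_{u_n}\|_6$ uniformly bounded by Lemma \ref{lem:propphi}\eqref{propphivii}) and the second by $\|u\|_{12/5}^{2}\,\|\phi_{u_n} - \phi_u\|_6$; both tend to zero, the first by the $L^{6/5}$ convergence, the second by (i).

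I do not expect a serious obstacle here: the only nontrivial input is the compactness in $L^{12/5}$ afforded by radial symmetry, and the rest is a standard energy estimate exploiting the fact that $\phi_w$ is defined precisely as the Riesz representative of the linear form $\psi \mapsto 4\pi\int w^2 \psi$ on $\mathcal{D}$. One small point to verify is that $u_n^2 - u^2 \in \mathcal{D}'$, which is immediate from $\|u_n^2 - u^2\|_{6/5} < \infty$ paired with the embedding $\mathcal{D}\hookrightarrow L^6$.
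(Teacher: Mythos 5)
Your proposal is correct and follows essentially the same route as the paper: the paper identifies $\phi_{u_n}$ and $\phi_u$ as Riesz representatives of the functionals $\varphi\mapsto\int\varphi u_n^2$ and $\varphi\mapsto\int\varphi u^2$ on $\mathcal D$ and bounds $\|\phi_{u_n}-\phi_u\|_{\mathcal D}=\|{\rm T}_n-{\rm T}\|_{\mathcal D'}\leq C\|u_n^2-u^2\|_{6/5}$, which is exactly your computation with the test function $\psi=\phi_{u_n}-\phi_u$, the compactness input being the same Strauss embedding giving $u_n^2\to u^2$ in $L^{6/5}(\R^3)$. Part (ii) is likewise obtained in the paper from (i) and H\"older, matching your splitting argument.
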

\begin{proof}
%	Since $u_n\to u$ in $L^{12/5}(\R^3)$, we have that $u_n^2\to u^2$ in $L^{6/5}(\R^3)$ and again by \cite[Formula (9), p. 107]{LL}
%	\[
%	\|\phi_{u_{n}} - \phi_{u}\|_6=\|\mathcal{K}*u_n^2-\mathcal{K}*u^2\|_6
%	\leq
%	C \|u_n^2-u^2\|_{6/5}
%	\]
%	and we get (\ref{lemphi1}).\\
	To prove (\ref{lemphi2}) we define the linear and continuous operators on $\mathcal{D}$
	$${\rm T}_{n}(\varphi)=\int \varphi u_{n}^{2} \quad \textrm{ and } \quad {\rm T}(\varphi)=\int \varphi u^{2}$$
	represented, by the Riesz Theorem, respectively by $\phi_{u_{n}} $ and $\phi_u$.
	Then, by the H\"older inequality,
	$$
	\|\phi_{u_{n}} - \phi_u\|_\mathcal{D}=
	\|{\rm T}_{n} - {\rm T}\|_{\mathcal{D}'}
	\leq C \|u_{n}^{2} - u^{2}\|_{6/5}
	%\leq \sup_{\varphi\neq0} \|u_{n}^{2} - u^{2}\|_{6/5} \|\varphi\|_{6}
	%\leq \sup_{\varphi\neq0}\left(\int |u_{n}^{2} - u^{2}|^{6/5}\right)^{5/6} \|\varphi\|_{D}
	\to 0.$$
	Moreover from (\ref{lemphi2}) and the H\"older inequality we easily get (\ref{lemphi3}).
\end{proof}

\subsection{The case $p\in[4,6)$.}
In this case any (PS) sequence for $\mathcal J_q$ is bounded. In fact, if $\{u_{n}\}\subset H^{1}(\mathbb R^{3})$ is a (PS) sequence, that is
	$$|\J_q(u_{n})|\leq M, \quad \J_q'(u_{n})=o_n(1),$$
	then, 
		\[
		pM + c\|u_{n}\|\geq p\J_q(u_{n}) - \J_q'(u_{n})[u_{n}]
		=\frac{p-2}{2}\|u\|^{2}+q^2\frac{p-4}{4}\int \phi_{u}u^{2}
		%&\geq \frac{p-2}{2}\|\nabla u\|_{2}^{2} + \frac{p-2}{2}\omega\|u\|_{2}^{2}\\
		\geq C\|u_{n}\|^{2},
		\]
	from which the boundedness of $\{u_{n}\}$ follows.

The next Lemma is standard, since we have compactness.
\begin{Lem}\label{prop:PS}
	Let $p\in(2,6)$. Any bounded sequence $\{u_{n}\}\subset H^{1}_{r}(\mathbb R^{3})$
	such that $\mathcal \J_{q}'(u_{n})\to 0$
	has a convergent subsequence.
\end{Lem}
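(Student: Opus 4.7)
The plan is standard: exploit the radial compactness (Strauss) and the strong continuity properties of $\phi_u$ collected in Lemma \ref{lemphi} to upgrade weak convergence to strong convergence in $H^1_r(\mathbb{R}^3)$.

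First, since $\{u_n\}$ is bounded in $H^1_r(\mathbb{R}^3)$, I would extract a subsequence (still denoted $\{u_n\}$) such that $u_n \rightharpoonup u$ weakly in $H^1_r(\mathbb{R}^3)$. By the Strauss compactness theorem, $u_n \to u$ strongly in $L^s(\mathbb{R}^3)$ for every $s \in (2,6)$.

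Next, the standard trick: test $\mathcal{J}'_q(u_n)$ against $u_n - u$. Since $u_n - u$ is bounded in $H^1$ and $\mathcal{J}'_q(u_n) \to 0$ in $H^{-1}$, one has
\[
\mathcal{J}'_q(u_n)[u_n - u] = \langle u_n, u_n - u\rangle + q^2 \int \phi_{u_n} u_n (u_n - u) - \int |u_n|^{p-2} u_n (u_n - u) \;\to\; 0.
\]
The nonlinear term vanishes by the strong $L^p$ convergence: $\{|u_n|^{p-2}u_n\}$ is bounded in $L^{p'}$ and $u_n - u \to 0$ in $L^p$. For the nonlocal term, Lemma \ref{lem:propphi}\eqref{propphivii} gives $\|\phi_{u_n}\|_6 \leq C$, so by H\"older $\|\phi_{u_n} u_n\|_{3/2} \leq C$, and the factor $u_n - u$ tends to zero in $L^3$ (since $3 \in (2,6)$); hence $\int \phi_{u_n} u_n (u_n - u) \to 0$. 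One may alternatively invoke Lemma \ref{lemphi}\eqref{lemphi3} directly together with weak/strong convergence splitting.

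Combining these, $\langle u_n, u_n - u\rangle \to 0$, while weak convergence yields $\langle u, u_n - u\rangle \to 0$. Subtracting,
\[
\|u_n - u\|^2 = \langle u_n - u, u_n - u\rangle = \langle u_n, u_n - u\rangle - \langle u, u_n - u\rangle \;\to\; 0,
\]
so $u_n \to u$ strongly in $H^1_r(\mathbb{R}^3)$. The only step that requires a small care is controlling the nonlocal term $\int \phi_{u_n} u_n (u_n - u)$; once the $L^6$-bound on $\phi_{u_n}$ is in hand, this is routine, so I do not anticipate any real obstacle.
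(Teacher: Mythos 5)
Your proof is correct, and all the estimates check out: the bound $\|\phi_{u_n}u_n\|_{3/2}\leq \|\phi_{u_n}\|_6\|u_n\|_2\leq C$ together with $u_n-u\to 0$ in $L^3(\R^3)$ (Strauss) kills the nonlocal term, and the $L^{p'}$--$L^p$ pairing kills the nonlinear one, so $\|u_n-u\|^2=\langle u_n,u_n-u\rangle-\langle u,u_n-u\rangle\to 0$. The route is genuinely different from the paper's, though built on the same ingredients. The paper does not test $\J_q'(u_n)$ against $u_n-u$; instead it introduces the Riesz isomorphism $\mathrm{R}=-\Delta+\omega\mathrm{I}:H^1_r(\R^3)\to H^{-1}_r(\R^3)$, rewrites the equation as $u_n=-q^2\mathrm{R}^{-1}(\phi_{u_n}u_n)+\mathrm{R}^{-1}(|u_n|^{p-2}u_n)+o_n(1)$, and concludes by observing that $\{\phi_{u_n}u_n\}$ and $\{|u_n|^{p-2}u_n\}$ are bounded in $L^{3/2}_r(\R^3)$ and $L^{p'}_r(\R^3)$ respectively, which embed compactly into $H^{-1}_r(\R^3)$ by duality; hence each term on the right converges in $H^1_r(\R^3)$. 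The paper's argument has the advantage of not needing to identify the limit first (convergence of the right-hand side is extracted directly from compact dual embeddings), whereas yours is more elementary in that it only uses H\"older's inequality and the Strauss lemma, with no appeal to compactness of embeddings into negative-order spaces. One minor caveat: your aside that one ``may alternatively invoke Lemma \ref{lemphi}\eqref{lemphi3} directly'' is not quite immediate, since that item concerns $\int\phi_{u_n}u_n^2$ rather than the mixed term $\int\phi_{u_n}u_n(u_n-u)$; your primary H\"older argument is the right one and needs no such substitute.
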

\begin{proof}
	We can assume, up to subsequence,
	that $u_{n}\rightharpoonup u$ in $H^{1}_{r}(\mathbb R^{3})$ and $u_{n}\to u$ in $L^{p}(\mathbb R^{3})$
	for $p\in (2,6)$. By Lemma \ref{lemphi} we have also that $\phi_{u_{n}}\to \phi_{u}$ in $\mathcal{D}$.
	By defining the Riesz  isomorphism 
	$\textrm{R}=-\Delta+\omega\textrm{I}:H^{1}_{r}(\mathbb R^{3})\to H^{-1}_{r}(\mathbb R^{3})$,
	by 
	$$\textrm R(u_{n}) +q^2 \phi_{u_{n}} u_{n}= |u_{n}|^{p-2}u_{n} +o_n(1) $$
	we have
	$$
	u_{n} = -q^2 \textrm{R}^{-1}(\phi_{u_{n}}u_{n} )+\textrm{R}^{-1}(|u_{n}|^{p-2}u_{n})+o_n(1).
	$$
	It is standard to see that each term in the right hand side is convergent in $H^{1}_{r}(\mathbb R^{3})$,
	however, for the reader's convenience, we give a short proof.
	Observe that
	$$\|\phi_{u_{n}} u_{n}\|_{3/2}\leq \|u_{n}\|_{2}\|\phi_{u_{n}}\|_{6}\leq \|u_{n}\| \|\phi_{u_{n}}\|_{\mathcal{D}}\leq C$$
	and, since by duality, $L_{r}^{3/2}(\mathbb R^{3})\hookrightarrow\hookrightarrow H_{r}^{-1}(\mathbb R^{3})$ we deduce
	$\{\phi_{u_{n}}u_{n}\}$ is convergent in $H_{r}^{-1}(\mathbb R^{3})$, and consequently 
	$\textrm{R}^{-1}(\phi_{u_{n}}u_{n} )$ also is.
	Analogously,
	\[
	\||u_{n}|^{p-2}u\|_{p'} = \|u_{n}\|^{p-1}_{p}\leq C \| u_{n}\|^{p-1} \leq C
	\]
	and again the compact embedding of  $L_{r}^{p'}(\mathbb R^{3})$ into $H_{r}^{-1}(\mathbb R^{3})$
	guarantees that  $\{|u_{n}|^{p-2}u_{n}\}$ is convergent into $H_{r}^{-1}(\mathbb R^{3})$ and so we conclude.
\end{proof}

Putting together Lemma \ref{Lem:MP}, Lemma \ref{prop:PS} and the boundedness of the (PS) sequences, the Mountain Pass Theorem allows to get a solution of \eqref{eq:ScBP}. 

%\begin{Rem}
%It is standard, also, to verify that the assumptions of the $\mathbb{Z}_2$-Mountain Pass Theorem are satisfied and so we get infinitely many solutions of \eqref{eq:ScBP}.  \textcolor{red}{controllare per quali p si ha la geometria}
%\end{Rem}

\subsection{The case $p\in(3,4)$}
To study this case, we apply  the following result
\begin{Th}[{\cite[Theorem 1.1]{JJ}}]\label{th:Jj}
	Let $X$ be a Banach space with a norm $\|\cdot\|$ and let $L\subset \mathbb R^{+}$ be an interval.
	We consider a family $\{I_{\lambda}\}$ of $C^{1}$ functionals on $X$ of the form
	$$I_{\lambda}(u)= A(u) -\lambda B(u), \quad \forall \lambda\in L,$$
	where $B(u)\geq0$ for all $u\in X$ and either $A(u)\to +\infty$ or $B(u)\to+\infty$ as $\|u\|\to +\infty$. We assume that there exist two functions $v_{1},v_{2}\in X$ such that
	$$c_{\lambda}=\inf_{\gamma\in\Gamma} \max_{t\in[0,1]}I_{\lambda}(\gamma(t))>\max\{I_{\lambda}(v_{1}), I_{\lambda}(v_{2})\},\quad \forall \lambda\in L,$$
	$\Gamma = \{\gamma\in C([0,1], X) : \gamma(0) = v_{1}, \gamma(1)=v_{2}\}$. Then for almost all $\lambda\in L$,
	there exists a bounded (PS) sequence $\{u_{n}(\lambda)\}\subset X$ of $I_{\lambda}$
	at level $c_{\lambda}$.
\end{Th}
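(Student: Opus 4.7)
The plan is to prove the statement by the so-called \emph{monotonicity trick}. First I would observe that since $B(u)\geq 0$ for all $u\in X$, the pointwise inequality $I_{\lambda_2}(u)\leq I_{\lambda_1}(u)$ for $\lambda_1\leq \lambda_2$ in $L$ passes through the $\inf$--$\max$ taken over the common class $\Gamma$, giving $c_{\lambda_2}\leq c_{\lambda_1}$. Hence $\lambda\mapsto c_\lambda$ is non-increasing on $L$, and by Lebesgue's theorem on monotone real functions it is differentiable at almost every $\lambda\in L$. These are precisely the points at which I intend to produce a bounded (PS) sequence.

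Fix such a differentiability point $\lambda$, and pick $\lambda_k\nearrow \lambda$ in $L$. For each $k$ select a path $\gamma_k\in\Gamma$ with
\[
\max_{t\in[0,1]} I_{\lambda_k}(\gamma_k(t))\leq c_{\lambda_k}+(\lambda-\lambda_k).
\]
The heart of the argument is a uniform norm bound on those $t$ at which $\gamma_k(t)$ is close to realising the mountain-pass level of $I_\lambda$. Indeed, for any $t$ with $I_\lambda(\gamma_k(t))\geq c_\lambda-(\lambda-\lambda_k)$, writing $I_\lambda-I_{\lambda_k}=(\lambda_k-\lambda)B$ and rearranging yields
\[
B(\gamma_k(t))=\frac{I_{\lambda_k}(\gamma_k(t))-I_\lambda(\gamma_k(t))}{\lambda-\lambda_k}\leq \frac{c_{\lambda_k}-c_\lambda}{\lambda-\lambda_k}+2,
\]
and the right hand side stays bounded as $k\to\infty$ because $c'(\lambda)$ exists. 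Since also $A(\gamma_k(t))=I_{\lambda_k}(\gamma_k(t))+\lambda_k B(\gamma_k(t))$ is controlled by the same estimates, the assumption that either $A$ or $B$ is coercive delivers a constant $R=R(\lambda)$ such that $\|\gamma_k(t)\|\leq R$ on every ``near-peak'' slice of the path.

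Equipped with this localisation of the peaks, I would finish by a deformation argument applied in the annular strip $S_\varepsilon=\{u\in X:\|u\|\leq 2R,\ |I_\lambda(u)-c_\lambda|<\varepsilon\}$. If no bounded (PS) sequence at level $c_\lambda$ existed, then $\|I_\lambda'(u)\|$ would be uniformly bounded below on $S_\varepsilon$, and a pseudo-gradient flow cut off in both norm and energy could be used to push each $\gamma_k$ slightly downward while remaining inside $\{\|u\|\leq 2R\}$; this would strictly lower the $I_\lambda$-maximum along $\gamma_k$ below $c_\lambda$, contradicting the definition of $c_\lambda$. I expect the main obstacle to be precisely this last step: ensuring that the localised deformation is compatible with the norm bound already obtained on the peaks, so that the flow neither leaves the ball $\{\|u\|\leq 2R\}$ nor destroys the descent inequality. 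This is feasible because the uniform bound $\|\gamma_k(t)\|\leq R$ is strictly interior to $2R$, giving room for the cut-off, and the monotonicity argument has in effect already done all of the global work.
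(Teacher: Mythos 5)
The paper does not prove this statement at all---it is quoted verbatim from Jeanjean \cite[Theorem 1.1]{JJ}---and your outline is a faithful reconstruction of Jeanjean's original argument: monotonicity and a.e.\ differentiability of $\lambda\mapsto c_\lambda$, the estimate $B(\gamma_k(t))\leq (c_{\lambda_k}-c_\lambda)/(\lambda-\lambda_k)+2$ on the near-peak slices together with the induced bound on $A$, coercivity of $A$ or $B$ to obtain the uniform norm bound $R$, and a localized deformation to conclude. Your quantitative steps are all correct; the only part left at the level of a sketch is the final deformation lemma (cut off in both norm and energy, and fixing the endpoints, which is legitimate since $I_\lambda(v_1),I_\lambda(v_2)<c_\lambda$ keeps them out of the energy strip), and that is precisely the technical content of Jeanjean's proof, working for the reason you indicate.
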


Let then,  for $\lambda\in [1/2,1]$,
\[
\J_{q,\lambda} (u)
:=
\frac{1}{2} \|\nabla u\|_2^2 + \frac{\omega}{2}  \|u\|_2^2 + \frac{q^2}{4} \int \phi_u u^2
%-\frac{1}{4} \|\nabla \phi\|_2^2 -\frac{a^2}{4} \|\Delta \phi\|_2^2 
-\frac{\lambda}{p} \|u\|_p^p.
\]
The Mountain Pass Geometry for $\J_{q,\lambda}$, which can be proved arguing as in Lemma \ref{Lem:MP}, ensures that 
$$c_{\lambda}:= \inf_{\gamma\in\Gamma_{q,\lambda}} \max_{t\in[0,1]} \J_{q,\lambda}(\gamma(t))>0,$$
where $\Gamma_{q,\lambda}=\{\gamma\in C([0,1]; H^{1}_{r}(\R^3)): \gamma(0)= 0, \gamma(1)= w\}$ and $w\in H_r^1(\R^3)$ is such that $\J_{q,\lambda}(w)<0$. Then Theorem \ref{th:Jj} gives, for a sequence $\{\lambda_{j}\}\subset [1/2,1]$ such that $\lim_{j}\lambda_{j}=1$,
a bounded (PS) sequence $\{u_{n, \lambda_{j}}\}$ at level $c_{\lambda_{j}}$ for the functional $\J_{q,\lambda_{j}}$. \\
Observe that, for all $j\in \mathbb N$, $c_{\lambda_{j}}\in[c_{1}, c_{1/2}]$.\\
In view of Lemma
\ref{prop:PS} we can assume that, up to subsequence, for every $j\in \mathbb N,\{u_{n,\lambda_{j}}\}$ 
strongly converges to some
$u_{\lambda_{j}}\in H^{1}_{r}(\R^3)$ satisfying
\begin{equation*}%\label{eq:soluzionilambda}
\J_{q,\lambda_{j}}(u_{\lambda_{j}}) = c_{\lambda_{j}}, \quad \J_{q,\lambda_{j}}'(u_{\lambda_{j}})=0.
\end{equation*}
%{\color{red}In particular, being such  $u_{\lambda_{j}}$ a solution of the problem
%\[
%- \Delta u +\omega u+q^{2}\phi_{u} u = \lambda_j |u|^{p-2}u 
%\qquad
% \mbox{in } 
%\mathbb{R}^{3},
%\]
%it satisfies the Pohozaev identity
%\[
%-\frac{1}{2} \|\nabla u_{\lambda_{j}}\|_2^2
%- \frac{3}{2}\omega \|u_{\lambda_{j}}\|_2^2
%- \frac{q^2}{4a} \iint \left[5\frac{1-e^{-\frac{|x -y|}{a}}}{|x -y|/a} + e^{-\frac{|x -y|}{a}}\right]u_{\lambda_{j}}^2(x)u_{\lambda_{j}}^2(y)dxdy
%+\frac{3\lambda_{j}}{p} \|u_{\lambda_{j}}\|_p^p=0
%\]
%which can be written also as
%\begin{equation}\label{eq:PohJ}
%	-\|\nabla u_{\lambda_{j}}\|_2^2
%	+\frac{q^2}{2} \iint  \frac{1-e^{-\frac{|x -y|}{a}}}{|x -y|} u_{\lambda_{j}}^2(x)u_{\lambda_{j}}^2(y)dxdy
%	+\frac{q^2}{4a} \iint e^{-\frac{|x -y|}{a}}u_{\lambda_{j}}^2(x)u_{\lambda_{j}}^2(y)dxdy
%	+3c_{\lambda_{j}}
%	=0.
%\end{equation}
%Moreover, by writing down the identity $pc_{\lambda_{j}} =pJ_{q,\lambda_{j}}(u_{\lambda_{j}}) - J_{q,\lambda_{j}}'(u_{\lambda_{j}})
%[u_{\lambda_{j}}]$ we derive
%\begin{align*}
%	 (p-3)\|\n u_{\lambda_{j}}\|_2^2 +\frac{p-2}{2}\omega\|u_{\lambda_{j}}\|_2^2
%	&=\frac{5p-12}{2}c_{\lambda_{j}}
%	+q^2\frac{p-4}{8a}\iint e^{-\frac{|x -y|}{a}}u_{\lambda_{j}}^2(x)u_{\lambda_{j}}^2(y)dxdy\\
%	&\leq \frac{5p-12}{2}c_{1/2}
%\end{align*}
%which gives directly the boundedness of $\{u_{\lambda_{j}}\}$.}
In particular, being such  $u_{\lambda_{j}}$ a solution of the equation
	\[
	- \Delta u +\omega u+q^{2}\phi_{u} u = \lambda_j |u|^{p-2}u 
	\qquad
	\mbox{in } 
	\mathbb{R}^{3},
	\]
	and, arguing as in Appendix \ref{PohozaevApp},
	it satisfies the Pohozaev identity
	\[
	\frac{1}{2} \|\nabla u_{\lambda_{j}}\|_2^2
	+\frac{3}{2}\omega \|u_{\lambda_{j}}\|_2^2
	-\frac{q^2}{16 \pi} \|\nabla\phi_j \|_2^2
	+ \frac{q^2 a^2}{16 \pi} \|\Delta\phi_j\|_2^2
	+\frac{3}{2}q^2\int \phi_j u_{\lambda_j}^2
	-\frac{3\lambda_{j}}{p} \|u_{\lambda_{j}}\|_p^p=0
	\]
	with $\phi_j:=\phi_{u_{\lambda_j}}$, which can be written also as
	\begin{equation}\label{eq:PohJ}
	-\|\nabla u_{\lambda_{j}}\|_2^2
	-\frac{q^2}{16 \pi} \|\nabla\phi_j \|_2^2
	+ \frac{q^2 a^2}{16 \pi} \|\Delta\phi_j\|_2^2
	+\frac{3}{4}q^2\int \phi_j u_{\lambda_j}^2
	+3c_{\lambda_{j}}
	=0.
	\end{equation}
	Moreover
	\begin{equation}
	\label{PohJbis}
	\begin{split}
	pc_{\lambda_{j}}
	&=
	p\J_{q,\lambda_{j}}(u_{\lambda_{j}}) - \J_{q,\lambda_{j}}'(u_{\lambda_{j}})[u_{\lambda_{j}}]\\
	&=
	\left(\frac{p}{2}-1\right)\|\nabla u_{\lambda_{j}}\|_2^2
	+ \left(\frac{p}{2}-1\right)\omega \| u_{\lambda_{j}}\|_2^2
	+q^2 \left(\frac{p}{4}-1\right)\int \phi_j u_{\lambda_j}^2.
	\end{split}
	\end{equation}
	Thus, using \eqref{eq:PohJ} and \eqref{PohJbis}, we get
	\begin{equation}\label{eq:paperino}
	\begin{split}
	(p-3)\|\n u_{\lambda_{j}}\|_2^2 +\frac{p-2}{2}\omega\|u_{\lambda_{j}}\|_2^2
	&=
	\left(\frac{p}{2}-2\right)\|\nabla u_{\lambda_{j}}\|_2^2
	+\left(\frac{p}{2}-1\right)\|\nabla u_{\lambda_{j}}\|_2^2
	+ \left(\frac{p}{2}-1\right)\omega \| u_{\lambda_{j}}\|_2^2 
	\\
	&=
	\left(\frac{5}{2}p-6\right)c_{\lambda_{j}}
	+\frac{q^2}{8\pi} \left(\frac{p}{4}-1\right) \left(	-\|\nabla\phi_j \|_2^2
	+ a^2 \|\Delta\phi_j\|_2^2
	+4\pi \int \phi_j u_{\lambda_j}^2\right)\\
	&=
	\left(\frac{5}{2}p-6\right)c_{\lambda_{j}}
	+\frac{q^2 a^2}{4\pi} \left(\frac{p}{4}-1\right) \|\Delta\phi_j\|_2^2  \\
	&\leq \frac{5p-12}{2}c_{1/2}
	\end{split}
	\end{equation}
	which gives directly the boundedness of $\{u_{\lambda_{j}}\}$.\\
	We show now that $\{u_{\lambda_{j}}\}$ is indeed a (PS) sequence for the unperturbed functional
	$\J_{q}$. Indeed  due to the boundedness of $\{u_{\lambda_{j}}\}$ in $H^{1}_{r}(\R^3)$:
	$$
	\J_{q}(u_{\lambda_{j}}) =\J_{q,\lambda_{j}}(u_{\lambda_{j}}) -(1-\lambda_{j})\|u_{\lambda_{j}}\|_{p}^{p}\\
	= c_{\lambda_{j}} + o_{j}(1)
	$$
	implying the boundedness of $\{\J_{q}(u_{\lambda_{j}})\}$. Moreover 
	\begin{align*}
	\sup_{\|v\|\leq 1} |\J_{q}'(u_{\lambda_{j}})[v]| 
	&=
	\sup_{\|v\|\leq 1} \Big|\J_{q,\lambda_{j}}'(u_{\lambda_{j}})[v]
	-(1-\lambda_{j})p \int |u_{\lambda_{j}}|^{p-2}u_{\lambda_{j}} v \Big| \\
	&\leq
	\| \J'_{q,\lambda_{j}}(u_{\lambda_{j}})\| +(1-\lambda_{j})p C \|u_{\lambda_{j}}\|_{p}^{p-1}\\
	&= o_{j}(1).
	\end{align*}
	But then, in view of  Lemma \ref{prop:PS}, up to subsequence
	$u_{\lambda_{j}} \to \mathfrak{u}$ and so $\J_q'(\mathfrak{u})=0$, meaning that $\mathfrak{u}$ is a solution of \eqref{eq:ScBP} we were looking for. 

We conclude the section with the following remark that will be useful in the next section.

%\begin{Rem}\label{remdarich}
%If $p=3$, by \eqref{eq:paperino} we get the boundedness of $\{u_{\lambda_{j}}\}$ in $L^2(\R^3)$. Moreover, by \eqref{PohJbis} and using (\ref{propphivii}) in Lemma \ref{lem:propphi}, we infer the boundedness of $\{u_{\lambda_{j}}\}$ in $H^1(\R^3)$ uniformly with respect to $a$.
%\end{Rem}

\begin{Rem}\label{rem:Tbound}
In the radial setting we can repeat the arguments in Section \ref{sec4} replacing the Splitting Lemma \ref{lemSplitting} by standard arguments using the compact embedding of $H_r^1(\R^3)$ into $L^p(\R^3)$, $p\in(2,6)$, Lemma \ref{lem:wlimit}, and Lemma \ref{lemphi}. In such a way, using the notations introduced in Lemma \ref{lem:Tbarra}, for every $q<q_{*}$ we get a solution $\mathfrak{u}$ such that $\|\mathfrak{u}\|\leq \overline{T}$.
\end{Rem}

%Now we have the following
%\begin{Prop}\label{prop:3p4}
%For $p\in[3,4)$, the sequence $\{u_{\lambda}\}$ is bounded in $H^{1}_{r}$. 
%\end{Prop}
%
%\begin{proof}[Riassunto del Lemma 4.4 e del Lemma 4.5 di Kiku]
%
%
%La corrispondente della (4.9) di Kikuchi ?
%{\color{red}
%\begin{align*}
%\omega\left(\frac{p}{2}-1\right)\|u\|_2^2
%&=
%\left(\frac{5}{2}p-6\right)c_\lambda
%+(3-p)\|\n u\|_2^2
%+\frac{1}{2a}\left(\frac{p}{4}-1\right)\iint e^{-\frac{|x -y|}{a}}u_\lambda^2(x)u_\lambda^2(y)dxdy\\
%&\leq
%\left(\frac{5}{2}p-6\right)c_\lambda
%+(3-p)\|\n u\|_2^2\\
%&\leq
%\left(\frac{5}{2}p-6\right)c_\lambda
%\end{align*}
%
%
%\begin{align*}
%\min\{\omega\left(\frac{p}{2}-1\right), (p-3)\} \|u\|^2
%&\leq
%\omega\left(\frac{p}{2}-1\right)\|u\|_2^2
%+ (p-3)\|\n u\|_2^2\\
%&=
%\left(\frac{5}{2}p-6\right)c_\lambda
%+\frac{1}{2a}\left(\frac{p}{4}-1\right)\iint e^{-\frac{|x -y|}{a}}u_\lambda^2(x)u_\lambda^2(y)dxdy\\
%&\leq
%\left(\frac{5}{2}p-6\right)c_\lambda
%\end{align*}
%}
%\begin{align*}
%\omega\left(\frac{p}{2}-1\right)\|u\|_2^2
%+ (p-3)\|\n u\|_2^2
%&=
%\left(\frac{5}{2}p-6\right)c_\lambda
%+\frac{1}{2a}\left(\frac{p}{4}-1\right)\iint e^{-\frac{|x -y|}{a}}u_\lambda^2(x)u_\lambda^2(y)dxdy\\
%&\leq
%\left(\frac{5}{2}p-6\right)c_{1/2}
%\end{align*}
%which implies directly the boundedness of $\{u_\lambda\}$.
%\end{proof}
%
%
%

%\todo[inline]{Dire che nel caso $p$ piccolo, tutto l'armamentario di Kikuchi funziona nel setting radiale. Questa volta non abbiamo bisogno dello splittin, ma possiamo dimostrare che il limite debole della successione di PS \'e soluzione nonbanale.}

\section{The behaviour as $a\to 0$ in the radial case}\label{sec:Limite}

This section is devoted to the proof of Theorem \ref{th:ato0}. We begin by showing the following preliminary result. 
\begin{Lem}\label{lem:ato0}
 Consider $f^0\in  L^{6/5}(\mathbb R^{3})$,  $\{ f_{a}\}_{ a\in (0,1)}\subset  L^{6/5}(\mathbb R^{3})$ and let 
 $$\phi^{0}\in D^{1,2}(\mathbb R^{3}) \hbox{ be the unique solution of }  -\Delta \phi = f^{0} \hbox{ in }\mathbb R^{3}$$
 and
 $$\phi^{a}\in \mathcal D\hbox{ be the unique solution of } -\Delta\phi + a^{2}\Delta^{2}\phi = f^{a}\hbox{ in }\mathbb R^{3}.$$ 
As  $a\to0$ we have:
\begin{enumerate}[label=(\roman*),ref=\roman*]
		\item\label{1ato0} if $f^{a}\rightharpoonup f^{0}$ in $L^{6/5}(\mathbb R^{3})$, then 
		$\phi^{a}\rightharpoonup \phi^{0}$ in $D^{1,2}(\mathbb R^{3})$;
		\item\label{2ato0}  if $f^{a}\to f^{0}$ in $L^{6/5}(\mathbb R^{3})$, then $\phi^{a}\to \phi^{0}$ in $D^{1,2}(\mathbb R^{3})$ and $a\Delta \phi^{a} \to 0$ in $L^{2}(\mathbb R^{3})$.
	\end{enumerate}
\end{Lem}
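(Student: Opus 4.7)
The plan is to derive a uniform energy estimate by testing the equation for $\phi^a$ with itself, pass to the weak limit using the uniqueness of $\phi^0$, and then upgrade to strong convergence via a norm-convergence argument in the Hilbert space $D^{1,2}(\mathbb R^3)$.

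First, I would test the equation for $\phi^a$ with $\xi=\phi^a\in\mathcal D$ to get
\[
\|\nabla\phi^a\|_2^2 + a^2\|\Delta\phi^a\|_2^2 = \int f^a \phi^a \leq \|f^a\|_{6/5}\|\phi^a\|_6 \leq C\|f^a\|_{6/5}\|\nabla\phi^a\|_2.
\]
Since $f^a$ is (weakly) convergent in $L^{6/5}$, it is bounded there, and the above immediately gives both $\|\nabla\phi^a\|_2\leq C$ and $a\|\Delta\phi^a\|_2\leq C$, uniformly in $a\in(0,1)$. So, up to a subsequence, $\phi^a\rightharpoonup\tilde\phi$ in $D^{1,2}(\mathbb R^3)$ and $a\Delta\phi^a\rightharpoonup\psi$ in $L^2(\mathbb R^3)$ for some $\tilde\phi,\psi$.

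For item (\ref{1ato0}), I would test the equation for $\phi^a$ against any $\xi\in C^\infty_c(\mathbb R^3)$:
\[
\int \nabla\phi^a\nabla\xi + a\cdot\int (a\Delta\phi^a)\Delta\xi = \int f^a \xi.
\]
The first term converges to $\int\nabla\tilde\phi\nabla\xi$ by weak convergence in $D^{1,2}$; the second vanishes because the inner $L^2$-pairing is bounded and is multiplied by $a\to 0$; the right-hand side converges to $\int f^0\xi$ because $\xi\in L^6$ and $f^a\rightharpoonup f^0$ in $L^{6/5}$. Hence $\tilde\phi$ solves $-\Delta\tilde\phi=f^0$ in the distributional sense, and by uniqueness of the solution in $D^{1,2}(\mathbb R^3)$ we get $\tilde\phi=\phi^0$, so the whole family converges weakly.

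For item (\ref{2ato0}), I would exploit the energy identity together with strong convergence of $f^a$. Testing the equation for $\phi^0$ with itself gives $\|\nabla\phi^0\|_2^2=\int f^0\phi^0$. Since $f^a\to f^0$ in $L^{6/5}$ and $\phi^a\rightharpoonup\phi^0$ in $L^6$, we have $\int f^a\phi^a\to\int f^0\phi^0=\|\nabla\phi^0\|_2^2$. Combined with the identity
\[
\|\nabla\phi^a\|_2^2 + a^2\|\Delta\phi^a\|_2^2 = \int f^a\phi^a,
\]
this yields $\limsup_{a\to 0}\|\nabla\phi^a\|_2^2\leq\|\nabla\phi^0\|_2^2$, and the reverse inequality comes from weak lower semicontinuity. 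Thus $\|\nabla\phi^a\|_2\to\|\nabla\phi^0\|_2$, which together with weak convergence in the Hilbert space $D^{1,2}(\mathbb R^3)$ gives strong convergence. The same identity then forces $a^2\|\Delta\phi^a\|_2^2\to 0$, proving $a\Delta\phi^a\to 0$ in $L^2(\mathbb R^3)$.

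The main technical point, and essentially the only nontrivial one, is handling the fourth-order term $a^2\int\Delta\phi^a\Delta\xi$ in the passage to the limit: the key observation is that the right scaling is $a\cdot(a\Delta\phi^a)$, so the uniform bound on $a\Delta\phi^a$ in $L^2$ (rather than on $\Delta\phi^a$ itself, which blows up) makes the factor $a$ sufficient to kill this term.
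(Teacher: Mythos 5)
Your proposal is correct, and for item (i) it is identical to the paper's argument: the same energy estimate $\|\nabla\phi^a\|_2+\|a\Delta\phi^a\|_2\leq C\|f^a\|_{6/5}$, the same extraction of a weak limit, and the same observation that $a^2\int\Delta\phi^a\Delta\xi = a\int(a\Delta\phi^a)\Delta\xi\to 0$ identifies the limit as $\phi^0$ via uniqueness. The one place where you genuinely diverge is the $\limsup$ step in item (ii). The paper gets $\limsup_{a\to0}\|\nabla\phi^a\|_2^2\leq\|\nabla\phi^0\|_2^2$ by exploiting the fact that $\phi^a$ minimizes $E_a(\phi)=\tfrac12\|\nabla\phi\|_2^2+\tfrac{a^2}{2}\|\Delta\phi\|_2^2-\int f^a\phi$ over $\mathcal D$: it compares $E_a(\phi^a)$ with $E_a(\varphi_n)$ for a sequence $\varphi_n\in C_c^\infty(\R^3)$ approximating $\phi^0$ in $D^{1,2}(\R^3)$ (such a density detour is needed because $\phi^0$ itself need not lie in $\mathcal D$), and then sends $n\to+\infty$. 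You instead read the $\limsup$ bound directly off the energy identity $\|\nabla\phi^a\|_2^2+a^2\|\Delta\phi^a\|_2^2=\int f^a\phi^a$, combined with $\int f^a\phi^a\to\int f^0\phi^0=\|\nabla\phi^0\|_2^2$, which follows from the strong convergence of $f^a$ in $L^{6/5}(\R^3)$ paired against the weak convergence of $\phi^a$ in $L^6(\R^3)$. This is shorter: it dispenses with the variational characterization of $\phi^a$ and the approximating sequence altogether, at the price of using the (entirely standard) fact that the weak formulation of $-\Delta\phi^0=f^0$ extends to the test function $\phi^0$ itself. Both routes are valid; the final step, recovering $a\Delta\phi^a\to0$ in $L^2(\R^3)$ from the same identity, coincides in the two proofs.
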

\begin{proof}
	By
	$$
	\|\nabla \phi^{a}\|_2^{2} +a^{2} \|\Delta \phi^{a}\|_2^{2} = \int f^{a}\phi^{a}
	\leq C \|f^{a}\|_{6/5} \|\nabla \phi^{a}\|_{2}
	$$
	we deduce
	\begin{equation*}%\label{eq:limitazione}
	\|\nabla \phi^{a}\|_{2} \leq C \|f^{a}\|_{6/5}, \quad \|a\Delta \phi^{a}\|_{2} \leq C\|f^{a}\|_{6/5}.
	\end{equation*}
	Then there exists $\phi_{*}\in D^{1,2}(\mathbb R^{3})$ such that $\phi^{a}\rightharpoonup \phi_{*}$ in $D^{1,2}
	(\mathbb R^{3})$.
	Passing to the limit as $a\to 0$ in the identity
	\begin{equation*}
	\int \nabla \phi^{a} \nabla \varphi + a^{2}\int \Delta \phi^{a} \Delta \varphi =\int f^{a} \varphi, \quad \varphi\in C^{\infty}_{c}(\mathbb 	R^{3}),
	\end{equation*}
	and using that
	$$\Big|a^{2}\int \Delta \phi^{a} \Delta \varphi \Big| \leq a \|a\Delta \phi^{a}\|_{2} \|\Delta \varphi\|_{2} \leq a C\to 0,$$
	we get
	\begin{equation*}
	\int \nabla \phi_{*} \nabla \varphi = \int f^{0} \varphi.
	\end{equation*}
	Then, by the uniqueness, $\phi_{*} = \phi^{0}$ proving \eqref{1ato0}.\\	
	Assume now that $f^{a}\to f^0$ in $L^{6/5}(\mathbb R^{3})$. Of course we have
	\begin{equation}\label{eq:liminf}
	\|\nabla \phi^{0}\|_{2}^{2} \leq \liminf_{a\to 0} \|\nabla \phi^{a}\|_{2}^{2}.
	\end{equation}
	Let $\{\varphi_{n}\}\subset C_{c}^{\infty}(\mathbb R^{3})$ such that $\varphi_{n} \to \phi^{0} $ in $D^{1,2}(\mathbb R^{3})$ as $n\to+\infty$.
	Since $\phi^{a}$ minimizes the functional
	$$E_{a}(\phi) = \frac{1}{2}\|\nabla \phi\|_2^{2} + \frac{a^{2}}{2}\|\Delta \phi\|_2^{2} - \int  f^{a}\phi, 
	\quad \phi \in \mathcal D,$$
	we have
	\begin{align*}
	\frac{1}{2} \|\nabla \phi^{a}\|_{2}^{2}
	&=
	E_{a}(\phi^{a}) - \frac{a^{2}}{2} \|\Delta \phi^{a}\|_2^{2} +
	\int f^{a} \phi^{a} \\ 
	&\leq
	E_{a}(\varphi_{n}) +\int f^{a}\phi^{a} \\
	&=
	\frac{1}{2}\|\nabla \varphi_{n}\|_2^{2} +\frac{a^{2}}{2} \|\Delta \varphi_{n}\|_2^{2} - \int f^{a}\varphi_{n}  +\int f^{a}\phi^{a}
	\end{align*}
	and then
	\begin{equation}\label{eq:limsupa}
	\limsup_{a\to 0} \frac{1}{2} \|\nabla \phi^{a}\|_2^{2}\leq
	\frac{1}{2}\|\nabla \varphi_{n}\|_2^{2}  - \int f^{0} \varphi_{n}  +\int f^{0} \phi^{0}.
	\end{equation}
	Passing to the limit in $n$ in \eqref{eq:limsupa} we get
	\begin{equation}\label{eq:limsup}
	\limsup_{a\to 0}  \|\nabla \phi^{a}\|_2^{2}\leq
	\|\nabla \phi^{0}\|_2^{2}.
	\end{equation}
	By \eqref{eq:liminf}, \eqref{eq:limsup} and  the convergence
	$\phi^{a} \rightharpoonup \phi^{0}$ in $D^{1,2}(\mathbb R^{3})$ we infer $\phi^{a}\to \phi^{0}$
	in $D^{1,2}(\mathbb R^{3})$.\\
	Finally we see that, for $a\to 0$,
	$$\|a\Delta \phi^{a}\|_{2}^{2} = \int f^{a}\phi^{a} - \|\nabla \phi^{a}\|_2^{2} \to \int f^{0} \phi^{0}
	- \|\nabla \phi^{0}\|_2^{2}=0$$
	and the proof is complete.
\end{proof}

%Here is important  to write explicitly also the dependence on the parameter $a$.

Now we are ready to prove Theorem \ref{th:ato0}.
\begin{proof}[Proof of Theorem \ref{th:ato0}]

From now on we fix $q$ according to the restriction of Theorem \ref{th:existence},
and let  $\{\mathfrak u^{a}, \phi^{a}\}\subset H_r^{1}(\mathbb R^{3})\times \mathcal D_r$ 
be the family of the solutions of \eqref{eq:ScBP} for this fixed value $q$.
We are using the notation $\phi^{a}:= \phi^{a}_{ \mathfrak{u}^{a}}$.
In contrast to the previous sections we use the explicit dependence on $a$ also in the functional.
Then, % changing a little bit the notation and setting $\phi_{a}(u):= \phi_{a,u}$,
the functions $\{\mathfrak u^{a}\}$ solve
\begin{equation*}
- \Delta u +\omega u+ q^{2}\phi^{a} u = |u|^{p-2}u  \mbox{ in }
\mathbb{R}^{3}
\end{equation*}
and are critical point of $\mathcal J_{ q}^{a}$ at the Mountain Pass value $c_{ q}^{a}>0$.\\
Our first aim is to show the boundedness of $\{\mathfrak u^{a}\}_{a\in(0,1)}$ in $H_r^{1}(\mathbb R^{3})$.\\
Let
\[
c_{q}^0:=\inf_{\gamma\in\Gamma^{0}}\max_{t\in[0,1]} \mathcal J^{0}(\gamma(t)) \geq c_{ q}^{a}>0,
\]
where $
\Gamma^0
:=
\left\{\gamma \in C([0,1],H_r^1(\R^3)):  \gamma(0)=0, \mathcal J^0(\gamma(1))<0 \right\}$,
\[
\J^0 (u)
:=
\frac{1}{2} \|\nabla u\|_2^2 + \frac{\omega}{2}  \|u\|_2^2 + \frac{q^2}{4} \iint \frac{ u^2(x) u^2(y)}{|x-y|}
%-\frac{1}{4} \|\nabla \phi\|_2^2 -\frac{a^2}{4} \|\Delta \phi\|_2^2 
-\frac{1}{p} \|u\|_p^p
\]
is the functional related to \eqref{eq:SP}.\\
We distinguish three cases.\\
{\bf Case A:} $p\in[4,6)$.\\
We have
\[
c_{ q}^0 \geq c_{ q}^{a} = \mathcal J_{ q}^{a}(\mathfrak u^{a}) - 
\frac1 p  (\mathcal J_{ q}^{a})'(\mathfrak u^{a})[\mathfrak u^{a}]
=\frac{p-2}{2p} \|\mathfrak u^{a}\|^{2} +  \frac{p-4}{4p} \int \phi^{a} (\mathfrak u^{a})^{2}
\geq \frac{p-2}{2p} \|\mathfrak u^{a}\|^{2} .
\]
%showing the boundedness of the solutions $\{u_{a}\}$ in $H_{r}^{1}(\mathbb R^{3})$.
{\bf Case B:} $p\in(3,4)$.\\
Arguing as in \eqref{eq:paperino}, since $\mathfrak u^{a}$ is a solution at the Mountain Pass level
 $c_{ q}^{a}$, we infer
$$	
(p-3)\|\n \mathfrak u^{a}\|_2^2 +\frac{p-2}{2}\omega\|\mathfrak u^{a}\|_2^2
\leq \frac{5p-12}{2}c_{ q}^{a}\leq \frac{5p-12}{2}c_{ q}^0.
$$
%We can cover also the case $p=3$ arguing as in Remark \ref{remdarich}.\\
{\bf Case C:} $p\in(2,3]$.\\
By Remark \ref{rem:Tbound} we already know that $\|\mathfrak u^{a}\|\leq \overline T$, with $\overline T$ that does not depend on $a$ (see Remark \ref{rem:a}).\\
Hence in any case $\{\mathfrak u^{a}\}$ is bounded in $H^{1}_{r}(\mathbb R^{3})$ and
there exists $\mathfrak u^{0}\in H_r^{1}(\mathbb R^{3})$ such that, up to subsequences,
$\mathfrak u^{a}\rightharpoonup \mathfrak u^{0}$  in $H^{1}_{r}(\mathbb R^{3})$ as $ a\to0$.
In particular $(\mathfrak u^{a})^{2} \to (\mathfrak u^{0})^{2}$ in $L^{6/5}(\mathbb R^{3})$ and
by \eqref{2ato0} of Lemma \ref{lem:ato0} we infer that  $\phi^{a} \to \phi^{0}$, where $\phi^{0}\in D_r^{1,2}(\mathbb R^{3})$ is the unique solution of $ -\Delta \phi =4\pi \mathfrak{u}_0^2$ in $\mathbb R^{3}$.
The fact that $\mathfrak u^{a}\to \mathfrak u^{0}$ in $H_r^{1}(\mathbb R^{3})$
 is done  
as in Lemma \ref{prop:PS} since the proof can be merely repeated using Lemma \ref{lem:ato0}.\\
Let now $\varphi\in C^{\infty}_{c}(\mathbb R^{3})$ with $\operatorname{supp}(\varphi)=\Omega$. We know that
$$\langle \mathfrak u^{a}, \varphi\rangle +q^{2} \int_{\Omega} \phi^{a} \mathfrak u^{a} \varphi = 
\int_{\Omega} |\mathfrak u^{a}|^{p-2} \mathfrak u^{a}\varphi.$$
We want to pass to the limit as $a\to 0$ in each term.
Of course
\begin{equation}\label{eq:prodscal}
\langle \mathfrak u^{a}, \varphi\rangle \to \langle \mathfrak u^{0}, \varphi\rangle,
\end{equation}
and, as follows by standard arguments,
\begin{equation*}%\label{eq:omogeneo}
\int_{\Omega}|\mathfrak u^{a}|^{p-2} \mathfrak u^{a} \varphi \to
 \int_{\Omega}|\mathfrak u^{0}|^{p-2} \mathfrak u^{0} \varphi.
\end{equation*}
Moreover, since $\phi^{a} \to \phi^{0}$ in $L^{6}(\mathbb R^{3}), 
\mathfrak u^{a}\to \mathfrak u^{0}$ in $L^{12/5}(\Omega)$
and $\varphi\in L^{12/5}(\Omega)$, by the H\"older inequality we easily get
\begin{equation}\label{eq:nonlocalterm}
\int_{\Omega}\phi^{a} \mathfrak u^{a}\varphi \to  \int_{\Omega} \phi^{0} \mathfrak u^{0} \varphi.
\end{equation}
Then by \eqref{eq:prodscal}-\eqref{eq:nonlocalterm} we arrive at
\begin{equation*}%\label{eq:}
\langle \mathfrak u^{0}, \varphi \rangle + q^{2}\int_{\Omega} \phi^{0} \mathfrak u^{0} \varphi = \int_{\Omega} |\mathfrak u^{0}|^{p-2} \mathfrak u^{0}\varphi
\end{equation*}
which shows that $(\mathfrak u^{0},\phi^{0})$ solves \eqref{eq:SP}.
%, i.e.
%$$
%-\Delta u + \omega u +  q^{2}\phi^{0} u = |u|^{p-2}u \quad \text{ in }\mathbb R^{3}.
%$$
%The proof of Theorem \ref{th:ato0} is thereby concluded.
%Then $\mathfrak u_{\overline q}^{0}$ gives rise to a solution $\mathfrak u_{\overline q}^{0}, \phi_{0}:= \phi_{0}(u_{0})$
%of the Schr\"odinger-Poisson system
%\begin{equation*}\label{eq:}
%\begin{cases}
%-\Delta u + \omega u + \overline q^{2}\phi u = |u|^{p-2}u\\
%-\Delta \phi   = 4\pi  u^2
%\end{cases}
%\hbox{ in }\mathbb{R}^3.
%\end{equation*}
\end{proof}

\appendix

\section{Properties of solutions and nonexistence}\label{app:A}

In this appendix we show that our solutions are indeed classical. Moreover we prove, by means of Nehari and Poho\v zaev type identities, some nonexistence results.

\subsection{Regularity of the solutions.}\label{secreg}
We remark here that the weak solutions we find are indeed classical solutions.
This is based on standard bootstrap arguments that we briefly recall  here.\\
Let us first observe that if $(u,\phi)\in H^{1}(\mathbb R^{3})\times \mathcal D$
is a weak solution of \eqref{eq:ScBP} then $\psi: = -a^{2}\Delta \phi+\phi$
%is $$\psi\in H^{2}_{\textrm{loc}}(\mathbb R^{3})$$
solves weakly, in any bounded domain $\Omega$, the equation
$$-\Delta \psi = 4\pi u^{2} \quad \text{ in }\Omega.$$
Now, being $u^{2}\in L^{3}(\mathbb R^{3})$ 
%and  $-\Delta(\phi-a^{2}\Delta \phi) = 4\pi q u^{2}$,
it holds
(see e.g. \cite[Theorem 9.9]{GT})
\begin{equation}\label{eq:1}
-a^{2}\Delta \phi +\phi= \psi \in W_{\textrm{loc}}^{2,3}(\mathbb R^{3}).
\end{equation}
Since $\phi\in H^{1}_{\textrm{loc}}(\mathbb R^{3})$ is a weak solution of \eqref{eq:1}
with $\psi\in W^{2,2}_{\textrm{loc}}(\mathbb R^{3})$,
by {\sl higher interior regularity} (see e.g. \cite[Theorem 8.10]{GT}), we deduce
$\phi\in W^{4,2}_{\textrm{loc}}(\mathbb R^{3})$ and by the Sobolev embedding 
(see e.g. \cite[Theorem 5.4]{Adams}) we deduce that
$\phi\in C^{2,\lambda}_{\textrm{loc}}(\mathbb R^{3})$, $\lambda\in (0,1/2]$.\\
%
% CONTO VECCHIO
%
%Since $\mathcal D_{\textrm{loc}}(\mathbb R^{3})\subset H^{1}_{\textrm{loc}}(\mathbb R^{3})$
%we have that  $\phi\in H^{1}_{\textrm{loc}}(\mathbb R^{3})$ is a weak solution of \eqref{eq:1}; but then, by \cite[Theorem 8.10]{GT}
%it is $\phi\in H^{2}_{\textrm{loc}}(\mathbb R^{3})$ and by \cite[Theorem 9.19]{GT}, we deduce
% $\phi\in W^{4,3}_{\textrm{loc}}(\mathbb R^{3})$. By the Sobolev embedding, it is  
% $\phi\in C^{2,\lambda}_{\textrm{loc}}(\mathbb R^{3}), \lambda\in (0,1)$, see e.g. \cite[Theorem 5.4]{Adams}.
Then considering the equation 
$$-\Delta u + \omega u+q^{2}\phi u =|u|^{p-2}u$$
we deduce by bootstrap arguments that $u\in C^{2,\lambda}_{\textrm{loc}}(\mathbb R^{3})$.
% DAVID, pag 650, JFA.
% OPPURE: Libro Cazenave, Corollario 4.4.3: implica che $u\in C^{2}_{loc}$. Qunidi $u$ soddisfa
% $-\Delta u +V(x) u =0$ con $V(x) = 1+\phi_{u} +|u|^{p-2}$ che é una equazione lineare
%e quindi so usano i teoremi di GT per dire che é holderiana
But then, being
\begin{equation*}%\label{eq:2}
-\Delta \psi = 4\pi  u^{2}\in H^{2}_{\textrm{loc}}(\mathbb R^{3}), %C^{2,\lambda}_{\textrm{loc}}(\mathbb R^{3}),
\end{equation*}
it holds again by \cite[Theorem 8.10]{GT} that 
$$-a^{2}\Delta \phi +\phi =\psi \in H^{4}_{\textrm{loc}}(\mathbb R^{3})$$
and then, by higher interior regularity and Sobolvev embedding, $\phi\in H^{6}_{\textrm{loc}}(\mathbb R^{3})\hookrightarrow C^{4,\lambda}_{\textrm{loc}}(\mathbb R^{3})$, $\lambda\in (0,1/2]$.

\subsection{A useful identity}\label{id}

We define the Fourier transform of a function $f\in L^{1}(\mathbb R^{3})$  by the formula
$$\mathcal F(f) (x):= \frac{1}{(2\pi)^{3/2}} \int e^{-i x y} f(y) dy.$$
If $f\in L^{2}(\mathbb R^{3})$ its Fourier transform is defined by the usual approximation procedure.
With this definition we have, for $f,g\in L^{2}(\mathbb R^{3})$,
$$\mathcal F(f*g)=(2\pi)^{3/2}\mathcal F(f) \mathcal F (g) \quad \text{and}
\quad \int \mathcal F(f) \mathcal F(g) = \int fg.$$

Then, by classical results in Fourier analysis (observe that the functions involved are 
e.g. in $L^{1}(\mathbb R^{3})\cap L^{2}(\mathbb R^{3})$), 
we get for any $a>0$,
% see ex, 12.20 and 12.21 notes of Gianni Cassinelli, Claudio Carmeli
$$
\mathcal F( e^{-|\cdot |/a}) (x)=\sqrt{\frac{2}{\pi}} \frac{2a^{3}}{(1+a^{2}|x|^{2})^{2}}\,
\qquad  \text{and}
\qquad \mathcal F\Big(\frac{e^{-|\cdot|/a}}{|\cdot|}\Big)(x)=\sqrt{\frac{2}{\pi}}\frac{a^{2}}{1+a^{2}|x|^{2}}.
$$
Then by recalling \eqref{eq:DeltaK} we have 
\begin{align}\label{eq:L*L}
\mathcal F(\Delta \mathcal K *\Delta \mathcal K)= (2\pi)^{3/2} \mathcal F(\Delta \mathcal K) \mathcal F(\Delta \mathcal K)=
 \frac{4\sqrt{2\pi}}{(1+a^{2}|\cdot |^{2})^{2}}
\end{align}
from which we deduce
$$
\Delta \mathcal K * \Delta\mathcal K = \frac{2\pi}{a^{3}} e^{-|\cdot |/a}.
$$
Since for $u\in H^{1}(\mathbb R^{3})$ it is $\Delta \phi_{u} = \Delta \mathcal K*u^{2}$ (see Lemma \ref{lemfundsol} and item \eqref{propphivi}
in Lemma \ref{lem:propphi}),  in virtue of \eqref{eq:L*L}, we get
\begin{equation}
\label{Fourier}
\begin{split}
\|\Delta \phi_u\|_2^2 %&= \int |\Delta\mathcal{K}*u^2|^2\\
&= \int |\mathcal F (\Delta \phi_{u})|^2 \\
&=(2\pi)^{3/2}\int  \mathcal F(u^{2})\, \mathcal F(\Delta \mathcal K) \mathcal F (\Delta \phi_{u}) \\
& = \int  \mathcal F (u^{2})\, \mathcal F (\Delta \mathcal K * \Delta\phi_{u}) \\ 
& = \int   u^{2}\,  \Delta \mathcal K * \Delta \mathcal K *u^{2} \\ 
& = \frac{2\pi}{a^{3}}\iint e^{-|x-y|/a} u^{2}(x)u^{2}(y)dxdy.
\end{split}
\end{equation}
In another words we have the identity
\begin{equation*}%\label{eq:xxx}
\int \Big(\int \frac{e^{-|x-y|/a}}{|x-y|}u^{2}(y) dy \Big)^{2}dx =  2\pi a\iint e^{-|x-y|/a} u^{2}(x)u^{2}(y)dxdy
\end{equation*}
true for any $u\in H^{1}(\mathbb R^{3})$.

\subsection{The Poho\v zaev identity}\label{PohozaevApp}
Let $(u,\phi)\in H^{1}(\mathbb R^{3})\times \mathcal D$ be a nontrivial solution of \eqref{eq:ScBP}. Recall that $\phi=\phi_u$. We have
\begin{equation}
\label{Ne1}
\|\nabla u\|_2^2
+ \omega \|u\|_2^2
+ q^2 \int \phi u^2
- \|u\|_p^p =0
\end{equation}
and
\begin{equation}
\label{Ne2}
\|\nabla \phi\|_2^2 + a^2 \|\Delta \phi\|_2^2= 4\pi \int  \phi u^2,
\end{equation}
that are usually called {\em Nehari} identities.\\
Moreover $(u,\phi)$ satisfies also the following {\em Poho\v zaev} identity
\begin{equation}
\label{poho}
- \frac{1}{2}\|\nabla u\|_2^2
-\frac{3}{2} \omega \| u\|_2^2
+ \frac{q^2}{16\pi} \|\nabla \phi\|_2^2
- \frac{q^2 a^2}{16\pi} \|\Delta\phi\|_2^2
- \frac{3}{2}q^2\int \phi u^2
+\frac{3}{p}\|u\|_p^p
=0.
\end{equation}
In fact, if $(u,\phi)$ solve \eqref{eq:ScBP}, recalling the regularity proved in Section \ref{secreg}, for every $R>0$, we have
\begin{align}
\int_{B_R} -\Delta u (x\cdot\nabla u)
&=
- \frac{1}{2}\int_{B_R}|\nabla u|^2
-\frac{1}{R}\int_{\partial B_R} |x\cdot\nabla u|^2
+ \frac{R}{2}\int_{\partial B_R} |\nabla u|^2, \label{eq:pohoBR1}
\\
\int_{B_R} \phi u (x\cdot\nabla u)
&=
- \frac{1}{2} \int_{B_R} u^2 (x\cdot \nabla\phi)
- \frac{3}{2}\int_{B_R}\phi u^2
+\frac{R}{2}\int_{\partial B_R} \phi u^2, \label{eq:pohoBR2}
\\
\int_{B_R} u (x\cdot\nabla u)&=-\frac{3}{2} \int_{B_R} u^2 + \frac{R}{2}\int_{\partial B_R} u^2, \label{eq:pohoBR3}
\\
\int_{B_R} |u|^{p-2} u  (x\cdot\nabla u)&=-\frac{3}{p}\int_{B_R} |u|^p + \frac{R}{p}\int_{\partial B_R} |u|^p, \label{eq:pohoBR4}
%\\
%\int_{B_R} g(u) (x\cdot\nabla u)&=-3\int_{B_R} G(u) + R\int_{\partial B_R} G(u), \label{eq:pohoBR5}
\end{align}
where $B_R$ is the ball of $\RT$ centered in the origin and with radius $R$ (see also \cite{DM}), and, since
\[
\Delta^2 \phi (x\cdot \nabla\phi)
= \operatorname{div}\left(
\nabla\Delta \phi (x\cdot\nabla\phi)
- \Delta\phi\nabla\phi
- \mathbb{F}
+x\frac{(\Delta\phi)^2}{2}\right)
+ \frac{(\Delta\phi)^2}{2},
\]
where $\mathbb{F}_i = \Delta\phi (x\cdot \nabla(\partial_i \phi))$, $i=1,2,3$, then
\begin{equation}
\label{eq:pohoBR5}
\int_{B_R} \Delta^2 \phi (x\cdot \nabla\phi)
=
\frac{1}{2}\int_{B_R} (\Delta\phi)^2
+\int_{\partial B_R} 
\left(
\nabla\Delta \phi (x\cdot\nabla\phi)
- \Delta\phi\nabla\phi
- \mathbb{F}
+x\frac{(\Delta\phi)^2}{2}\right)\cdot\nu.
\end{equation}
Multiplying the first equation of \eqref{eq:ScBP} by $x\cdot\nabla u$ and the second equation
by $x\cdot\nabla\phi$ and integrating on $B_R$, by \eqref{eq:pohoBR1}, \eqref{eq:pohoBR2}, \eqref{eq:pohoBR3}, \eqref{eq:pohoBR4}, and \eqref{eq:pohoBR5} we get
\begin{equation}
\label{eq:pohotBR1}
\begin{split}
- \frac{1}{2}&\int_{B_R}|\nabla u|^2
-\frac{1}{R}\int_{\partial B_R} |x\cdot\nabla u|^2
+ \frac{R}{2}\int_{\partial B_R} |\nabla u|^2
-\frac{3}{2} \omega \int_{B_R} u^2
+ \frac{R}{2}\omega \int_{\partial B_R} u^2\\
&
- \frac{q^2}{2} \int_{B_R} u^2 (x\cdot \nabla\phi)
- \frac{3}{2}q^2\int_{B_R}\phi u^2
+q^2\frac{R}{2}\int_{\partial B_R} \phi u^2
=-\frac{3}{p}\int_{B_R} |u|^p + \frac{R}{p}\int_{\partial B_R} |u|^p
\end{split}
\end{equation}
and
\begin{equation}\label{eq:pohotBR2}
\begin{split}
4\pi \int_{B_R} u^2 (x\cdot\nabla\phi)
&=- \frac{1}{2}\int_{B_R}|\nabla \phi|^2
- \frac{1}{R}\int_{\partial B_R} |x\cdot\nabla \phi|^2 
+ \frac{R}{2}\int_{\partial B_R} |\nabla \phi|^2
+\frac{a^2}{2} \int_{B_R} (\Delta\phi)^2\\
&\qquad
+ a^2 \int_{\partial B_R} 
\left(
\nabla\Delta \phi (x\cdot\nabla\phi)
- \Delta\phi\nabla\phi
- \mathbb{F}
+x\frac{(\Delta\phi)^2}{2}\right)\cdot\nu.
\end{split}
\end{equation}
Substituting \eqref{eq:pohotBR2} into \eqref{eq:pohotBR1} we obtain
\begin{align*}
- \frac{1}{2}&\int_{B_R}|\nabla u|^2
-\frac{3}{2} \omega \int_{B_R} u^2
+ \frac{q^2}{16\pi} \int_{B_R}|\nabla \phi|^2
- \frac{q^2 a^2}{16\pi} \int_{B_R} (\Delta\phi)^2
- \frac{3}{2}q^2\int_{B_R}\phi u^2
+\frac{3}{p}\int_{B_R} |u|^p 
\\
&
=
\frac{1}{R}\int_{\partial B_R} |x\cdot\nabla u|^2
- \frac{R}{2}\int_{\partial B_R} |\nabla u|^2
- \frac{R}{2}\omega \int_{\partial B_R} u^2
-q^2\frac{R}{2}\int_{\partial B_R} \phi u^2
+ \frac{R}{p}\int_{\partial B_R} |u|^p\\
&\qquad
- \frac{q^2}{8\pi R}\int_{\partial B_R} |x\cdot\nabla \phi|^2 
+ \frac{q^2 R}{16\pi}\int_{\partial B_R} |\nabla \phi|^2\\
&\qquad
+ \frac{q^2a^2}{8\pi}  \int_{\partial B_R} 
\left(
\nabla\Delta \phi (x\cdot\nabla\phi)
- \Delta\phi\nabla\phi
- \mathbb{F}
+x\frac{(\Delta\phi)^2}{2}\right)\cdot\nu.
\end{align*}
Using the same arguments as in \cite[Proof of Theorem 1.1]{DM} we have that the right hand side tends to zero as $R\to +\infty$, since
\begin{align*}
&\int_{\partial B_R} \nabla\Delta \phi (x\cdot\nabla\phi)\cdot\nu
= R \int_{\partial B_R} \frac{\partial \Delta \phi}{\partial \nu}  \frac{\partial \phi}{\partial \nu} \to 0,
\\
&\int_{\partial B_R} \Delta\phi\nabla\phi\cdot\nu
= \int_{\partial B_R} \Delta\phi \frac{\partial \phi}{\partial \nu} \to 0,
\\
&\int_{\partial B_R} 
\mathbb{F} \cdot\nu
=R  \int_{\partial B_R} \frac{\partial^2 \phi}{\partial \nu^2} \to 0,
\\
&\frac{1}{2}\int_{\partial B_R} (\Delta\phi)^2 x\cdot\nu = \frac{R}{2}\int_{\partial B_R} (\Delta\phi)^2  \to 0,
\end{align*}
and so we get \eqref{poho}.\\
Finally we observe that, using \eqref{Fourier}, the Poho\v zaev identity \eqref{poho} can be written also as
\begin{equation}
	\label{eq:pohoapp}
	-\frac{1}{2}\|\nabla u\|_2^2
	-\frac{3}{2} \omega \|u\|_2^2
	- \frac{q^2}{4a}\iint \left[5 \frac{1-e^{-\frac{|x-y|}{a}}}{|x-y|/a} + e^{-\frac{|x-y|}{a}}\right] u^2(x) u^2(y) dxdy
	+\frac{3}{p}\|u\|_p^p
	=0.
	\end{equation}

\subsection{A nonexistence result}

Using the identities recalled before, we are able to show nonexistence results for $p\leq 2$ and for $p\geq 6$.\\
In fact,
if $(u,\phi)\in H^{1}(\mathbb R^{3})\times \mathcal D$ is a nontrivial solution of \eqref{eq:ScBP} and $p\geq 6$, replacing \eqref{Ne1} and \eqref{Ne2} into \eqref{poho} we get
\begin{align*}
0
&=
- \frac{1}{2}\|\nabla u\|_2^2
-\frac{3}{2} \omega \| u\|_2^2
+ \frac{q^2}{16\pi} \|\nabla \phi\|_2^2
- \frac{q^2 a^2}{16\pi} \|\Delta\phi\|_2^2
- \frac{3}{2}q^2\int \phi u^2
+\frac{3}{p}\|u\|_p^p\\
&=
\left(\frac{3}{p}-\frac{1}{2}\right)\|\nabla u\|_2^2
+\left(\frac{3}{p} - \frac{3}{2}\right)\omega \|u\|_2^2
- \frac{q^2 a^2}{8\pi} \|\Delta\phi\|_2^2
+\left(\frac{3}{p} - \frac{5}{4}\right)q^2\int \phi u^2\\
&\leq
-\omega \|u\|_2^2<0.
\end{align*}
Moreover, if $p\leq 2$, replacing \eqref{eq:pohoapp} into \eqref{Ne2} and using \eqref{phiu}, we have
	\begin{align*}
	0
	&=
	\|\nabla u\|_2^2
	+ \omega \|u\|_2^2
	+ q^2\int \phi u^2
	- \|u\|_p^p\\
%	&=
%	\left(1-\frac{p}{6}\right)\|\nabla u\|_2^2
%	+ \left(1-\frac{p}{2}\right) \omega \|u\|_2^2
%	+ \left(1-\frac{p}{2}\right) q^2\int \phi u^2
%	+\frac{q^2 p}{48\pi} \|\nabla\phi\|_2^2
%	- \frac{a^2 q^2 p}{48\pi} \|\Delta\phi\|_2^2\\
	&=
	\left(1-\frac{p}{6}\right)\|\nabla u\|_2^2
	+ \left(1-\frac{p}{2}\right) \omega \|u\|_2^2
	+ \left(1-\frac{5}{12}p\right) q^2\iint \frac{1-e^{-\frac{|x -y|}{a}}}{|x -y|}u^2(x)u^2(y)dxdy\\
	&\qquad
	-q^2 \frac{p}{12 a} \iint e^{-\frac{|x -y|}{a}} u^2(x)u^2(y)dxdy\\
	&=
	\left(1-\frac{p}{6}\right)\|\nabla u\|_2^2
	+ \left(1-\frac{p}{2}\right) \omega \|u\|_2^2\\
	&\qquad
	+\frac{q^2}{a}  
	\iint \left[
	\left(1-\frac{5}{12}p\right) \frac{1-e^{-\frac{|x -y|}{a}}}{|x -y|/a}
	-\frac{p}{12}e^{-\frac{|x -y|}{a}} 
	\right]u^2(x)u^2(y)dxdy\\
	&\geq
	\frac{2}{3}\|\nabla u\|_2^2
	+\frac{q^2}{6a}  
	\iint \left[
	\frac{1-e^{-\frac{|x -y|}{a}}}{|x -y|/a}
	-e^{-\frac{|x -y|}{a}} 
	\right]u^2(x)u^2(y)dxdy >0
	\end{align*}
since the function in the parenthesis is positive.

\section{Proof of Lemma \ref{lemSplitting}}\label{appsplitt}
This appendix is devoted to the proof of the Splitting Lemma. To do this, we need some preliminary results.
%\textcolor{red}
%{OSSERVA che in {\bf ogni} caso soluzioni del problema si trovano: anche se il processo non finisce mai,
%	abbiamo trovato $w_{1},..$ che sono soluzioni. Solo non si sa dire se sono al livello piu basso di $c$}
\begin{Lem}\label{lem:wlimit}
	The weak limit of a (PS) sequence for $\mathcal{J}_q$ in $H^1(\R^3)$ is a critical point of $\mathcal{J}_q$.
\end{Lem}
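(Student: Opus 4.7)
The plan is to verify that
\[
\J_q'(u_n)[v]\to \J_q'(u_0)[v]
\]
for every $v\in C_c^\infty(\R^3)$; combined with $\|\J_q'(u_n)\|_{H^{-1}(\R^3)}\to 0$, this forces $\J_q'(u_0)[v]=0$, and density of $C_c^\infty(\R^3)$ in $H^1(\R^3)$ concludes. Since $\{u_n\}$ admits a weak limit, it is in particular bounded in $H^1(\R^3)$, a fact I use throughout and pass to subsequences freely without renaming.

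I would split $\J_q'(u_n)[v]$ into the four summands
\[
\int \nabla u_n\nabla v + \omega\int u_n v + q^{2}\int \phi_{u_n}u_n v - \int |u_n|^{p-2}u_n v
\]
and pass to the limit in each. The two linear terms are immediate from $u_n\rightharpoonup u_0$ in $H^1(\R^3)$. For the pure-power term, I would fix a ball $B\supset \supp v$: by the Rellich--Kondrachov theorem $u_n\to u_0$ in $L^p(B)$, and standard Nemytskii continuity yields $|u_n|^{p-2}u_n\to |u_0|^{p-2}u_0$ in $L^{p'}(B)$, so the pairing with $v\in L^{p}(B)$ passes to the limit.

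The only genuinely nonlocal piece is $q^{2}\int \phi_{u_n}u_n v$. Lemma \ref{lem:propphi}\eqref{propphii} gives $\phi_{u_n}\rightharpoonup \phi_{u_0}$ in $\mathcal{D}$, and then Lemma \ref{lem31} provides a uniform $L^\infty$-bound on $\{\phi_{u_n}\}$. Via the continuous embedding $\mathcal{D}\hookrightarrow D^{1,2}(\R^3)$, the sequence $\{\phi_{u_n}\}$ is bounded in $H^1_{\mathrm{loc}}(\R^3)$, so Rellich--Kondrachov gives (up to a subsequence) $\phi_{u_n}\to \phi_{u_0}$ in $L^2_{\mathrm{loc}}(\R^3)$. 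Writing
\[
\Bigl|\int(\phi_{u_n}u_n-\phi_{u_0}u_0)v\Bigr|\le \|v\|_\infty\|\phi_{u_n}\|_\infty\|u_n-u_0\|_{L^2(B)} + \|v\|_\infty\|u_0\|_{L^2(B)}\|\phi_{u_n}-\phi_{u_0}\|_{L^2(B)},
\]
both factors on the right-hand side tend to zero, yielding the desired convergence.

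There is no serious obstacle: Lemma \ref{lem31} and Lemma \ref{lem:propphi}\eqref{propphii} were precisely set up to make the nonlocal term continuous under weak $H^{1}$-convergence. The only point deserving attention is the interplay between the weak $\mathcal{D}$-convergence of $\phi_{u_n}$ and the strong $L^2_{\mathrm{loc}}$-convergence of $u_n$, which is cleanly bridged by the uniform $L^\infty$-bound on $\phi_{u_n}$ coming from Lemma \ref{lem31}; once this is handled, the term-by-term passage to the limit is routine.
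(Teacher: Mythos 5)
Your proof is correct and follows essentially the same route as the paper: you reduce to testing against $C_c^\infty$ functions, handle the linear and power terms by weak convergence plus local compactness, and split the nonlocal term into the same two pieces, one controlled by strong local convergence of $u_n$ together with a uniform bound on $\phi_{u_n}$ (you use the $L^\infty$ bound from Lemma \ref{lem31}, the paper the $L^6$ bound from Lemma \ref{lem:propphi}), the other by the local strong convergence of $\phi_{u_n}$ extracted from the weak $\mathcal{D}$-convergence in Lemma \ref{lem:propphi} (\ref{propphii}). No gaps.
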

\begin{proof}
	Let $\{v_n\}\subset H^1(\R^3)$ be a (PS) sequence  for $\mathcal{J}_q$ and $v$ its weak limit. Then, for all $\varphi\in C_0^\infty(\R^3)$ we have that
	\[
	\mathcal{J}_q'(v_n)[\varphi] = \int \nabla v_{n} \nabla \varphi+\omega \int v_{n}\varphi
	+q^{2}\int \phi_{v_{n}}v_{n}\varphi - \int|v_{n}|^{p-2}v_{n}\varphi
	\to 0
	\quad
	\hbox{ as } n\to +\infty.
	\]
	Due to the %weak convergence in $H^1(\R^3)$ and the  consequent 
	strong convergence of $v_n$ to $v$ in $L_{\rm loc}^q(\R^3)$ for $1\leq q < 6$, to conclude it is enough to prove that
	\begin{equation*}
	%\label{eqweaksol}
	\int \phi_{v_{n}}v_n\varphi
	\to
	\int \phi_{v}v\varphi.
	\end{equation*}
	Observe that
	\[
	\left|\int \phi_{v_{n}}v_n\varphi
	- \int \phi_{v}v\varphi\right|
	\leq
	\underbrace{\int \phi_{v_{n}}|v_n - v| |\varphi |}_{I_1}
	+
	\underbrace{\int |\phi_{v_{n}} - \phi_{v} | |v\varphi |}_{I_2}.
	\]
	By the H\"older inequality, the boundedness of $\{\phi_{v_n}\}$ in $L^6(\R^3)$, see (\ref{propphivii}) in Lemma \ref{lem:propphi}, % and from the boundedness of $\{v_n\}$ in $H^1(\R^3)$ which is due to the weak convergence)  
	and the strong convergence of $v_n$ to $v$ in $L_{\rm loc}^3(\R^3)$  we get
	%	\todo[inline]{Qui ho fatto un piccolo cambiamento rispetto a Kikuchi. Lui non pala della limitatezza di $\{\mathcal{K}*u_n^2\}$ e, invece, a me sembra che sia importante. Poi lui considera $\{\mathcal{K}*u_n^2\}$ in $L^2(\R^3)$. Vedi se ti piace come ho fatto. Poi accorciamo eventualmente le motivazioni.}
	\[
	I_1 \leq \| \phi_{v_{n}}\|_6 \|v_n - v\|_{L^2(\operatorname{supp}\varphi)} \|\varphi \|_{L^3(\operatorname{supp}\varphi)}
	\to 0
	\quad
	\hbox{ as } n\to +\infty.
	\]
	%	\todo[inline]{Qui abbiamo qualcosa di non chiaro: se la convergenza debole in $D^{1,2}	(\R^3)$ implica la convergenza forte in $L_{loc}^2(\R^3)$, allora potrebbe tornare tutto, infatti}
	On the other hand, using  Lemma \ref{lem:propphi}, item \eqref{propphii} we infer
	%\todo[inline]{Potremmo dire qualcosa sul fatto che la convergenza debole in $\mathcal{D}$ implica la convergenza forte in $L_{\rm loc}^2(\R^3)$.}
	\[
	I_2 \leq \|\phi_{v_{n}} - \phi_{v} \|_{L^2(\operatorname{supp}\varphi)} \|v\varphi \|_{L^2(\operatorname{supp}\varphi)}\to 0
	\quad
	\hbox{ as } n\to +\infty,
	\]
	completing the proof by density.
	%		\todo[inline]{An alternative proof could be to show that  if $\mathcal{K}*u_n^2  \rightharpoonup \mathcal{K}*u_0^2 $ in $D$, then $\mathcal{K}*u_n^2  \to \mathcal{K}*u_0^2 $ in $L_{\rm loc}^2(\R^3)$.}	
\end{proof}

\begin{Lem}\label{lem:BLforJ}
	For every $v\in H^1(\R^3)$ and $v_n \rightharpoonup0 $ in $H^1(\R^3)$, we have
	\[
	\mathcal{J}_q (v_n + v) - \mathcal{J}_q (v_n) - \mathcal{J}_q (v) \to 0
	\quad
	\hbox{ as } n\to +\infty.
	\]
\end{Lem}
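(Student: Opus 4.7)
The plan is to decompose $\mathcal{J}_q(v_n+v)-\mathcal{J}_q(v_n)-\mathcal{J}_q(v)$ into four pieces, one for each term in $\mathcal{J}_q$, and show each vanishes in the limit. Pass to a subsequence so that $v_n\to 0$ a.e. in $\R^3$ (by Rellich, applied via a diagonal argument on a compact exhaustion); the conclusion for the whole sequence then follows from the usual subsequence-of-subsequences principle. For the two quadratic pieces, expanding
\[
\|\nabla(v_n+v)\|_2^2=\|\nabla v_n\|_2^2+\|\nabla v\|_2^2+2\int\nabla v_n\nabla v,
\]
and similarly for $\|\cdot\|_2^2$, the cross terms go to zero directly by the weak convergence $v_n\rightharpoonup 0$ in $H^1(\R^3)$. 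For the power term, the classical Brezis--Lieb lemma applied to $v_n$ (bounded in $L^p(\R^3)$, converging a.e.\ to $0$) gives $\|v_n+v\|_p^p-\|v_n\|_p^p-\|v\|_p^p\to 0$.

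The substantive step is the nonlocal term. Writing
\[
\mathcal B(f,g):=\iint \mathcal K(x-y)f(x)g(y)\,dx\,dy,
\]
one has $\int\phi_u u^2=\mathcal B(u^2,u^2)$, and since $0\leq\mathcal K(x)\leq 1/|x|$, Hardy--Littlewood--Sobolev gives $|\mathcal B(f,g)|\leq C\|f\|_{6/5}\|g\|_{6/5}$. Expanding $(v_n+v)^2=v_n^2+2v_nv+v^2$, the difference $\mathcal B((v_n+v)^2,(v_n+v)^2)-\mathcal B(v_n^2,v_n^2)-\mathcal B(v^2,v^2)$ is a finite sum of cross terms of the form $\mathcal B(v_n^2,v^2)$, $\mathcal B(v_n^2,v_nv)$, $\mathcal B(v_nv,v^2)$ and $\mathcal B(v_nv,v_nv)$.

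The cross term $\mathcal B(v_n^2,v^2)=\int\phi_{v_n}v^2$ is the pairing of $\phi_{v_n}$ with $v^2\in L^{6/5}(\R^3)$; by item \eqref{propphii} of Lemma \ref{lem:propphi}, $\phi_{v_n}\rightharpoonup 0$ in $\mathcal D$, hence in $L^6(\R^3)$ via the continuous embedding $\mathcal D\hookrightarrow D^{1,2}(\R^3)\hookrightarrow L^6(\R^3)$, so this pairing tends to $0$. For the three terms containing the factor $v_nv$, I will first show $v_nv\to 0$ in $L^{6/5}(\R^3)$: a H\"older splitting $\int|v_nv|^{6/5}\leq(\int|v_n|^{12/5})^{1/2}(\int|v|^{12/5})^{1/2}$ combined with the tail estimate $\int_{|x|>R}|v|^{12/5}<\varepsilon$ and the local Rellich convergence $v_n\to 0$ in $L^{12/5}(B_R)$ yields the claim. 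Once $\|v_nv\|_{6/5}\to 0$, the HLS bound kills $\mathcal B(v_nv,v_nv)$, $\mathcal B(v_n^2,v_nv)$ and $\mathcal B(v_nv,v^2)$ since $\|v_n^2\|_{6/5}$ and $\|v^2\|_{6/5}$ are uniformly bounded.

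The main obstacle is this last nonlocal piece: the quartic structure produces several cross products, and the kernel $\mathcal K$ is not homogeneous, so one cannot simply invoke a dilation or a single Brezis--Lieb identity. The key observation that unlocks everything is the pointwise bound $\mathcal K\leq|\cdot|^{-1}$, which reduces all estimates to the standard Hardy--Littlewood--Sobolev inequality on the Coulomb kernel and lets the weak-to-strong gap be bridged by the local compactness in $L^{12/5}$.
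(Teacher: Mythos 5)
Your proof is correct, and while it follows the same skeleton as the paper's (reduce to the quartic nonlocal term via weak convergence of the quadratic parts and the classical Brezis--Lieb lemma for $\|\cdot\|_p^p$, then expand $\int\phi_{v_n+v}(v_n+v)^2$ into the four cross terms $\mathcal B(v_n^2,v^2)$, $\mathcal B(v_n^2,v_nv)$, $\mathcal B(v^2,v_nv)$, $\mathcal B(v_nv,v_nv)$), you dispose of those cross terms by a genuinely different mechanism. The paper picks the representative term $\int\phi_{v_n}v^2$ and kills it by a ball decomposition $\R^3=B_{R_1}\cup B_{R_1}^c$, $B_{R_2}\cup B_{R_2}^c$ with $n$-dependent radii, using the two pointwise bounds $\mathcal K\le 1/a$ (near the diagonal) and $\mathcal K\le|\cdot|^{-1}$ (off-diagonal) together with local compactness, and declares the other three terms ``completely analogous.'' You instead use only $\mathcal K\le|\cdot|^{-1}$ plus Hardy--Littlewood--Sobolev to get the bilinear bound $|\mathcal B(f,g)|\le C\|f\|_{6/5}\|g\|_{6/5}$, prove the single strong convergence $v_nv\to0$ in $L^{6/5}(\R^3)$ (by the tail-plus-Rellich splitting in $L^{12/5}$), which handles three of the four terms at once, and treat the remaining term $\int\phi_{v_n}v^2$ by the weak continuity $\phi_{v_n}\rightharpoonup0$ in $\mathcal D\hookrightarrow L^6(\R^3)$ from Lemma \ref{lem:propphi}\eqref{propphii} paired against $v^2\in L^{6/5}(\R^3)$. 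Your route is shorter and more uniform (it never uses the boundedness $\mathcal K\le1/a$, so the same argument covers the Schr\"odinger--Poisson kernel and is uniform in $a$), at the price of invoking HLS; the paper's route is more elementary, relying only on H\"older and the explicit decay of $\mathcal K$. One cosmetic remark: the displayed global H\"older inequality $\int|v_nv|^{6/5}\le(\int|v_n|^{12/5})^{1/2}(\int|v|^{12/5})^{1/2}$ by itself only gives boundedness; the actual proof of $\|v_nv\|_{6/5}\to0$ is the subsequent localized splitting you describe, so that sentence should be reorganized to apply H\"older separately on $B_R$ and $B_R^c$.
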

\begin{proof}
	By $\|v_n + v\|^2 = \|v_n\|^2 + \|v\|^2 + o_n(1)$ and  the Brezis-Lieb Lemma it is enough  to show that
	\[
	\int \phi _{v_{n}+v}(v_n + v)^2
	-\int \phi_{v_{n}}v_n^2
	-\int \phi_{v}v^2
	\to 0
	\quad
	\hbox{as }n\to +\infty.
	\]
	But
	\begin{align*}
	\int \phi _{v_{n}+v}(v_n + v)^2
	-\int \phi_{v_{n}}v_n^2
	-\int \phi_{v}v^2
	&=
	4\int  \phi_{v_{n}}v_n v
	+ 2\int \phi_{v_{n}}v^2
	+4\int (\mathcal{K}*v_n v)v_n v
	+4\int \phi_{v} v_n v,
	\end{align*}
	and each term in the right hand side above  converges to zero. Let us see the proof of the second one, being
	the proof of the other terms completely analogous.\\	
	For a subset $A\subset \mathbb R^{3}$ let us denote with $\mathbf 1_A$ its characteristic function.
	Let $B_1$ and $B_2$ be two spheres centered in $0$ with radius $R_1$ and $R_2$.
	We first write 
	%and $\chi_1:=\chi_{B_1},\chi_2:=\chi_{B_2}$. We have
	\[
	\int  \phi_{v_{n}}v^2
	=
	\int (\mathcal{K}* \mathbf 1_{B_{1}} v_n^2) v^2
	+\int (\mathcal{K}*\mathbf 1_{B_{1}^{c}}v_n^2)\mathbf 1_{B_{2}} v^2
	+\int (\mathcal{K}*\mathbf 1_{B_{1}^{c}} v_n^2)\mathbf 1_{B_{2}^{c}}v^2.
	%\to 0,
	\]
	%%% QUESTI SOTTO SONO GLI ALTRI TERMINI, CHE NON STIMIAMO PERCHE' E' SIMILE
	%	\[
	%	\int (\mathcal{K}*v_n^2)v_n v
	%	=
	%	\int (\mathcal{K}* \mathbf 1_{B_{1}} v_n^2) v_n v
	%         +\int (\mathcal{K}*\mathbf 1_{B_{1}^{c}} v_n^2)\mathbf 1_{B_{2}} v_n v
	%	+\int (\mathcal{K}*\mathbf 1_{B_{1}^{c}} v_n^2)\mathbf 1_{B_{2}^{c}} v_n v
	%	\to 0,
	%	\]
	%	\[
	%	\int (\mathcal{K}*v_n v)v_n v
	%	\to 0,
	%	\]
	%	\[
	%	\int (\mathcal{K}*v^2) v_n v
	%	\to 0.
	%	\]
	Then, since $\mathcal{K}\leq 1/a$, % the Hardy-Littlewood-Sobolev and the H\"older inequalities, 
	we easily get
	\[
	\int (\mathcal{K}* \mathbf 1_{B_{1}} v_n^2) v^2
	\leq
	\frac{1}{a}\|v_n\|_{L^{2}(B_1)}^2 \|v\|_{2}^2
	\to 0,
	\]
	\[
	\int (\mathcal{K}*\mathbf 1_{B_{1}^{c}}v_n^2)\mathbf 1_{B_{2}^{c}}v^2
	\leq
	\frac{1}{a}\|v_n\|_{2}^2 \|v\|_{L^{2}(B_2^c)}^2
	<\frac{1}{n},
	\]
%	{\color{red}\[
%	\int (\mathcal{K}* \mathbf 1_{B_{1}} v_n^2) v^2
%	\leq
%	C\|\mathbf 1_{B_{1}} v_n^2\|_{6/5} \| v^2\|_{6/5}
%	=
%	C\|v_n\|_{L^{12/5}(B_1)}^2 \|v\|_{12/5}^2
%	\to 0,
%	\]
%	\[
%	\int (\mathcal{K}*\mathbf 1_{B_{1}^{c}}v_n^2)\mathbf 1_{B_{2}^{c}}v^2
%	\leq
%	C\|\mathbf 1_{B_{1}^{c}}v_n^2\|_{6/5} \|\mathbf 1_{B_{2}^{c}}v^2\|_{6/5}
%	\leq
%	C\|v_n\|_{12/5}^2 \|v\|_{L^{12/5}(B_2^c)}^2
%	<\frac{1}{n}
%	\]}
	if $R_2=R_2(n)$ is taken sufficiently large, and,  using that $\mathcal{K}\leq |x|^{-1}$,
	\[
	\int (\mathcal{K}*\mathbf 1_{B_{1}^{c}}v_n^2)\mathbf 1_{B_{2}} v^2
	\leq
	\iint_{B_2 \times B_1^c} \frac{v_n^2(y) v^2(x)}{|x-y|}\,dxdy
	\leq \frac{\|v\|_2^2 \|v_n\|_2^2}{|R_1 - R_2|}
	<\frac{1}{n}
	\]
	taking $R_1=R_1(n)$ sufficiently large. 
	%	\begin{align*}
	%	\left|
	%	\int (\mathcal{K}*\mathbf 1_{B_{1}} v_n^2) v_n v
	%	\right|
	%	&\leq
	%	\int (\mathcal{K}*\mathbf 1_{B_{1}} v_n^2) |v_n v|
	%	\leq
	%	C\|\mathbf 1_{B_{1}} v_n^2\|_{6/5} \| v_n v\|_{6/5}\\
	%	&\leq
	%	C\|u_n\|_{L^{12/5}(B_1)}^2 \|v_n\|_{12/5}\|v\|_{12/5}
	%	\to 0,
	%	\end{align*}
	%la disuguaglianza che segue ? da completare
	%\[
	%\left|
	%\int (\mathcal{K}*(1-\chi_1) u_n^2)\chi_2 u_n u_0
	%\right|
	%\leq
	%\int (\mathcal{K}*(1-\chi_1) u_n^2)\chi_2 |u_n u_0|
	%\leq
	%C \|\chi_1 u_n^2\|_{6/5} \| u_n u_0\|_{6/5}\\
	%\leq
	%\|u_n\|_{L^{12/5}(B_1)}^2 \|u_n\|_{12/5}\|u_0\|_{12/5}
	%\to 0,
	%\]
	%+
	%+\int (\mathcal{K}*(1-\chi_1) u_n^2)(1-\chi_2) u_n u_0	
\end{proof}

Let us recall the Lions Lemma
\begin{Lem}\label{lem:Lions}
	Let $2<r<6$. There exists a constant $C>0$ such that
	$$\forall u\in H^{1}(\mathbb R^{3}): \|u\|_{r}\leq \left( \sup_{z\in\mathbb Z^{3}}\|u\|_{L^{2}(z+Q)}\right)^{(r-2)/r} \|u\|^{2/r}$$
	where $Q=[0,1]^{3}$.
\end{Lem}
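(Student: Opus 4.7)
The approach is to decompose $\mathbb{R}^3$ into the tiling by unit cubes $Q_z := z + Q$, $z\in\mathbb{Z}^3$, and combine a local Sobolev/interpolation estimate on each cube with a discrete H\"older summation in $z$. The two cornerstones are the Sobolev embedding $H^1(Q_z)\hookrightarrow L^6(Q_z)$ (whose constant is independent of $z$ by translation invariance of the unit cube) and the orthogonality-type identities
$$\sum_{z\in\mathbb{Z}^3}\|u\|_{L^2(Q_z)}^2 = \|u\|_{2}^2,\qquad \sum_{z\in\mathbb{Z}^3}\|u\|_{H^1(Q_z)}^2 \sim \|u\|^2,$$
which turn discrete $\ell^2$-sums of cube-norms into the corresponding global norms.

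First, on each cube I would interpolate $L^r$ between $L^2$ and $L^6$ and apply Sobolev, obtaining the cube-wise bound
$$\|u\|_{L^r(Q_z)}^{r}\leq C\,\|u\|_{L^2(Q_z)}^{(6-r)/2}\,\|u\|_{H^1(Q_z)}^{3(r-2)/2},$$
with $C$ independent of $z$. Summing over $z$ bounds $\|u\|_r^r$ by a single discrete sum of products. Then I would split out $\mu:=\sup_z\|u\|_{L^2(Q_z)}$ by writing $\|u\|_{L^2(Q_z)}^{(6-r)/2}=\mu^{(6-r)/2-\sigma}\|u\|_{L^2(Q_z)}^{\sigma}$ and applying H\"older on the $z$-sum with conjugate exponents $p,p'$ chosen so that the two remaining sums are precisely $\sum_z\|u\|_{L^2(Q_z)}^2$ and $\sum_z\|u\|_{H^1(Q_z)}^2$. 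The identities above, combined with the trivial bound $\|u\|_2\leq\|u\|$, then collapse the right-hand side to $C\mu^{r-2}\|u\|^2$, and an $r$-th root gives the claim.

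The main obstacle I expect is the H\"older-exponent bookkeeping in the last step: the simultaneous constraints $\sigma p = 2$, $\tfrac{3(r-2)}{2}\,p'=2$, and $\tfrac{1}{p}+\tfrac{1}{p'}=1$ force the values $\sigma=(10-3r)/2$, $p=4/(10-3r)$, $p'=4/(3(r-2))$, and one must check that these choices are admissible (i.e.\ $\sigma\in[0,(6-r)/2]$ and $p,p'\geq 1$) and that after collecting powers the exponent of $\mu$ is exactly $r-2$ and the exponent of $\|u\|$ is exactly $2$. The other small points — uniform-in-$z$ Sobolev constants on $Q_z$ and the norm-equivalence $\sum_z\|u\|_{H^1(Q_z)}^2\sim\|u\|^2$ — follow immediately from translation invariance and the definition of $\|\cdot\|$.
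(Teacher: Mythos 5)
The paper does not actually prove this lemma---it is only \emph{recalled} as a known result---so there is no in-paper argument to compare yours against; what matters is whether your argument closes, and it does so only on part of the stated range. Your plan (tiling by unit cubes, interpolation plus Sobolev on each cube, discrete H\"older in $z$) is the standard route, and the cube-wise bound and the identities $\sum_z\|u\|_{L^2(Q_z)}^2=\|u\|_2^2$, $\sum_z\|u\|_{H^1(Q_z)}^2\sim\|u\|^2$ are all fine. The problem is precisely the admissibility check you defer to the end: your forced values are $\sigma=(10-3r)/2$, $p=4/(10-3r)$, $p'=4/(3(r-2))$, and for $r>10/3$ these give $\sigma<0$ and $p'<1$, so the H\"older step is unavailable. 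This is not a bookkeeping accident that a smarter split would fix: with the exponents $(r-2)/r$ and $2/r$ the inequality is actually \emph{false} for $r\in(10/3,6)$. Test it on a concentrating bump $u_\varepsilon(x)=\varepsilon^{1/2}\psi(x/\varepsilon)$ with $\psi\in C_c^\infty(B(0,1/4))$: then $\|u_\varepsilon\|_r\sim\varepsilon^{1/2+3/r}$, $\sup_z\|u_\varepsilon\|_{L^2(z+Q)}\sim\varepsilon^2$, $\|u_\varepsilon\|\sim\varepsilon$, so the right-hand side is of order $\varepsilon^{2-2/r}$, and $\tfrac12+\tfrac3r\geq 2-\tfrac2r$ holds exactly when $r\leq 10/3$; for $r>10/3$ the ratio of the two sides blows up as $\varepsilon\to0$.

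What your computation does prove is the stated inequality for $2<r\leq 10/3$ (at $r=10/3$ no H\"older is needed: $\sigma=0$ and $3(r-2)/2=2$, so the $H^1$-powers sum directly). For $r\in(10/3,6)$ the correct exponents are the Gagliardo--Nirenberg ones, $\|u\|_r\leq C\,\mu^{(6-r)/(2r)}\|u\|^{3(r-2)/(2r)}$ with $\mu=\sup_{z}\|u\|_{L^2(z+Q)}$: starting from your cube-wise bound, write $\|u\|_{H^1(Q_z)}^{3(r-2)/2}\leq \|u\|_{H^1}^{3(r-2)/2-2}\,\|u\|_{H^1(Q_z)}^{2}$ (legitimate since $3(r-2)/2\geq 2$ there) and sum; the concentrating bump above saturates these exponents, so they cannot be improved. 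Alternatively, prove the case $r=10/3$ by your method and interpolate with $\|u\|_6\leq C\|u\|$ for larger $r$. Either corrected form keeps a positive power of $\mu$, which is all the paper ever uses (in Lemma \ref{lem:generale} only the implication that $\sup_z\|v_n\|_{L^2(z+Q)}\to0$ along a bounded sequence forces $\|v_n\|_r\to0$ is needed), so the applications are unaffected; but as a standalone statement you should either restrict to $r\leq 10/3$ or adjust the exponents, and your write-up must acknowledge that the H\"older split you propose does not exist beyond $r=10/3$.
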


As a consequence of this lemma we infer
\begin{Lem}\label{lem:generale}
	Let $\{v_{n}\}\subset H^{1}(\mathbb R^{3})$ be a sequence such that  $v_{n}\rightharpoonup 0$ in $H^{1}(\mathbb R^{3})$. Then $\mathcal J_{q}'(v_{n})\to 0$.\\
	If, in addition, $v_{n}\not\to0$ in $H^{1}(\mathbb R^{3})$,
	then, up to subsequences,
	\begin{equation*}%\label{eq:generalLions}
	\exists \{z_{n}\}\subset\mathbb Z^{3} \text{ with } |z_{n}|\to +\infty  \hbox{ such that } \lim_{n}\|v_{n}\|_{L^{p}(z_{n}+Q)}>0.
	\end{equation*}
\end{Lem}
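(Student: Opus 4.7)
\emph{Plan.} I read the statement as two claims: first, the weak vanishing of $\mathcal{J}_q'(v_n)$ in the dual of $H^1$, and second, under the extra assumption $v_n \not\to 0$, the existence of a concentration sequence $\{z_n\} \subset \mathbb{Z}^3$ drifting to infinity. I would prove each in turn, the second being the more substantive.

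For the first part, I would fix $\varphi \in C_c^\infty(\mathbb{R}^3)$ and decompose
\[
\mathcal{J}_q'(v_n)[\varphi] = \int \nabla v_n \nabla \varphi + \omega \int v_n \varphi + q^2 \int \phi_{v_n} v_n \varphi - \int |v_n|^{p-2} v_n \varphi.
\]
The first two summands vanish directly from $v_n \rightharpoonup 0$ in $H^1$. The fourth vanishes because Rellich's theorem gives $v_n \to 0$ in $L^s(\operatorname{supp}\varphi)$ for every $s \in [1,6)$. The nonlocal term vanishes by the argument already performed in Lemma \ref{lem:wlimit}: H\"older on $\operatorname{supp}\varphi$ combined with the uniform bound $\|\phi_{v_n}\|_6 \leq C\|v_n\|^2$ from Lemma \ref{lem:propphi}\,(\ref{propphivii}) and the strong local convergence of $v_n$. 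Since $\{v_n\}$ is bounded in $H^1$ and $C_c^\infty$ is dense in $H^1$, this upgrades to weak convergence of $\mathcal{J}_q'(v_n)$ in $H^{-1}$.

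For the second part, I would test the derivative against $v_n$ itself to get
\[
\|v_n\|^2 + q^2 \int \phi_{v_n} v_n^2 - \|v_n\|_p^p = \mathcal{J}_q'(v_n)[v_n] = o_n(1),
\]
and then invoke $\phi_{v_n} \geq 0$ (Lemma \ref{lem:propphi}\,(\ref{propphiiii})) to conclude $\|v_n\|^2 \leq \|v_n\|_p^p + o_n(1)$. If $v_n \not\to 0$ in $H^1$, along a subsequence $\|v_n\| \geq \delta > 0$, so $\|v_n\|_p \geq \delta_1 > 0$ for large $n$. Lemma \ref{lem:Lions} then forces $\sup_{z \in \mathbb{Z}^3} \|v_n\|_{L^2(z+Q)} \geq \eta > 0$, and I would pick $z_n$ realizing this bound up to a factor. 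Rellich gives $v_n \to 0$ in $L^2_{\mathrm{loc}}$, so a bounded $\{z_n\}$ would contradict the lower bound; hence $|z_n| \to +\infty$ along a further subsequence. A last H\"older step $\|v_n\|_{L^p(z_n + Q)} \geq C\|v_n\|_{L^2(z_n + Q)}$ delivers the $L^p$ conclusion.

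The main obstacle lies in justifying $\mathcal{J}_q'(v_n)[v_n] = o_n(1)$, since the weak $H^{-1}$ convergence from the first part does not license testing against the moving $v_n$. In the Splitting Lemma setting where this result is used, $\{v_n\}$ arises as the residue of a bona fide (PS) sequence after subtracting off the weak limit and previously extracted bubbles; the Brezis--Lieb-type identity of Lemma \ref{lem:BLforJ}, together with its parallel for the derivative, guarantees that $\mathcal{J}_q'(v_n) \to 0$ holds in the $H^{-1}$ norm. That strong convergence then yields $|\mathcal{J}_q'(v_n)[v_n]| \leq \|\mathcal{J}_q'(v_n)\|_{H^{-1}} \|v_n\| \to 0$ and the argument closes.
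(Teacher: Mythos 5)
Your argument follows the paper's own proof almost step for step: the same decomposition of $\mathcal{J}_q'(v_n)[\varphi]$ for the first claim (the paper bounds the nonlocal term via the pointwise estimate $\phi_{v_n}\le a^{-1}\|v_n\|_2^2$ rather than via $\|\phi_{v_n}\|_6$, an immaterial difference), and for the second claim the same chain: test the derivative against $v_n$, drop the nonnegative nonlocal term, deduce $\|v_n\|_p\ge\delta_1>0$, invoke Lemma \ref{lem:Lions} to produce $\{z_n\}$, and use $v_n\to0$ in $L^2_{\rm loc}(\R^3)$ to force $|z_n|\to+\infty$. As far as the route is concerned there is essentially nothing to compare.

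The obstacle you single out is genuine, and it is worth stressing that the paper's own proof does not resolve it either: the displayed contradiction $0<\alpha\le\lim_n\mathcal{J}_q'(v_n)[v_n]+\liminf_n\|v_n\|_p^p=0$ silently uses $\mathcal{J}_q'(v_n)[v_n]\to0$, which does not follow from $v_n\rightharpoonup0$ alone. The first part of the lemma only yields $\mathcal{J}_q'(v_n)[\varphi]\to0$ for each \emph{fixed} test function, and convergence in the $H^{-1}(\R^3)$ norm is false in general (take $v_n=v(\cdot-z_n)$ with $v$ not a critical point). In fact the second conclusion itself fails without some (PS)-type hypothesis: for the concentrating sequence $v_n=n^{1/2}\eta(n\,\cdot)$ with $\eta\in C_c^\infty(\R^3)\setminus\{0\}$ one has $v_n\rightharpoonup0$ and $v_n\not\to0$ in $H^1(\R^3)$, yet $\|v_n\|_p^p=n^{p/2-3}\|\eta\|_p^p\to0$ for $p<6$, so no sequence $\{z_n\}$ as in the statement can exist. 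Your repair --- that in the only place where the second assertion is invoked, $\{v_n\}$ is the residue of a bona fide (PS) sequence for which $\mathcal{J}_q'(v_n)\to0$ in the $H^{-1}(\R^3)$ norm, whence $|\mathcal{J}_q'(v_n)[v_n]|\le\|\mathcal{J}_q'(v_n)\|_{H^{-1}}\|v_n\|\to0$ --- is the right one, but note that the ``parallel for the derivative'' of Lemma \ref{lem:BLforJ} which you appeal to is nowhere proved in the paper. To make the argument airtight one should either prove that splitting identity for $\mathcal{J}_q'$ or restate the lemma with the additional hypothesis $\mathcal{J}_q'(v_n)[v_n]\to0$; with that hypothesis added, your proof (and the paper's) closes.
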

\begin{proof}
	Let $\varphi\in C^{\infty}_{c}(\mathbb R^{3})$ and $\Omega:=\textrm{supp } \varphi$.
	We have
	$$\mathcal J_{q}'(v_{n})[\varphi] = \langle v_{n}, \varphi \rangle + \int_{\Omega} \phi_{v_{n}} v_{n}\varphi
	-\int_{\Omega} |v_{n}|^{p-2}v_{n}\varphi.$$
	Then the first part  follows by observing that
	\begin{align*}
	&\left |  \int_{\Omega} \phi_{v_{n}}  v_{n}\varphi  \right|
	\leq  \frac{1}{a}\|v_{n} \|_{2}^{2} \|v_{n}\|_{L^{2}(\Omega)} \|\varphi \|_{L^{2}(\Omega)} = o_{n}(1), \\
	&\left | \int_{\Omega} |v_{n}|^{p-2} v_{n} \varphi  \right| \leq \| v_{n}\|^{p-1}_{L^{p'}(\Omega)} 
	\|\varphi \|_{L^{p}(\Omega)} =o_{n}(1),
	\end{align*}
%	\begin{align*}
%	&\left |  \int_{\Omega} \phi_{v_{n}}  v_{n}\varphi  \right|
%	\leq  \int_{\Omega} \left( \frac{1}{|\cdot|} * v_{n}^{2} \right)  | v_{n}\varphi|
%	\leq \|v_{n} \|_{2}^{2} \|v_{n}\|_{L^{3}(\Omega)} \|\varphi \|_{L^{3}(\Omega)} = o_{n}(1), \\
%	&\left | \int_{\Omega} |v_{n}|^{p-2} v_{n} \varphi  \right| \leq \| v_{n}\|^{p-1}_{L^{p'}(\Omega)} 
%	\|\varphi \|_{L^{p}(\Omega)} =o_{n}(1),
%	\end{align*}
	uniformly in $\varphi$.
	We conclude by density.\\
	Assume now that $v_{n}\not\to0$. Then there exists a subsequence, that we rename again $v_{n}$,
	such that $\|v_{n}\|\to \alpha>0.$
	%$\liminf_{n} \sup_{z\in \mathbb  Z^{3}} |v_{n}|_{L^{p}(z+Q)}>0.$ Indeed if 
	If $\liminf_{n} \sup_{z\in \mathbb  Z^{3}} \|v_{n}\|_{L^{p}(z+Q)}=0$,
	the Lions Lemma \ref{lem:Lions} gives $\liminf_{n} \|v_{n}\|_{p} =0$ and then
	%since $o_{n}(1) = \mathcal J_{q}'(u_{n}^{(1)})[u_{n}^{(1)}]$, 
	we have
	%$$0\leftarrow \mathcal J_{q}'(v_{n})[v_{n}]\geq\| v_{n}\|^{2} - |v_{n}|_{p}^{p}$$
	$$0<\alpha= \liminf_{n}\|v_{n}\|^{2}\leq  \lim_{n} \mathcal J_{q}'(v_{n})[v_{n}]+
	\liminf_{n} \|v_{n}\|_{p}^{p} = 0,$$
	reaching a contradiction.
	Hence
	$\liminf_{n} \sup_{z\in \mathbb  Z^{3}} \|v_{n}\|_{L^{p}(z+Q)}>0$ and then
	there exists a sequence $\{z_{n}\}\subset \mathbb Z^{3}$ such that
	\begin{equation*}%\label{eq:positivo}
	\lim_{n}\| v_{n}\|_{L^{p}(z_{n} +Q)} >0.
	\end{equation*}
	The sequence $\{z_{n}\}$ has to be unbounded.
	Otherwise,
	if for some $R>0$ it is
	$z_{n}+Q\subset B_{R}$
	for all $n\in \mathbb N$, we have the contradiction
	$$0<\lim_{n}\|v_{n}\|_{L^{p}(z_{n}+Q)}\leq \lim_{n}\|v_{n}\|_{L^{p}(B_{R})}=0,$$
	concluding the proof.
\end{proof}
Finally we recall two basic facts.
\begin{Lem} \label{lem:ovvio}
	Let $\{y_{n}\}\subset \mathbb R^{3}$, $v\in H^{1}(\mathbb R^{3})$, $\{v_{n}\}\subset H^{1}(\mathbb R^{3})$ be bounded.
	\begin{enumerate}[label=(\roman*),ref=\roman*]
		\item \label{ovvio1} If $|y_{n}|\to +\infty$,
		then $v(\cdot+y_{n})\rightharpoonup 0$ in $H^{1}(\mathbb R^{3})$. 
		\item \label{ovvio2} If $\{y_{n}\}$ is bounded, then, up to a subsequence,
		$$v_{n}\not\rightharpoonup 0 \ \text{ in } \ H^{1}(\mathbb R^{3}) \ \Longrightarrow \ 
		v_{n}(\cdot + y_{n})\not\rightharpoonup 0 \ \text{ in } \ H^{1}(\mathbb R^{3}).$$
	\end{enumerate}
\end{Lem}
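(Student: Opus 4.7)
The plan is to prove both statements by reducing weak convergence in $H^1(\mathbb R^3)$ to testing against $C_c^\infty(\mathbb R^3)$, exploiting density and the isometric character of translations.

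For (\ref{ovvio1}), set $w_n := v(\cdot+y_n)$. Since translations are isometries on $H^1(\mathbb R^3)$, we have $\|w_n\|=\|v\|$, so $\{w_n\}$ is bounded and, up to a subsequence, weakly converges to some $w\in H^1(\mathbb R^3)$. To identify $w=0$, I would fix $\varphi\in C^{\infty}_{c}(\mathbb R^3)$ and write
\[
\int w_n \varphi = \int v(z)\,\varphi(z-y_n)\,dz,\qquad \int \nabla w_n \nabla \varphi = \int \nabla v(z)\cdot \nabla\varphi(z-y_n)\,dz.
\]
Since $\varphi(\cdot-y_n)$ is supported in $y_n+\operatorname{supp}\varphi$, which escapes every fixed ball as $|y_n|\to+\infty$, Cauchy--Schwarz gives
\[
\Big|\int w_n \varphi\Big| \le \|v\|_{L^2(y_n+\operatorname{supp}\varphi)}\,\|\varphi\|_2 \to 0,
\]
and analogously for the gradient term, using that $\nabla v\in L^2(\mathbb R^3)$ also has vanishing tails. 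Thus $\la w, \varphi\ra =0$ for every $\varphi\in C^\infty_c(\mathbb R^3)$, whence $w=0$. Since the weak limit is unique, the full sequence (not only the subsequence) converges weakly to $0$.

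For (\ref{ovvio2}), the boundedness of $\{y_n\}$ allows me to extract a subsequence with $y_n\to y_0\in\mathbb R^3$. The assumption $v_n\not\rightharpoonup 0$ together with boundedness of $\{v_n\}$ in $H^1(\mathbb R^3)$ lets me extract a further subsequence with $v_n\rightharpoonup v\neq 0$. The claim is that $v_n(\cdot+y_n)\rightharpoonup v(\cdot+y_0)$ in $H^1(\mathbb R^3)$. For $\varphi\in C_c^\infty(\mathbb R^3)$, I split
\[
\int v_n(x+y_n)\varphi(x)\,dx = \int v_n(z)\big[\varphi(z-y_n)-\varphi(z-y_0)\big]\,dz + \int v_n(z)\,\varphi(z-y_0)\,dz.
\]
The first term is controlled by $\|v_n\|_2\,\|\varphi(\cdot-y_n)-\varphi(\cdot-y_0)\|_2$, which vanishes by the continuity of translations in $L^2(\mathbb R^3)$ (applied to the fixed function $\varphi$) and boundedness of $\{v_n\}$; the second term tends to $\int v(z)\varphi(z-y_0)\,dz = \int v(x+y_0)\varphi(x)\,dx$ by weak convergence. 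The same argument, applied to $\nabla\varphi$ in place of $\varphi$, handles the gradient inner product. Hence $v_n(\cdot+y_n)\rightharpoonup v(\cdot+y_0)$, and $v(\cdot+y_0)\neq 0$ since $v\neq 0$.

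The steps are essentially routine once the right test-function reduction is made; the only mild subtlety is in (\ref{ovvio2}), where one must treat the ``moving'' translations $\varphi(\cdot-y_n)$ by approximating them with the fixed translate $\varphi(\cdot-y_0)$ and exploiting the strong continuity of translations on $L^2(\mathbb R^3)$. No deeper obstruction is expected.
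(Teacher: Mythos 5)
Your proposal is correct and follows essentially the same route as the paper: part (\ref{ovvio1}) is handled by reducing to compactly supported test functions (you identify the weak limit of a subsequence, the paper approximates an arbitrary $w\in H^1(\R^3)$ by $\varphi_\varepsilon\in C_c^\infty(\R^3)$ and estimates $\limsup_n|\langle v(\cdot+y_n),w\rangle|$ directly, but both hinge on the vanishing $L^2$-tails of $v$ and $\nabla v$), and part (\ref{ovvio2}) in both cases rests on passing to $y_n\to y_0$ and using the strong continuity of translations in $L^2$ applied to a fixed test function. The only cosmetic difference is that the paper exhibits a single test function witnessing $\langle v_n(\cdot+y_n),\varphi(\cdot+y_0)\rangle\to\eta\neq0$, whereas you identify the full weak limit $v(\cdot+y_0)$; both are valid.
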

\begin{proof}
	For the first part, if $w\in H^{1}(\mathbb R^{3})$ and $\varepsilon>0$, then there exists $\varphi_{\varepsilon}\in C^{\infty}_{c}(\mathbb R^{3})$ such that $\|w-\varphi_{\varepsilon}\|\leq \varepsilon$. Consequently
	\[
	|\langle v(\cdot+y_{n}), w \rangle|
	\leq
	|\langle v(\cdot+y_{n}), w-\varphi_{\varepsilon}\rangle| +| \langle v(\cdot+y_{n}), \varphi_{\varepsilon}\rangle|
	\leq
	\varepsilon \| v\|  + o_{n}(1)
	\]
	proving that $\limsup_{n} |\langle v(\cdot +y_{n}), w\rangle |\leq \varepsilon\|v\|$.  \\
	To show the second part, let $\varphi\in C^{\infty}_{c}(\mathbb R^{3})$ and $y\in \mathbb R^3$ be such that 
	$\langle v_{n}, \varphi\rangle \to \eta \neq 0$ and $y_{n}\to y$. We have
	\[
		\langle v_{n}(\cdot+y_{n}), \varphi(\cdot+y) \rangle
		= \langle v_{n}, \varphi(\cdot +y-y_{n})\rangle
		=\langle v_{n}, \varphi \rangle + \langle v_{n}, \varphi(\cdot +y-y_{n})-\varphi\rangle.
		\]
		Moreover, by the Lebesgue Theorem,
	\[
	|\langle v_{n}, \varphi(\cdot +y-y_{n})-\varphi\rangle|
	\leq C \|\varphi(\cdot -y_{n}) - \varphi(\cdot-y)\|
	%&=&\int |  \nabla \varphi(\cdot +y-y_{n}) -\nabla \varphi|^{2} +C \int |\varphi(\cdot +y-y_{n}) - \varphi|^{2}\\
	=C \|\varphi(\cdot -y_{n}) - \varphi(\cdot-y)\|_{H^1(K)}
	=o_n(1)
	\]
	where $K\subset\R^3$ is a suitable compact set,	completing the proof.
%	since
%	\[
%	\nabla \varphi(\cdot -y_{n}) \to \nabla \varphi(\cdot-y) \hbox{ a.e. in } K,
%	\quad
%	|\nabla \varphi(\cdot -y_{n}) -\nabla \varphi(\cdot-y)|^2\leq 4 \|\nabla \varphi\|_{\infty}^2\in L^{1}(K).
%	\]
\end{proof}

%\begin{Rem}\label{rem:wl0}
%Let us observe that if $v_{n} \rightharpoonup 0$ in $H^{1}(\mathbb R^{3})$, then $\mathcal J_{q}' (v_{n})\to 0$.
%Indeed, for $\varphi\in C^{\infty}_{c}(\mathbb R^{3})$, setting $\Omega=\textrm{supp } \varphi$
%we have
%$$\mathcal J_{q}'(v_{n})[\varphi] = \langle v_{n}, \varphi \rangle + \int_{\Omega} (\mathcal K * v_{n}^{2}) v_{n}\varphi
%-\int_{\Omega} |v_{n}|^{p-2}v_{n}\varphi.$$
%Then the conclusion follows by observing that
%\begin{eqnarray*}
%&&\left |  \int_{\Omega} ( \mathcal K* v_{n}^{2} )  v_{n}\varphi  \right|
%\leq  \int_{\Omega} \left( \frac{1}{|\cdot|} * v_{n}^{2} \right)  | v_{n}\varphi|
%\leq \left |v_{n} \right |_{2}^{2} \left|v_{n}\right|_{L^{3}(\Omega)} \left |\varphi \right|_{L^{3}(\Omega)} = o_{n}(1), \\
% &&\left | \int_{\Omega} |v_{n}|^{p-2} v_{n} \varphi  \right| \leq | v_{n}  |_{L^{p}(\Omega)} 
% | \varphi |_{L^{p'}(\Omega)} =o_{n}(1).
%\end{eqnarray*}
%\end{Rem}
%

Now we are able to give the proof of Lemma \ref{lemSplitting}.

\begin{proof}[Proof of Lemma \ref{lemSplitting}]
	%We know that  $\mathcal J_{q}(u_{n}) \to c$ and,
	By Lemma \ref{lem:wlimit} we know that
	$\mathcal J'_{q}(u_{0}) = 0$.
	Let us divide the proof in various steps.\\
	{\bf STEP	1:} We have two possibilities.\\
	{\bf Case 1a:}
	If $u_{n}\to u_{0}$ in $H^{1}(\mathbb R^{3})$, then the first alternative in the Lemma follows
	and the proof is concluded.\\
	{\bf Case 1b:}
	If $u_{n}\not\to u_{0}$ in $H^{1}(\mathbb R^{3})$, then we set
	$u_{n}^{(1)} := u_{n} - u_{0}$, which  satisfies, in view of 
	%Remark\ref{rem:wl0} 
	Lemma \ref{lem:generale} and  Lemma \ref{lem:BLforJ}, we have:
	\begin{enumerate}[label=(1b\roman*),ref=1b\roman*]	
		\item \label{b1i}$u_{n}^{(1)}\rightharpoonup 0$ in $H^{1}(\mathbb R^{3})$,
		\item \label{b1ii}$\mathcal J'_{q}(u^{(1)}_{n})\to 0$,
		\item \label{b1iii}$\mathcal J_{q}(u_{n}^{(1)})  \to d-\mathcal J_{q}(u_{0})$.
	\end{enumerate}
	Moreover, again by Lemma \ref{lem:generale}, we have that
	\begin{equation}\label{eq:zn1}
	\exists \{z_{n}^{(1)}\}\subset \mathbb Z^{3}\text{ with } |z_{n}^{(1)}|\to +\infty
	\hbox{ such that }
	\lim_{n}\|u_{n}^{(1)}\|_{L^{p}(z_{n}^{(1)}+Q)}>0.
	\end{equation}
	%Now if  $\liminf_{n} \sup_{z\in \mathbb  Z^{3}} |u_{n}^{(1)}|_{L^{p}(z+Q)}=0$,
	%the Lions Lemma \ref{lem:Lions} gives $\liminf_{n} |u_{n}^{(1)}|_{p} =0$ and then
	%we have
	%$$\liminf_{n}\|u_{n}^{(1)}\|^{2}\leq \liminf_{n} \mathcal J_{q}'(u_{n}^{(1)})[u_{n}^{(1)}]+
	%\liminf_{n} |u_{n}^{(1)}|_{p}^{p} = 0.$$
	%This contrasts with the fact that we assumed $u_{n}^{(1)}\not\to  0$; hence
	%$\liminf_{n} \sup_{z\in \mathbb  Z^{3}} |u_{n}^{(1)}|_{L^{p}(z+Q)}>0.$
	%But then there exists a sequence $\{z_{n}^{(1)}\}\subset \mathbb Z^{3}$ such that
	%\begin{equation}\label{eq:positivo}
	%\lim_{n}| u_{n}^{(1)}|_{L^{p}(z_{n}^{(1)} +Q)} >0.
	%\end{equation}
	%Now we show that:
	%\begin{itemize}
	%\item[(a)] $|z_{n}^{(1)}| \to+\infty$,
	%\item[(b)] setting $\widetilde u_{n}^{(1)}:= u_{n}^{(1)} (\cdot -z_{n}^{(1)})$ it holds
	%$$ \{\widetilde u_{n}^{(1)}\} \ \text{ is bounded in $H^{1}(\mathbb R^{3})$}, \quad 
	%\mathcal J_{q}'(\widetilde u_{n}^{(1)})\to 0, \qquad \widetilde u_{n}^{(1)}\not\rightharpoonup 0 \text{ in }H^{1}(\mathbb R^{3}).$$
	%\end{itemize}
	Setting $\widetilde u_{n}^{(1)}:= u_{n}^{(1)} (\cdot -z_{n}^{(1)})$, we easily get from (\ref{b1i})--(\ref{b1iii}) and \eqref{eq:zn1} that
	$$ \{\widetilde u_{n}^{(1)}\} \ \text{ is bounded in $H^{1}(\mathbb R^{3})$}, \quad 
	\mathcal J_{q}'(\widetilde u_{n}^{(1)})\to 0, \qquad \widetilde u_{n}^{(1)}\not\rightharpoonup 0 \text{ in }H^{1}(\mathbb R^{3}).$$
	%if $\{z_{n}^{(1)}\}$ were bounded, then for some $R>0$ it is $z_{n}^{(1)}+Q\subset B_{R}$
	%for all $n\in \mathbb N$. Then, by 1. above
	%$$\lim_{n}|u_{n}^{(1)}|_{L^{p}(z_{n}^{(1)}+Q)}\leq \lim_{n}|u_{n}^{(1)}|_{L^{p}(B_{R})}=0$$
	%which contradicts \eqref{eq:positivo} and proves $(a)$. 
	%Item $(b)$ 
	Then 
	\begin{equation*}%\label{eq:convw1}
	\widetilde u_{n}^{(1)} \rightharpoonup w_{1}\neq 0 \text{ in } H^{1}(\mathbb R^{3})
	\end{equation*}
	and, by the invariance under translations of the functional and (\ref{b1iii}) we have
	\begin{equation}\label{eq:convc}
	\mathcal J_{q}(\widetilde u_{n}^{(1)})=\mathcal J_{q}( u_{n}^{(1)})\to d - \mathcal J_{q}(u_{0}),
	\end{equation}
	so that $\{\widetilde u_{n}^{(1)}\}$ is a bounded (PS) sequence for $\mathcal J_{q}$.
	By Lemma \ref{lem:wlimit}, 
	\begin{equation*}%\label{eq:w1}
	\mathcal J_{q}'( w_{1})=0 \quad \text{ with } w_{1}\neq0.
	\end{equation*}
	{\bf STEP 2:} Now there are two possibilities.\\
	{\bf Case 2a:} If $\widetilde u_{n}^{(1)}\to w_{1}$ in $H^{1}(\mathbb R^{3})$, this means that 
	%\textcolor{red}{(differenza con Kikuci)}
	\begin{equation*}%\label{eq:convergenza}
	o_{n}(1) = \|u_{n}^{(1)} - w_{1}(\cdot + z_{n}^{(1)})\| = \| u_{n} - u_{0} - w_{1}(\cdot + z_{n}^{(1)})\|
	\end{equation*}
	and  then $\mathcal J_{q}(\widetilde u_{n}^{(1)}) =\mathcal J_{q}( u_{n}^{(1)}) \to \mathcal J_{q}(w_{1})$,
	which, taking into account \eqref{eq:convc} gives
	\begin{equation*}%\label{eq:vale1}
	d= \mathcal J_{q}(u_{0}) +\mathcal J_{q}(w_{1})
	\end{equation*}
	%In this case, in virtue of \eqref{eq:w1},\eqref{eq:convergenza} and \eqref{eq:vale1},
	and the Lemma is proved with $\ell=1$.\\
	{\bf Case 2b:}  If $\widetilde u_{n}^{(1)}\not\to w_{1}$, then let
	%we define $u_{n}^{(2)}:= \widetilde u_{n}^{(1)} - w_{1}\not\to0$.
	$u_{n}^{(2)}:=  u_{n}^{(1)} - w_{1}(\cdot + z_{n}^{(1)})\not\to0$.
	The sequence $\{u_{n}^{(2)}\}$ satisfies:
	\begin{enumerate}[label=(2b\roman*),ref=2b\roman*]	
		\item \label{2bi} $u_{n}^{(2)}\rightharpoonup 0$ in $H^{1}(\mathbb R^{3})$,
		%\textcolor{blue}{(Motivo: entrambi gli addendi di $u_{n}^{(2)}$ vanno debole a zero)}
		\item \label{2bii} $\mathcal J'_{q}(u^{(2)}_{n})\to 0$, % \textcolor{blue}{(Motivo: by Lemma  \ref{lem:generale} )},
		\item \label{2biii} $\mathcal J_{q}(u_{n}^{(2)}) \to d-\mathcal J_{q}(u_{0})-\mathcal J_{q}(w_{1})$, since, by Lemma \ref{lem:BLforJ},
		\begin{equation*}
		%\label{eq:J2}
		\mathcal J_{q}(u_{n}^{(2)})
%		&=
%		\mathcal J_{q}(u_{n}^{(1)}(\cdot - z_{n}^{(1)}) -w_{1}) \\
		=
		\mathcal J_{q}(\widetilde u_{n}^{(1)}-w_{1}) 
		%	&=& \mathcal J_{q}(u_{n}^{(1)}(\cdot -z_{n}^{(1)})) - \mathcal J_{q}(w_{1}) + o_{n}(1)\nonumber\\
		=
		\mathcal J_{q}(\widetilde u_{n}^{(1)} ) -\mathcal J_{q}(w_{1})+o_{n}(1) 
		=
		d-\mathcal J_{q}(u_{0})-\mathcal J_{q}(w_{1})+o_{n}(1).
		\end{equation*}
	\end{enumerate}
	%% QUESTO SE $u_{n}^{2} = \widetilde u_{n}^{1} - w_{1}$
	%\begin{eqnarray*}
	%\mathcal J_{q}(u_{n}^{(2)}) &=& \mathcal J_{q}(u_{n}^{(2)} +w_{1}) -\mathcal J_{q}(w_{1})+o_{n}(1)\\
	%&=&\mathcal J_{q}(\widetilde u_{n}^{(1)} ) -\mathcal J_{q}(w_{1})+o_{n}(1)\\
	%&=& c-\mathcal J_{q}(u_{0})-\mathcal J_{q}(w_{1})+o_{n}(1),
	%\end{eqnarray*}
	%where  \eqref{eq:convc} is used.
	Again we have also that
	\begin{equation}\label{eq:zn2}
	\exists \{z_{n}^{(2)}\}\subset \mathbb Z^{3}\text{ with } |z_{n}^{(2)}|\to +\infty
	\hbox{ such that }
	\lim_{n}\|u_{n}^{(2)}\|_{L^{p}(z_{n}^{(2)}+Q)}>0.
	\end{equation}
	Setting $\widetilde u_{n}^{(2)} := u_{n}^{(2)}(\cdot -z_{n}^{(2)})$, it holds as before that
	$$ \{\widetilde u_{n}^{(2)}\} \ \text{ is bounded in $H^{1}(\mathbb R^{3})$}, \quad 
	\mathcal J_{q}'(\widetilde u_{n}^{(2)})\to 0, \qquad \widetilde u_{n}^{(2)}\rightharpoonup w_{2}\neq0 \text{ in }H^{1}(\mathbb R^{3}).$$
	Then $\{\widetilde u_{n}^{(2)}\}$ is a bounded (PS) sequence for $\mathcal J_{q}$ and
	by Lemma \ref{lem:wlimit}, 
	\begin{equation}\label{eq:w2}
	\mathcal J_{q}'( w_{2})=0 \quad \text{ with } w_{2}\neq0.
	\end{equation}
	Moreover
	\begin{equation}\label{eq:12divergente}
	|z_{n}^{(1)} - z_{n}^{(2)}|\to +\infty.
	\end{equation}
	To see this, first observe that
	\begin{equation*}%\label{eq:zn12}
	\widetilde u_{n}^{(1)}-w_{1} = u_{n}^{(1)}(\cdot -z_{n}^{(1)})  - w_{1}= u_{n}^{(2)}(\cdot-z_{n}^{(1)})=
	\widetilde u_{n}^{(2)}(\cdot +z_{n}^{(2)} - z_{n}^{(1)} ).
	\end{equation*}
	Then if it were $|z_{n}^{(2)} - z_{n}^{(1)}|\leq R$, 
	%then, up to subsequence, $z_{n}^{(2)} - z_{n}^{(1)}\to\tilde{z}$. Moreover, 
	since $\widetilde u_{n}^{(2)}\not\rightharpoonup 0$, by Lemma \ref{lem:ovvio} item \eqref{ovvio2}, we deduce
	$$\widetilde u_{n}^{(2)}(\cdot +z_{n}^{(2)} - z_{n}^{(1)} )\not\rightharpoonup 0,$$ which is a contradiction.\\
	%%% ---- QUESTA E' LA PROVA IN QUESTO CASO PARTICOLARE----
	%	then there exists $\varphi\in C^{\infty}_{c}(\mathbb R^{3})$ such that $\langle \widetilde u_{n}^{(2)}, \varphi \rangle\not\to0$ and then also 
	%	$\langle \widetilde u_{n}^{(2)}(\cdot+z_{n}^{(2)} - z_{n}^{(1)}), \varphi (\cdot + \tilde{z})\rangle\not\to0$
	%	which contrasts with \eqref{eq:zn12}. In fact
	%	\begin{align*}
	%	\langle \widetilde u_{n}^{(2)}(\cdot+z_{n}^{(2)} - z_{n}^{(1)}), \varphi (\cdot + \tilde{z})\rangle_{H^{1}}
	%	&=
	%	\langle \widetilde u_{n}^{(2)}, \varphi (\cdot + \tilde{z} - z_{n}^{(2)} + z_{n}^{(1)})\rangle_{H^{1}}\\
	%	&=
	%	\langle \widetilde u_{n}^{(2)}, \varphi \rangle
	%	+ \langle \widetilde u_{n}^{(2)}, \varphi (\cdot + \tilde{z} - z_{n}^{(2)} + z_{n}^{(1)})-\varphi\rangle
	%	\end{align*}
	%	and
	%	\[
	%	|\langle \widetilde u_{n}^{(2)}, \varphi (\cdot + \tilde{z} - z_{n}^{(2)} + z_{n}^{(1)})-\varphi\rangle|
	%	\leq
	%	\|\widetilde u_{n}^{(2)}\| \| \varphi (\cdot + \tilde{z} - z_{n}^{(2)} + z_{n}^{(1)})-\varphi\|\to 0
	%	\]
	%	being $\{\widetilde u_{n}^{(2)}\}$ bounded in $H^1$ and, by Lebesgue Dominated Convergence Theorem, $\| \varphi (\cdot + \tilde{z} - z_{n}^{(2)} + z_{n}^{(1)})-\varphi\|\to 0$. 
	%	
	{\bf STEP3:} Again we have two possibilities.\\
	{\bf Case 3a:} If $\widetilde u_{n}^{(2)}\to w_{2}$ in $H^{1}(\mathbb R^{3})$ 
	this means that
	\begin{equation}\label{eq:convergenza2}
	\begin{split}
	o_{n}(1)
	&=
	\| \widetilde u_{n}^{(2)} - w_{2}\| \\
	&=
	\| u_{n}^{(2)}-w_{2}(\cdot +z_{n}^{(2)})\| \\ 
	&=
	\| u_{n}^{(1)}-w_{1}(\cdot +z_{n}^{(1)})-w_{2}(\cdot +z_{n}^{(2)})\|  \\ 
	&=
	\| u_{n} -u_{0}-w_{1}(\cdot +z_{n}^{(1)})-w_{2}(\cdot +z_{n}^{(2)})\| 
	\end{split}
	\end{equation}
	and  then, being $\mathcal J_{q}(\widetilde u_{n}^{(2)}) =\mathcal J_{q}( u_{n}^{(2)}) $
	%=\mathcal J_{q}( u_{n}^{(2)}) \to \mathcal J_{q}(w_{1})$,
	%which, taking into account \eqref{eq:J2} gives
	%\begin{equation}\label{eq:vale2}
	%\mathcal J_{q}(\widetilde u_{n}) \to \mathcal J_{q}(u_{0}) +\mathcal J_{q}(w_{1}).
	%\end{equation}
	the Lemma holds, in virtue of 
	\eqref{eq:zn2}--\eqref{eq:convergenza2}
	%,\eqref{eq:w2}, \eqref{eq:12divergente} and
	with $\ell=2$.\\
	{\bf Case 3b:} If $\widetilde u_{n}^{(2)}\not\to w_{2}$, we argue as before repeating the procedure.\\
	In this way we obtain at the generic\\
	{\bf STEPm} with the following alternatives:\\
{\bf Case ma:} $\widetilde u_{n}^{(m-1)}\to w_{m-1}$ in $H^{1}(\mathbb R^{3})$ and the Lemma holds with $\ell=m-1$.\\
{\bf Case mb:} We have
	\begin{itemize}
		\item sequences of points $\{z_{n}^{(i)}\}\subset \mathbb R^{3}$ for $i=1,\ldots, m$ with $|z_{n}^{(i)}|\to +\infty$ for all $i=1,  \ldots,m$ and $|z_{n}^{(i)} - z_{n}^{(j)}|\to +\infty$ for all $i,j=1,\ldots,m$ with $i\neq j $;
		\item  functions $w_{i}\neq 0$ with $\mathcal J'_{q}(w_{i}) = 0$ for all $i=1,\ldots, m$;
	\end{itemize}
	and in this case the procedure continues.\\
%	
%	such that one of the two cases happens:\\
%	{\bf Case (m+1)a:} $\widetilde u_{n}^{(m)}\to w_{m}$ and then,
%	$$\|u_{n} - u_{0} -\sum_{i=1}^{m} w_{i}(\cdot+z_{n}^{(i)})\| = o_{n}(1) \quad \text{and}
%	\quad d=\mathcal{J}_q(u_0)+ \sum_{k=1}^{m} \mathcal{J}_q(w_k)$$ 
%	concluding the proof;\\	
%	{\bf Case (m+1)b:} $\widetilde u_{n}^{(m)}\not\to w_{m}$.\\	
%	\textcolor{blue}{continuer da qui:}
However at some step ($\ell+1$) the first case has to occur stopping the process
	and proving the Lemma. That is
	there exists $\ell\in \mathbb N$
	%such that $ u_{n}^{(\ell)} \to 0$ in $H^{1}(\mathbb R^{3})$. there exists $\ell\in \mathbb N$ 
	such that $\widetilde u_{n}^{(\ell)}\to w_{\ell}$.
	To see this, we first observe that, for any $N\in \mathbb N$ we have that
	\begin{equation}\label{eq:normanorma}
	\Big\| u_{n} - u_{0} -\sum_{i=1}^{N}
	w_{i}(\cdot + z_{n}^{(i)})\Big\|^{2}=\|u_{n}\|^{2} - \|u_{0}\|^{2} -\sum_{i=1}^{N}\| w_{i}\|^{2} +o_{n}(1)
	\end{equation} 
	Indeed expanding the left hand side above we have
	\begin{equation}\label{eq:bo}
	\begin{split}
	\Big\| u_{n} - u_{0} -\sum_{i=1}^{N} w_{i}(\cdot + z_{n}^{(i)})\Big\|^{2}
	&=
	\|u_{n}\|^{2}+\|u_{0}\|^{2} +
	\Big\|\sum_{i=1}^{N}w_{i}(\cdot + z_{n}^{(i)})\Big\| ^{2}
	-2\langle u_{n}, u_{0}\rangle  \\
	&\quad
	-2\sum_{i=1}^{N}\langle u_{n}, w_{i}(\cdot + z_{n}^{(i)})\rangle 
	- 2\sum_{i=1}^{N}\langle u_{0},  w_{i}(\cdot + z_{n}^{(i)})\rangle.
	\end{split}
	\end{equation}
	Now, since $|z_{n}^{(i)} - z_{n}^{(j)}| \to +\infty$ for $i\neq j$, by (\ref{ovvio1}) in Lemma \ref{lem:ovvio},
	\begin{equation}\label{eq:contofinale1}
	\Big\|\sum_{i=1}^{N} w_{i}(\cdot + z_{n}^{(i)})\Big\|^{2}=
	\sum_{i=1}^{N} \Big\|w_{i}(\cdot + z_{n}^{(i)})\Big\| ^{2} +
	2 \sum_{i\neq j} \langle w_{i}(\cdot +z_{n}^{(i)}), w_{j}(\cdot +z_{n}^{(j)}) \rangle
	= \sum_{i=1}^{N} \|w_{i}\| ^{2} + o_{n}(1).
	\end{equation}
	Analogously, being $|z_{n}^{(i)}|\to+\infty$, 
	\begin{eqnarray}\label{eq:contofinale2}
	\langle u_{0}, w_{i}(\cdot + z_{n}^{(i)})\rangle =o_{n}(1).
	\end{eqnarray}
	Finally,
	\begin{equation}\label{eq:contofinale3}
	\begin{split}
	\langle u_{n}, w_{i}(\cdot+z_{n}^{(i)}) \rangle
	=&
	\langle u_{n} - u_{0} -\sum_{j=1}^{i-1}w_{j}(\cdot+z_{n}^{(j)}),
	w_{i}(\cdot +z_{n}^{(i)})\rangle 
	+\langle  u_{0} +\sum_{j=1}^{i-1}w_{j}(\cdot+z_{n}^{(j)}), w_{i}(\cdot +z_{n}^{(i)})\rangle \\
	&=
	\langle u_{n}^{(i)} , w_{i}(\cdot +z_{n}^{(i)}) \rangle 
	+\langle u_{0}, w_{i}(\cdot +z_{n}^{(i)})\rangle
	+\sum_{j=1}^{i-1}\langle w_{j}, w_{i}(\cdot+ z_{n}^{(i)} - z_{n}^{(j)})\rangle \\
	&=
	\langle \widetilde u_{n}^{(i)}, w_{i}\rangle +o_{n}(1) \\
	&=
	\|w_{i}\|^{2}+o_{n}(1).
	\end{split}	
	\end{equation}
	Then by plugging \eqref{eq:contofinale1}-\eqref{eq:contofinale3} into \eqref{eq:bo} we get
	\eqref{eq:normanorma}.\\
	In virtue of the fact that $w_i$ are nontrivial critical points of $\J_q$, we have
	\[
	\|w_i\|^2 \leq \|w_i\|^2 + \int \phi_{w_{i}} w_i^2 = \| w_i\|_p^p \leq C \|w_i\|^p
	\]
	showing that $\{w_{i}\}$ are bounded away from zero in $H^{1}(\mathbb R^{3})$.\\
	Then, by \eqref{eq:normanorma} we deduce that the process has to stop, completing the proof of the Lemma.
\end{proof}

%\textcolor{red}{We are ready to conclude.
%\subsubsection{Proof of the main Theorem}\label{sss:final}
%%\begin{proof}[Proof of the main Theorem]
%If there exists $w\in\{u_0,w_1,\ldots,w_\ell\}$ such that $\mathcal{J}_q(w)<0$ we have finished.\\
%Let us assume that $\mathcal{J}_q(u_0), \mathcal{J}_q(w_1),\ldots,\mathcal{J}_q(w_\ell)\geq 0$. By (\ref{splitt5}) in Lemma \ref{lemSplitting} we have that
%\[
%c_1= \mathcal{J}_q(u_0)+ \sum_{k=1}^\ell \mathcal{J}_q(w_k).
%\]
%Then we can conclude, since, in any case, there exists a critical point $w$ for $\mathcal{J}_q$ such that $\mathcal{J}_q(w)\leq c_1 \leq c_0$.
%%\end{proof}
%}

\end{document}